\documentclass[reqno]{amsart}
\usepackage[left=1.02in,top=1.0in,right=1.02in,bottom=1.0in]{geometry}
\usepackage{tikz}
\usetikzlibrary{arrows.meta}
\usepackage{mathrsfs}
\usepackage{amsmath}
\usepackage{amssymb}
\usepackage{mathtools}
\usepackage{amsthm}
\usepackage{setspace}
\usepackage{bm}
\usepackage[all]{xy}
\usepackage{enumerate}
\usepackage{enumitem}
\usepackage{indentfirst}

\usepackage{comment}
\usepackage{color}

\allowdisplaybreaks
\numberwithin{equation}{section}

\makeatletter
\renewcommand{\subsection}{%  
  \@startsection{subsection}% 
  {2}%
  {0pt}%
  {.8\baselineskip}%
  {.4\baselineskip}%
  {\normalfont\bfseries}%
}
\makeatother

\newcommand{\cF}{\mathcal{F}}
\newcommand{\cO}{\mathcal{O}}
\newcommand{\QQ}{\mathbb{Q}}
\newcommand{\fp}{\mathfrak{p}}
\newcommand{\fm}{\mathfrak{m}}
\newcommand{\pr}{\partial}
\newcommand{\dd}[1]{\frac{\pr}{\pr #1}}
\newcommand{\Der}[1]{\mathrm{Der}_k(#1)}
\newcommand{\Frac}{\mathrm{Frac}}
\newcommand{\Fh}{\phi^{(3)}}
\newcommand{\Fs}{\psi^{(3)}}
\newcommand{\epE}{\epsilon_{\cF}(E)}
\newcommand{\divides}{\,|\,}

\title{Quotients by $(p-1)/p$-klt Foliations on Surfaces}
\author{Yutaro Hiroi}
\date{}
\address{Department of Mathematics\\ Graduate School of Sciences\\ the University of Osaka, Osaka, Japan}
\email{u507936d@ecs.osaka-u.ac.jp}

\theoremstyle{plain}
\newtheorem{dfn}{Definition}[subsection]
\newtheorem{prop}[dfn]{Proposition}
\newtheorem{lem}[dfn]{Lemma}
\newtheorem{cor}[dfn]{Corollary}
\newtheorem{thm}[dfn]{Theorem}

\theoremstyle{definition}
\newtheorem{ex}[dfn]{Example}
\newtheorem{rem}[dfn]{Remark}

\begin{document}

\begin{abstract}
We study the relation between birational singularities of 1-foliations and those of their quotients. We prove that the quotient $X/\cF$ is log canonical (resp. klt) if and only if $\cF$ is $\frac{p-1}{p}$-log canonical (resp. $\frac{p-1}{p}$-klt). Moreover, we obtain the classification of klt quotients by 1-foliations on regular surfaces in the cases $p=2,3$ and $5$.
\end{abstract}

\maketitle

\section{Introduction}
\stepcounter{subsection}
On a normal variety $X$ over an algebraically closed field $k$ of characteristic $p>0$, a 1-foliation is a saturated subsheaf of the tangent sheaf $T_X$ that is closed under Lie brackets and $p$-th powers. The foliation $\cF$ on an affine open subset $U=\text{Spec}(A)$ of $X$ is a finitely generated submodule of $\Der{A}$, which determines the subring of constants $A^{\cF}$. By gluing affine schemes $\text{Spec}(A^{\cF})$, we can define the quotient $X/\cF$. 

An interesting aspect of foliations is that of birational singularities. We can define a few classes of singularities in a similar way as we do for singularities of varieties that are considered in birational geometry. Specifically, we define the discrepancy $a(E;\cF)$ and the notions of klt and log canonical (lc) singularities for foliations. 
In recent years, birational geometry of foliations in characteristic $0$ is actively studied in algebraic geometry.

It is natural to ask how the singularities of a foliation $\cF$ on $X$ and those of the quotient $X/\cF$ are related. There are several known results on this question obtained in \cite{PosvaA}. In particular, Posva proved the following theorem on the quotients of regular surfaces. 
\begin{thm}[Theorem \ref{Posva}]
Let $S$ be a regular surface over $k$, and $\cF$ a 1-foliation of rank 1 on $S$. Then $S/\cF$ is F-regular if and only if $\cF$ is log canonical.
\end{thm}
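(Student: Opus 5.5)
Since $S$ is a regular surface and $\cF$ has rank 1, the statement is local. It suffices to work at a closed point $x\in S$ with $A=\widehat{\cO}_{S,x}\cong k[[x,y]]$. Near $x$ the foliation is generated by a single derivation
\[
D=f\,\partial_x+g\,\partial_y, \qquad \gcd(f,g)=1 \ \text{(by saturation)},
\]
with $D^p=hD$ for some $h\in A$. The quotient ring is $A^D$, and $A$ is a free $A^D$-module of rank $p$ with basis $1,y,\dots,y^{p-1}$ after a suitable choice of coordinates. At points where $D(\fm)\not\subset\fm$ the foliation is smooth, so both sides of the equivalence hold: $A^D$ is regular and $\cF$ is lc there. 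Hence we may assume $x$ is an isolated singular point of $D$, i.e.\ $f,g\in\fm$.

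The plan has three steps.

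\textbf{Step 1: reduce to the linear part.} Write $D=L+(\text{higher order})$, where $L$ is the linear part, determined by a $2\times 2$ matrix $M$. Recall the standard fact, from the characteristic-$0$ theory transposed to positive characteristic, that a rank-one foliation on a smooth surface is lc exactly when its singularities are reduced or nilpotent-type with nonzero linear part. I would establish the precise positive-characteristic version directly. Compute discrepancies of the exceptional divisor of the blow-up and iterate, using $a(E;\cF)=\nu-\ell(E)$ type formulas: invariant divisors get $\epsilon=0$ and non-invariant ones get $\epsilon=1$. The goal is:
\begin{enumerate}
\item If $M$ is nilpotent with $M=0$, i.e.\ $D\in\fm^2\Der{A}$, then the first blow-up gives discrepancy $<-\epsilon(E)$, so $\cF$ is not lc.
\item Otherwise $\cF$ is lc.
\end{enumerate}
The $p$-closedness constraint $D^p=hD$ is used here. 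It forces $M$ to be semisimple with eigenvalues in $\mathbb{F}_p$, or $M$ nilpotent with strong restrictions. After a formal change of coordinates (a positive-characteristic Poincaré–Dulac argument, valid because $D$ is $p$-closed), one can take
\[
D=\lambda x\partial_x+\mu y\partial_y
\]
with $\lambda,\mu\in\mathbb{F}_p$ not both zero, or $D$ of a nilpotent normal form.

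\textbf{Step 2: F-regularity of the quotients in the lc cases.} In the multiplicative case, $A^D$ is the invariant ring of the diagonal action of $\mu_p$ with weights $(\lambda,\mu)$ on $k[[x,y]]$. If $\lambda\mu\neq0$ this is a cyclic quotient singularity, which is toric and hence F-regular. If one weight is zero, the quotient is regular. For the remaining lc cases with non-semisimple but nonzero linear part, I would compute $A^D$ explicitly via generators. For example, $y\partial_x+\cdots$ gives $A^D=k[[x^p, \ldots]]$. The aim is to show that $A^D$ is regular or a toric/quotient singularity, or else verify strong F-regularity by Fedder-type splitting using the free basis.

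\textbf{Step 3: failure of F-regularity when $\cF$ is not lc.} Here $D\in\fm^2\Der{A}$, and I would show $A^D$ is not F-regular. Since $A^D$ is a 2-dimensional normal ring, F-regularity would make it a klt quotient singularity, hence $\QQ$-Gorenstein with lc threshold constraints. The approach is to compare $K$: the purely inseparable map $\pi\colon S\to S/\cF$ satisfies
\[
\pi^*K_{S/\cF}=K_S+(p-1)K_\cF .
\]
Pulling back discrepancies along the blow-up shows that the divisor $E$ with $a(E;\cF)<-\epsilon(E)$ yields a divisor over $S/\cF$ with discrepancy $\le -1$. So $S/\cF$ is not klt and therefore not F-regular (F-regular implies klt).

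The main obstacle is Step 1, and specifically the normal-form classification of $p$-closed derivations whose linear part is nilpotent but nonzero. I would first try to show that $p$-closedness forces either $D\in\fm^2\Der{A}$ or the multiplicative linear normal form, so that the nilpotent nonzero case does not occur or reduces to an additive $\alpha_p$-quotient that can be handled explicitly.
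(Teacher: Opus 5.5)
This statement has no proof in the paper; it is quoted from Posva. Still, the ingredients the paper records (Proposition \ref{nilpotence} and Theorem \ref{multi}) show that your plan has a genuine gap, starting with the lc criterion in Step 1.

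\textbf{Step 1 and Step 2.} For a singular generator $D\in\fm\Der{A}$, $\cF$ is lc if and only if $M=D|_{\fm/\fm^2}$ is \emph{not nilpotent}. A nonzero nilpotent $M$ gives a non-lc foliation, so your item (2) is false.

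Your hope that $p$-closedness rules out a nonzero nilpotent $M$ is also false:
\begin{itemize}
\item In characteristic $3$, $y\partial_x+x^3\partial_y$ satisfies $D^3=0$ and has Jordan-block linear part. Its quotient is the $E_6^0$ mentioned in the introduction.
\item For $p=5$ the same phenomenon produces $E_8^0$ (Lemma \ref{expression} and Theorem \ref{4/5-klt}).
\end{itemize}
These quotients are not F-regular. So in the case you single out as the main obstacle, Step 2 would be trying to prove a false statement.

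What $p$-closedness actually gives is a clean dichotomy. The relation $D^p=hD$ induces $M^p=h(0)M$ on $\fm/\fm^2$. Moreover, $D\in\fm^2\Der{A}$ forces $h\in\fm$ by comparing orders. Hence, for singular $D$, $M$ is non-nilpotent if and only if $h$ is a unit, i.e.\ if and only if $D$ is multiplicative. So lc is equivalent to ``smooth or multiplicative''. Theorem \ref{multi} then gives a toric singularity $\frac1p(1,\lambda)$, which is F-regular. That settles the ``if'' direction with no normal-form analysis of the nilpotent case.

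\textbf{Step 3.} This step fails for a more basic reason: in characteristics $2,3,5$, ``not lc $\Rightarrow$ not klt'' is false, and this is exactly what the paper is about. By Theorem \ref{(p-1)/p}, $S/\cF$ is klt if and only if $\cF$ is $\frac{p-1}{p}$-klt, which is strictly weaker than lc. The paper's $D_{2m}^0$ ($p=2$, with $D\in\fm^2\Der{A}$), $E_6^0$ and $E_8^0$ examples are non-lc foliations with canonical quotients.

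Your discrepancy estimate also does not give what you claim. Using \eqref{discrepancy relation} with $a(E;S)=1$ and $a(E;\cF)\le -1-\epE$, you only get $a(E';S/\cF)\le 2-p$ (invariant case) or $a(E';S/\cF)\le -2+3/p$ (non-invariant case). Both bounds are $>-1$ when $p=2$. In the nilpotent-nonzero case, the first blow-up gives no violation at all.

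So the ``only if'' direction needs an input that separates F-regularity from klt. One possible route:
\begin{itemize}
\item Set $R=A^D$. Since $A^p\subseteq R\subseteq A$, F-purity of $R$ already forces $R\to A$ to split.
\item By Lemma \ref{alpha}, $D^{p-1}-h$ is an $R$-linear map $A\to R$.
\item A check at height-one primes, where $D$ is smooth, shows that this map generates the free rank-one $A$-module $\mathrm{Hom}_R(A,R)$.
\item If $D(A)\subseteq\fm$ and $h\in\fm$, its image lies in $\fm$. Then no splitting exists, and $R$ is not even F-pure.
\end{itemize}

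A smaller error: $A$ is not free over $A^D$ at singular points, since flatness of $A$ over $A^D$ would force $A^D$ to be regular. What is free of rank $p$ is $A^D$ over $k[[x^p,y^p]]$, as used in Proposition \ref{MR}.
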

On the other hand, it is known that every singularity of the quotient by an lc foliation on a regular surface is a toric singularity of type $\frac{1}{p}(1,\lambda)$
(Proposition \ref{multi}). In addition, if $p \geq 7$, $F$-regularity is equivalent to being klt in dimension $2$. These results complete the classification of klt quotients by $1$-foliations on regular surfaces in case $p \geq 7$. 

However, in the cases $p=2,3$ and $5$, there are some klt quotients that are not F-regular. For example, in $p=2$, the quotient of $k[x,y]$ by the derivation $(x^2+mxy^{m-1}) \partial_x + y^m \partial_y\;(m \geq 2)$ has a rational double point of type $D_{2m}^0$, and in $p=3$, the quotient of $k[x,y]$ by the derivation $y\partial_x+x^3\partial_y$ has a rational double point of type $E_6^0$. To characterize these derivations, we focus on the adjoint foliated structures. 
\begin{dfn}[\textup{Definition \ref{adjoint}}] 
Let $\cF$ be a foliation on $X$ and $t \in [0,1]$.
We define \textbf{the adjoint discrepancy} by
$$a(E;X,\cF,t):=ta(E;\cF)+(1-t)a(E;X).$$
The foliation $\cF$ is \textbf{$t$-log canonical} (for short, \textbf{$t$-lc}) (resp. \textbf{$t$-klt}) if $a(E;X,\cF,t) \geq -t\epE-(1-t)$ (resp. $>-t\epE-(1-t)$)
for any exceptional prime divisor $E$ over $X$. 
\end{dfn}

The condition that $\cF$ is $t$-lc provides a parameterized evaluation of the singularities of $\cF$ and $X$. In particular, the cases $t=0$ and $t=1$ correspond to $X$ being lc and $\cF$ being lc, respectively. Using this characterization and some formulas for divisors on the quotient, we prove the following theorem.
\begin{thm}[\textup{Theorem \ref{(p-1)/p}}] \label{Main Theorem1}
Let $X$ be a normal variety over $k$ and $\cF$ a 1-foliation on $X$. Then $\cF$ is $\frac{p-1}{p}$-log canonical (resp. $\frac{p-1}{p}$-klt) if and only if $X/\cF$ is log canonical (resp. klt).
\end{thm}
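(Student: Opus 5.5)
The plan is to compare discrepancies over $X$ and over $Y:=X/\cF$ directly, through the quotient morphism $\pi\colon X\to Y$. This is a finite, purely inseparable morphism of degree $p$, hence a universal homeomorphism. We will use the canonical bundle formula for quotients by $1$-foliations, which I take as the earlier-established formula in the paper:
$$\pi^*K_Y \sim K_X+(p-1)K_{\cF}.$$
Its sign is calibrated exactly so that the weight $t=\frac{p-1}{p}$ appears. We will also use the ramification formula for a prime divisor $E$ on $X$ with image $E_Y=\pi(E)$:
$$\pi^*E_Y=\bigl(1+(p-1)\epE\bigr)E.$$
So $\pi^*E_Y=pE$ if $E$ is not $\cF$-invariant, and $\pi^*E_Y=E$ if $E$ is invariant. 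This is a local computation at the generic point of $E$, which is a DVR over a DVR with $e f=p$, and invariance decides which factor equals $p$.

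First I would set up the correspondence of divisors. Because $K(X)/K(Y)$ is purely inseparable of degree $p$, divisorial valuations of $K(X)$ and $K(Y)$ correspond bijectively by restriction, and exceptional ones correspond to exceptional ones. Concretely, take a birational model $g\colon Y'\to Y$ on which $E_Y$ appears, and let $X'$ be the normalization of $Y'$ in $K(X)$. Then $f\colon X'\to X$ is birational, $\cF$ extends to a $1$-foliation $\cF'$ on $X'$ (the saturation of $\cF$ in $T_{X'}$), and $Y'=X'/\cF'$ because $\cO_{X'}^{\cF'}$ is the integral closure of $\cO_Y$ in $K(X)^{\cF}=K(Y)$. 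Conversely, for any model $X'$ one uses $Y'=X'/\cF'$. The induced map $\pi'\colon X'\to Y'$ satisfies $g\circ\pi'=\pi\circ f$.

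Next I would compare discrepancies. Write
$$K_{X'}=f^*K_X+\sum a(E;X)E,\qquad K_{\cF'}=f^*K_\cF+\sum a(E;\cF)E,\qquad K_{Y'}=g^*K_Y+\sum a(E_Y;Y)E_Y.$$
Apply $\pi'^*$ to the third equation, and use the quotient formula on both $X'$ and $X$ together with $\pi'^*g^*=f^*\pi^*$. Comparing the coefficients of $E$ then gives
$$\bigl(1+(p-1)\epE\bigr)\,a(E_Y;Y)=(p-1)a(E;\cF)+a(E;X)=p\,a\bigl(E;X,\cF,\tfrac{p-1}{p}\bigr).$$
Hence $a(E_Y;Y)\ge -1$ (resp. $>-1$) if and only if
$$a\bigl(E;X,\cF,\tfrac{p-1}{p}\bigr)\ge -\tfrac{1+(p-1)\epE}{p}=-t\epE-(1-t)\qquad\text{with } t=\tfrac{p-1}{p}$$
(resp. strict inequality). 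Ranging over all exceptional $E$, and using the bijection from the previous step, gives the theorem.

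The main obstacle is making this rigorous when $X$ and $Y$ are merely normal. One needs $K_Y$ to be $\mathbb{Q}$-Cartier exactly when $K_X+(p-1)K_\cF$ is, since the pullback formula is an equality of Weil divisor classes valid in codimension one. For that I would check that $\pi^*$ and $\pi_*$ are mutually inverse up to the factor $p$ on $\mathbb{Q}$-Weil divisor classes (because $\pi_*\pi^*=p$ and $\pi$ is a homeomorphism). I would also need to verify the ramification formula, and the identification of $\epE$ with the invariance of $E$ under $\cF'$, on the normal model $X'$. I would do this locally at codimension-one points, where everything is regular.
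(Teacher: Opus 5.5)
Your proposal is correct and is essentially the paper's proof. The paper also matches birational models over $X$ and $Y$ (Proposition~\ref{rel}(1)--(2); its $X'$ is the normalization of $X\times_Y Y'$, which is your normalization of $Y'$ in $K(X)$). It then combines $q^*K_{X/\cF}=K_X+(p-1)K_\cF$ with Lemma~\ref{pullback E'} to get $\delta(E)\,a(E';X/\cF)=(p-1)a(E;\cF)+a(E;X)$, and concludes using $\delta(E)=(p-1)\epE+1$.

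Two minor points. First, $\cO_{X'}^{\cF'}=\cO_{X'}\cap K(Y)$ is the integral closure of $\cO_{Y'}$ (not $\cO_Y$) in $K(Y)$. Second, the paper handles the $\QQ$-Cartier issue by assuming $X$ and $\cF$ are $\QQ$-Gorenstein and citing Posva. If you want the converse direction yourself, $\pi_*\pi^*=p$ alone does not show that $\pi_*$ preserves Cartier divisors. What does it is that Frobenius factors as $F_X=\sigma\circ\pi$, since $\cO_X^p\subseteq\cO_Y$. Then for Cartier $M$, injectivity of $\pi^*$ gives $\pi_*M=\sigma^*M$, which is Cartier.
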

This theorem not only shows the relation between the singularities of foliations and those of their quotients, but also plays an important role in the classification of klt quotients of regular surfaces. We now explain how to classify the quotients.

First, we consider the blow-up of a regular variety. Then it is easy to evaluate the discrepancy $a(E;\cF)$, where $E$ is the unique exceptional divisor of the blow-up (Lemma \ref{blow-up}). In particular, for a regular surface, if the order of a generator of the foliation is $\geq 2$, then we see that it is not $\frac{2}{3}$-klt (Proposition \ref{m2}). This observation and the above theorem help us to analyze $\frac{p-1}{p}$-klt foliations.
As a result, we obtain the classification of klt quotients of regular surfaces in the cases $p=2,3$ and $5$.
\begin{thm}[\textup{Theorem \ref{p=2,3} and Corollary \ref{p=5}}] \label{Main Theorem2}
Let $S$ be a regular surface over $k$ and $\cF$ a $(p-1)/p$-klt 1-foliation on $S$.
\begin{enumerate}
    \item If $p=2$, $S/\cF$ has at worst rational double points.
    \item If $p=3$, each singular point of $S/\cF$ is either
    \begin{enumerate}
         \item a rational double point, or
         \item a toric singularity of type $\frac{1}{3}(1,1)$.
    \end{enumerate}
    \item If $p=5$, each singular point of $S/\cF$ is either 
    \begin{enumerate}
        \item a rational double point, or
        \item a toric singularity of type $\frac{1}{5}(1,1)$ or $\frac{1}{5}(1,2)$.
    \end{enumerate}
\end{enumerate}
\end{thm}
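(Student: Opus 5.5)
By Theorem \ref{Main Theorem1}, the hypothesis says exactly that $S/\cF$ is klt. The plan is therefore to locate $\cF$ locally and read off which klt singularities actually occur. Work at a closed point $P\in S$ and take a local generator $\partial=f\partial_x+g\partial_y$ of $\cF$ with $f,g$ coprime, since $\cF$ is saturated of rank one. If $\partial$ does not vanish at $P$, then $\cF$ is smooth there and $S/\cF$ is regular at the image of $P$. So assume $\partial(P)=0$ and let $m$ be the order of vanishing of $\partial$ at $P$.

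First consider $m\ge 2$. By Proposition \ref{m2} (which uses Lemma \ref{blow-up} on the exceptional divisor of the blow-up), $\cF$ is then not $\frac23$-klt at $P$. The adjoint discrepancy $a(E;X,\cF,t)+t\epE+(1-t)$ is affine in $t$. I expect the blow-up computation to show it is decreasing in the relevant range, so failure of $\frac23$-klt should imply failure of $t$-klt for every $t\ge\frac23$. This covers $t=\frac{p-1}{p}$ for $p=3,5$. For $p=2$ we have $t=\frac12$, so the case $m\ge2$ can genuinely occur; the example $(x^2+mxy^{m-1})\partial_x+y^m\partial_y$ shows this. Hence for $p=2$ I argue differently. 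Since $S$ is regular, the quotient $S/\cF$ is Gorenstein: its canonical divisor is Cartier by the formula $\pi^*K_{S/\cF}=(p-1)K_\cF+K_S$, and the linear part is determined by $\partial$. A klt Gorenstein surface singularity is canonical, hence a rational double point. This proves (1), and the same Gorenstein argument gives a rational double point whenever the quotient singularity is Gorenstein, for any $p$.

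Now take $p=3,5$, where we may assume $m=1$. Then the linear part of $\partial$ is a nonzero nilpotent or non-nilpotent matrix. In the non-nilpotent case I use Proposition \ref{multi}: after a formal change of coordinates $\partial$ is multiplicative, $\partial=x\partial_x+\lambda y\partial_y$ with $\lambda\in\mathbb{F}_p^\times$. The quotient is then the toric singularity $\frac1p(1,-\lambda)$ up to the appropriate normalization. Among these, the klt ones are all of them, and the Gorenstein one $\frac1p(1,-1)$ is $A_{p-1}$. Up to the symmetry $\lambda\leftrightarrow\lambda^{-1}$, the remaining types are $\frac13(1,1)$ for $p=3$, and $\frac15(1,1)$ and $\frac15(1,2)$ for $p=5$ (with $\frac15(1,3)\cong\frac15(1,2)$). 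In the nilpotent case the linear part is $y\partial_x$ after a linear change. The expected outcome is that the quotient is Gorenstein, because the local class group of the quotient is $p$-torsion generated by the class of $K_\cF$, which is trivial here. Gorensteinness plus klt then gives a rational double point, as in the $E_6^0$ example.

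The main obstacle is the nilpotent case $m=1$. There the quotient need not be toric, and one must check either that it is Gorenstein or directly that it is a rational double point. I would try first to show $K_{S/\cF}$ is Cartier from the pullback formula together with the fact that $\cF$ is locally free on regular $S$. If $\pi^*$ of Weil divisors controls Cartierness only up to $p$-torsion, I would instead compute discrepancies on the minimal resolution obtained by repeatedly blowing up, using Lemma \ref{blow-up}. A secondary check is the monotonicity in $t$ needed to extend Proposition \ref{m2} from $t=\frac23$ to $t=\frac45$.
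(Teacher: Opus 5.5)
\textbf{There is a genuine gap: every time you need Gorensteinness, the reason you give does not work.}

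Several parts of your plan are fine and match the paper.
\begin{itemize}
\item Theorem \ref{Main Theorem1} turns the hypothesis into klt-ness of $S/\cF$.
\item For $p=3,5$, Proposition \ref{m2} rules out $\delta\in\fm^2\Der{\cO_{S,q}}$. The monotonicity you flag is just Lemma \ref{t'-klt}, since $S$ is klt.
\item A non-nilpotent linear part $L$ forces $\delta$ to be multiplicative, because $\delta^p=\alpha\delta$ gives $L^p=\alpha(0)L$. So Theorem \ref{multi} gives your toric list.
\item Klt plus Gorenstein gives a rational double point.
\end{itemize}

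The formula $q^*K_{S/\cF}=K_S+(p-1)K_\cF$ only tells you that the pullback of $K_{S/\cF}$ to $S$ is Cartier. That is automatic on a regular surface: $q^*$ annihilates the entire ($p$-torsion) local class group of the quotient. Likewise, $K_\cF$ is locally trivial for every rank-one foliation on a regular surface, not only in the nilpotent case. If either argument were valid, it would show that every such quotient is Gorenstein. That contradicts the non-Gorenstein singularities $\frac13(1,1)$, $\frac15(1,1)$, $\frac15(1,2)$ in your own list.

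So part (1), and the nilpotent case of (2) and (3), are not proved. Your fallback of computing discrepancies on repeated blow-ups is not a concrete argument either. By Proposition \ref{rel}, you would need $(p-1)a(E;\cF)+a(E;S)\ge 0$ for every exceptional $E$ over $q$, and nothing in your sketch controls all such $E$.

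The missing input is an explicit description of $\widehat{A}=k[[x,y]]^{\delta}$.
\begin{itemize}
\item For $p=2$: $\widehat{A}$ is free of rank $2$ over $k[[x^2,y^2]]$ with basis $1,\psi$, and $\psi^2\in k[[x^2,y^2]]$. Hence $\widehat{A}=k[[x^2,y^2,\psi]]$ is a hypersurface (Proposition \ref{MR}(1)).
\item For $p=3$: Proposition \ref{MR}(2) gives $\widehat{A}=k[[x^3,y^3,\phi^2\psi,\phi\psi^2]]$. Lemma \ref{coprime} shows that $\delta'=(\phi_y\psi-\phi\psi_y)\partial_x-(\phi_x\psi-\phi\psi_x)\partial_y$ generates $\cF$. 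If $\phi$ or $\psi$ is a unit, $\widehat{A}$ is a hypersurface. Otherwise $\delta'\notin\fm^2\Der{k[[x,y]]}$ forces $\phi,\psi$ to be a system of parameters, and the quotient is $\frac13(1,1)$, whose generator has non-nilpotent linear part. In particular, a nilpotent linear part always lands in the hypersurface case, which is exactly the fact you needed.
\item For $p=5$: the klt hypothesis must be used a second time. Lemma \ref{expression} blows up the point $q'\in E$ to force the normal form $y\partial_x+(x^2+g)\partial_y$ with $g\in(x^3,xy,y^2)$. Theorem \ref{4/5-klt} then uses the weights $\deg x=2$, $\deg y=3$ to show $\alpha\in I_5$. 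It builds the invariant $\tau=\delta^4(x)-\alpha x=2(x^3+y^2)+h$ with $h\in I_7$, and identifies $\widehat{A}\cong k[[X,Y,Z]]/(X^2+Y^3+Z^5)$, which is $E_8^0$.
\end{itemize}
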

Note that a rational double point given as the quotient by a $p$-closed derivation of a regular local ring is of type $A_1,D_{2m}^0(m \geq 2),E^0_7$ and $E^0_8$ if $p=2$; $A_2,E^0_6$ and $E^0_8$ if $p=3$; and $A_4$ and $E^0_8$ if $p=5$ (for details, see \cite{Matsu1}).

This paper is organized as follows. In Section 2, we review some properties of the derivations, foliations, and quotients to prepare for later sections.
In Section 3, we introduce adjoint foliated structures and prove Theorem \ref{Main Theorem1}.
In Section 4, we specifically consider the case of regular surfaces and classify the klt quotients to obtain Theorem \ref{Main Theorem2}.

\subsection*{Acknowledgments}
I would like to thank my supervisor Takehiko Yasuda, who gave me continuous support and valuable suggestions. I am also grateful to Hiroyuki Ito, Yuya Matsumoto, and Quentin Posva for many helpful comments. 

\section{Preliminaries}
Throughout this paper, let $k$ be an algebraically closed field of characteristic $p>0$.

\subsection{Derivations}
Let $A$ be $k$-algebra. A \textbf{derivation} of $A$ over $k$ is a $k$-linear map $\delta: A \to A$ satisfying 
$$\delta(ab)=\delta(a)b+a\delta(b),\;\;\;a,b\in A.$$
We denote by $\Der{A}$ the set of all derivations of $A$ over $k$, which is an $A$-module. If $\delta,\delta' \in \Der{A}$, then we can compose $\delta$ and $\delta'$ as maps $A \to A$. Since
\begin{align*}
(\delta\delta'-\delta'\delta)(ab)&=\delta(\delta'(a)b+a\delta'(b))-\delta'(\delta(a)b+a\delta(b)) \\
&=\delta\delta'(a)b+\delta'(a)\delta(b)+\delta(a)\delta'(b)+a\delta\delta'(b)-\delta'\delta(a)b-\delta(a)\delta'(b)-\delta'(a)\delta(b)-a\delta'\delta(b) \\
&=(\delta\delta'-\delta'\delta)(a)\cdot b+a\cdot (\delta\delta'-\delta'\delta)(b),
\end{align*}
we see that the bracket $[\delta,\delta']=\delta\delta'-\delta'\delta$ is again an element of $\Der{A}$, and that $\Der{A}$ becomes a Lie algebra with this bracket. 

Also, using Leibniz formula, we obtain
$$\delta^n(ab)=\sum_{i=0}^{n} \binom{n}{i} \delta^i(a)\delta^{n-i}(b).$$
In particular, if $n=p$, then we have $\delta^p(ab)=\delta^p(a)b+a\delta^p(b)$, which implies that $\delta^p \in \Der{A}$. $\delta$ is called \textbf{$p$-closed} if there exists $\alpha \in A$ such that $\delta^p=\alpha \delta$.

\begin{dfn}
We say that $\delta$ is \textbf{multiplicative} if $\delta^p=u\delta$ for some unit $u \in A^{\times}$. We say that $\delta$ is \textbf{additive} if $\delta^p=0$.
\end{dfn}

\begin{lem} \textup{\cite[Lemma 2.3]{Matsu1}} \label{alpha}
Let $A$ be a reduced $k$-algebra and $\delta \in \Der{A}$ a $p$-closed derivation with $\delta^p=\alpha\delta$.
Then $\delta(\alpha)=0$.
\end{lem}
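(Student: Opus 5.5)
Write $\delta^{p+1}$ in two ways. Since $\delta^p = \alpha\delta$ as derivations, we have
$$\delta^{p+1} = \delta\circ\delta^p = \delta\circ(\alpha\delta), \qquad \delta^{p+1} = \delta^p\circ\delta = \alpha\,\delta\circ\delta .$$
Applying the Leibniz rule to the first expression at any $a\in A$ gives
$$\delta(\alpha\,\delta(a)) = \delta(\alpha)\delta(a) + \alpha\,\delta^2(a).$$
The second expression gives $\alpha\,\delta^2(a)$. Comparing the two, we get
$$\delta(\alpha)\,\delta(a) = 0 \quad \text{for all } a\in A.$$

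This shows only that $\delta(\alpha)$ kills the image $\delta(A)$. It does not yet give $\delta(\alpha)=0$, and this is where reducedness of $A$ is needed. The natural move is to take $a=\alpha$, which gives $\delta(\alpha)^2 = 0$. Since $A$ is reduced, this forces $\delta(\alpha)=0$.

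The argument is short, so there is no real obstacle. The two points to check are that $\delta^p=\alpha\delta$ is an identity of operators, so composing it with $\delta$ on either side is legitimate, and that it is exactly the choice $a=\alpha$ that makes reducedness applicable.
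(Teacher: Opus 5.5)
Your proof is correct and is essentially the paper's argument: you compute $\delta^{p+1}$ as both $\alpha\delta^2$ and $\delta(\alpha)\delta+\alpha\delta^2$ to get $\delta(\alpha)\delta=0$, then evaluate at $a=\alpha$ to get $\delta(\alpha)^2=0$. Reducedness then gives $\delta(\alpha)=0$.
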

\begin{proof}
We have 
$$\alpha\delta^2=\delta^{p+1}=\delta \circ (\alpha\delta)=\delta(\alpha)\delta+\alpha\delta^2.$$
Hence $\delta(\alpha)\delta=0$. In particular, as $\delta(\alpha)^2=0$ holds, we obtain $\delta(\alpha)=0$. 
\end{proof}

\subsection{Foliations}
Unless stated otherwise, let $X$ be a normal variety over $k$.
We denote by $T_X$ the tangent sheaf of $X$.
\begin{dfn}
A \textbf{foliation} on $X$ is a coherent sheaf $\cF \subseteq T_X$ such that
\begin{enumerate}
\item $\cF$ is saturated in $T_X$, i.e. $T_X/\cF$ is torsion free, and
\item $\cF$ is closed under the Lie bracket.
\end{enumerate}
A foliation is called \textbf{1-foliation} if it is also closed under $p$-th powers.
\end{dfn}

Consider the exact sequence
$$0 \to \cF \to T_X \to T_X/\cF \to 0.$$
Since $T_X/\cF$ is torsion-free, $\mathrm{Ass}(T_X/\cF)$ consists only of the generic point of $X$. Also, since $T_X$ is dual of the coherent sheaf $\Omega_X$, we see that $T_X$ is reflexive. Therefore $\cF$ is also reflexive by \cite[Corollary 1.5]{Reflexive}.
The \textbf{canonical divisor} of $\cF$ is any Weil divisor $K_{\cF}$ such that $\cO_X(-K_{\cF}) \cong \text{det}(\cF)$.

\begin{dfn}
Let $\pi:Y \to X$ be a birational morphism of normal varieties and $\cF$ a 1-foliation on $X$. The stalk of $\cF$ at the generic point $\eta_X$ is a $K(X)$-subspace of $\Der{K(X)}$. Since $\pi$ induces the isomorphism $\pi^*:\Der{K(X)} \to \Der{K(Y)}$, we obtain a $K(Y)$-subspace $\mathcal{G}=\pi^*(\cF_{\eta_X})$ of $\Der{K(Y)}$. We define the \textbf{pullback foliation} $\pi^*\cF$ as the saturation of $\mathcal{G}$. 
\end{dfn}

We explain how to calculate pullback foliations along smooth blow-ups on the affine space.

\begin{ex} \label{pullback ex}
Let $A=k[[x_1,\ldots,x_n]]$ and $X=\hat{\mathbb{A}}^n_k=\mathrm{Spec}(A)$. We consider the blow-up $\pi:\tilde{X} \to X$ at the origin. Since the Rees algebra of the ideal $\fm=(x_1,\ldots,x_n)$ is given by
$$\mathcal{R}(\fm)=\bigoplus_{d \geq 0} \fm^dt^d=A[x_1t,\ldots,x_nt] \subset A[t],$$
we have $\tilde{X}=\text{Proj}(A[T_1,\ldots,T_n]/(x_iT_j-x_jT_i\mid1\leq i <j \leq n))$.
Let $\tilde{A}=\cO_{\tilde{X}}(D_+(T_1))$ be the ring of sections over $D_{+}(T_1)$. Then $\tilde{A}$ can be described as follows:
\begin{align*}
  \tilde{A}&=(A[T_1,\ldots,T_n]/(x_iT_j-x_jT_i\mid 1\leq i <j \leq n))_{(T_1)}\\
   &= A\left[\frac{T_2}{T_1},\ldots,\frac{T_n}{T_1}\right]/(x_i-\frac{T_i}{T_1}x_1 \mid 2 \leq i \leq n) \\
  &= k[[y_1,\ldots,y_n]][t_2,\ldots,t_n]/(y_i-y_1t_i\mid 2 \leq i \leq n).
\end{align*}
Here, the last equality is given by the isomorphism defined by $x_i \mapsto y_i$ and $T_i/T_1 \mapsto t_i$. Thus one chart of the blow-up is given by
$$A \to \tilde{A};\;\; (x_1, x_2,\ldots,x_n) \mapsto (y_1,y_1t_2,\ldots,y_1t_n).$$
Using the induced isomorphism
$$\pi^*:\Der{k((x,y))} \to \Der{Q(\tilde{A})};\;\; \delta \mapsto \iota \circ \delta \circ \iota^{-1},$$
where $\iota:k((x,y)) \to Q(\tilde{A})$ is the isomorphism of the quotient fields, we have
$$\pi^*\partial_{x_1}(y_1)=\iota(\partial_{x_1}(x_1))=1,\;\pi^*\partial_{x_1}(t_i)=\iota(\partial_{x_1}(\frac{x_i}{x_1}))=\iota^{-1}(-\frac{x_i}{x_1^2})=-\frac{t_i}{y_1}\;(2\leq i \leq n)$$
and for $2 \leq i \leq n$,
$$
\pi^*\partial_{x_i}(y_1)=\iota (\partial_{x_i}(x_1))=0, \;\pi^{*}\partial_{x_i}(t_i)=\iota (\partial_{x_i}(\frac{x_i}{x_1}))=
\iota(\frac{1}{x_1})=\frac{1}{y_1},\;\pi^*\partial_{x_i}(t_j)=0\;(j \neq i).
$$
After the calculations above, we find the transformation rules
\begin{equation} \label{pullback formula}
    \pi^*\partial_{x_1}=\partial_{y_1}-\sum_{i=2}^n\frac{t_i}{y_1}\partial_{t_i},\;\; \pi^*\partial_{x_i}=\frac{1}{y_1}\partial_{t_i} \;\;(2 \leq i \leq n).
\end{equation}
\end{ex}

\subsection{Quotients by 1-foliations}
We define the invariant subrings of 1-foliations and collect several lemmas on their properties.
\begin{dfn}
Let $A$ be a $k$-algebra and $\cF \subseteq \Der{A}$ a 1-foliation.
The \textbf{invariant subring} of $\cF$ is defined as follows:
$$A^{\cF} \coloneqq \{a \in A \mid \delta (a) = 0 \;\;\forall\delta \in \cF\}.$$
\end{dfn}

\begin{lem} \textup{\cite{Aramova}} \label{homeo} %1.2.1
$\mathrm{Spec}(A^{\cF})$ is canonically homeomorphic to $\mathrm{Spec}(A)$.
\end{lem}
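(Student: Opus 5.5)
The key observation is that every element of $A$ becomes invariant after raising it to the $p$-th power: $A^p \subseteq A^{\cF} \subseteq A$. Indeed, for $a\in A$ and $\delta\in\cF$ we have $\delta(a^p)=pa^{p-1}\delta(a)=0$, since $\delta$ is a derivation and $p=0$ in $k$. We therefore expect the map $\varphi:\mathrm{Spec}(A)\to\mathrm{Spec}(A^{\cF})$, $\fp\mapsto \fp\cap A^{\cF}$, induced by the inclusion $A^{\cF}\hookrightarrow A$, to be a homeomorphism. The inverse should be
\[
\psi(\mathfrak{q})=\{a\in A \mid a^p\in\mathfrak{q}\},
\]
and the plan is to verify this purely ring-theoretically, using only the sandwich $A^p\subseteq A^{\cF}\subseteq A$.

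\textbf{Step 1: $A$ is integral over $A^{\cF}$.} Each $a\in A$ is a root of $T^p-a^p\in A^{\cF}[T]$. Hence $\varphi$ is surjective (lying over) and closed (going up).

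\textbf{Step 2: $\varphi$ is injective.} Take primes $\fp,\fp'$ of $A$ with $\fp\cap A^{\cF}=\fp'\cap A^{\cF}$. If $a\in\fp$, then $a^p\in \fp\cap A^{\cF}\subseteq\fp'$, so $a\in\fp'$ because $\fp'$ is prime. By symmetry $\fp=\fp'$.

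\textbf{Step 3: the inverse is $\psi$.} For a prime $\mathfrak q$ of $A^{\cF}$, the set $\psi(\mathfrak q)$ is an ideal. It is closed under addition because $(a+b)^p=a^p+b^p$ in characteristic $p$, and closed under multiplication by elements of $A$ because $(ca)^p=c^pa^p$ with $c^p\in A^{\cF}$. It is prime because $(ab)^p=a^pb^p\in\mathfrak q$ forces $a^p\in\mathfrak q$ or $b^p\in\mathfrak q$. Its contraction is $\mathfrak q$: for $a\in A^{\cF}$, $a^p\in\mathfrak q$ if and only if $a\in\mathfrak q$. So $\psi$ is a two-sided inverse of $\varphi$.

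\textbf{Step 4: homeomorphism.} The map $\varphi$ is continuous, being induced by a ring map, and it is a closed bijection by Steps 1 and 2, hence a homeomorphism. Alternatively, one checks directly that $\varphi(V(I))=V(I^{[p]}\cap A^{\cF})$, where $I^{[p]}$ is generated by $p$-th powers of elements of $I$. Canonicity is immediate, since $\varphi$ comes from the inclusion.

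There is no real obstacle here. The only point requiring care is the additivity of $\psi(\mathfrak q)$, which is exactly where the Frobenius identity $(a+b)^p=a^p+b^p$ is used.
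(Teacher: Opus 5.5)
Your proof is correct and complete. The paper gives no proof of this lemma; it only cites \cite{Aramova}. What you wrote is the standard self-contained argument: integrality plus injectivity via $p$-th powers, with the explicit inverse $\mathfrak q\mapsto\{a\in A \mid a^p\in\mathfrak q\}$. Your argument uses only the sandwich $A^p\subseteq A^{\cF}\subseteq A$. So it holds for any subring between $A^p$ and $A$, and none of the 1-foliation axioms are needed (saturation, closure under brackets and $p$-th powers).

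One addition is worthwhile, because the paper later uses a stronger statement. In the proof of Proposition \ref{rel}(2) it invokes that $q$ is a \emph{universal} homeomorphism. Your sandwich gives this with one more line. For $\fp\in\mathrm{Spec}(A)$ and $\mathfrak q=\fp\cap A^{\cF}$, the residue fields satisfy $\kappa(\fp)^p\subseteq\kappa(\mathfrak q)$, so the residue field extensions are purely inseparable. Combined with your Steps 1 and 2, the morphism is integral, surjective and universally injective, hence a universal homeomorphism.
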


\begin{lem} \label{quotient prop}
Let $A$ and $\cF$ be as above, and assume that $A$ is integral. Then:
\begin{enumerate}
\item $Q(A^{\cF})=Q(A)^{\cF_{(0)}}$,
\item $Q(A^\cF) \cap A=Q(A)^{\cF_{(0)}} \cap A=A^{\cF}$,
\item If $A$ is normal, so is $A^{\cF}$.
\end{enumerate}
\end{lem}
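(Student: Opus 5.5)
We need to prove Lemma \ref{quotient prop}: for an integral $k$-algebra $A$ and a 1-foliation $\cF\subseteq\Der{A}$, (1) $Q(A^{\cF})=Q(A)^{\cF_{(0)}}$, (2) $Q(A)^{\cF_{(0)}}\cap A=A^{\cF}$, and (3) $A^{\cF}$ is normal whenever $A$ is. Here $\cF_{(0)}=\cF\otimes_A Q(A)$ acts on $Q(A)$ by the quotient rule, $\delta(a/s)=(\delta(a)s-a\delta(s))/s^2$.

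The plan uses the fact that everything lies between $A^p$ and $A$. Since $\delta(a^p)=pa^{p-1}\delta(a)=0$ for every derivation, we have $A^p\subseteq A^{\cF}$ and $Q(A)^p\subseteq Q(A)^{\cF_{(0)}}$.

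\textbf{Step 1: part (1).} I would show both inclusions.
\begin{enumerate}
\item[(a)] If $a,s\in A^{\cF}$ with $s\neq 0$, the quotient rule gives $\delta(a/s)=0$ for all $\delta\in\cF$. Hence $\delta(a/s)=0$ for all $Q(A)$-linear combinations of such $\delta$, i.e. for all of $\cF_{(0)}$. This proves $Q(A^{\cF})\subseteq Q(A)^{\cF_{(0)}}$.
\item[(b)] Conversely, take $f=a/s\in Q(A)^{\cF_{(0)}}$ and rewrite it as $f=as^{p-1}/s^p$. The denominator $s^p$ lies in $A^p\subseteq A^{\cF}$. Since $\delta(s^p)=0$, we get $\delta(as^{p-1})=s^p\delta(f)=0$, so the numerator $as^{p-1}$ also lies in $A^{\cF}$. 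Thus $f\in Q(A^{\cF})$.
\end{enumerate}

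\textbf{Step 2: part (2).} Let $a\in A\cap Q(A)^{\cF_{(0)}}$. Then $\delta(a)=0$ in $Q(A)$ for every $\delta\in\cF$. Because $A\to Q(A)$ is injective ($A$ is a domain) and the action of $\cF$ on $A$ is the restriction of the action on $Q(A)$, we get $\delta(a)=0$ in $A$. Hence $a\in A^{\cF}$. The reverse inclusion is obvious, and the other equality in (2) is just (1).

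\textbf{Step 3: part (3).} This is the only step needing a small idea. Let $f\in Q(A^{\cF})$ be integral over $A^{\cF}$.
\begin{enumerate}
\item[(a)] Then $f$ is integral over $A$ and $f\in Q(A)$, so normality of $A$ gives $f\in A$.
\item[(b)] By (1), $f\in Q(A)^{\cF_{(0)}}$.
\item[(c)] By (2), $f\in A\cap Q(A)^{\cF_{(0)}}=A^{\cF}$.
\end{enumerate}
So $A^{\cF}$ is integrally closed in its fraction field. It is a domain as a subring of $A$, hence normal.

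No real obstacle is expected. The one point needing care is the denominator trick $s\mapsto s^p$ in Step 1(b), together with checking that $\cF_{(0)}$ acts on $Q(A)$ compatibly with the action of $\cF$ on $A$.
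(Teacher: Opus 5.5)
Your proposal is correct and follows essentially the same route as the paper: the rewriting $a/s = as^{p-1}/s^p$ with $s^p \in A^{\cF}$ for (1), restricting the extended derivation back to $A$ for (2), and combining normality of $A$ with (1) and (2) for (3). Your extra remarks, that $\cF_{(0)}$ is the $Q(A)$-span of $\cF$ and that $\delta(as^{p-1}) = s^p\delta(f)$ because $\delta(s^p)=0$, only make explicit what the paper leaves implicit.
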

\begin{proof}
We identify $\cF_{(0)}$ with a submodule of $\Der{Q(A)}$ as follows.
For $\delta \in \Der{A}$, we can uniquely define the derivation $\bar{\delta} \in \Der{Q(A)}$ by the formula
$$\bar{\delta}(\frac{a}{b})=\frac{\delta(a)b-a\delta(b)}{b^2}.$$
Using this extension of $\delta$, we obtain the canonical map 
$$\Der{A} \to \Der{Q(A)};\;\delta \mapsto \bar{\delta}.$$
We identify $\cF_{(0)}$ with the image of $\cF$ under this map.

(1) It is easy to see that $Q(A^{\cF}) \subseteq Q(A)^{\cF_{(0)}}$.
Let $a/b \in Q(A)^{\cF_{(0)}}$. For any $\delta \in \cF$, we have
$$\bar{\delta}(\frac{a}{b})=\bar{\delta}(\frac{ab^{p-1}}{b^p})=\frac{\delta(ab^{p-1})}{b^p}=0.$$
This implies $\delta(ab^{p-1})=0$. Since $ab^{p-1},b^p \in A^{\cF}$, we obtain $$\frac{a}{b}=\frac{ab^{p-1}}{b^p} \in Q(A^{\cF}),$$
as required.

(2) Since we have $Q(A^{\cF}) \cap A=Q(A)^{\cF_{(0)}} \cap A$, it is sufficient to show that $A^{\cF} \subseteq Q(A^{\cF}) \cap A$ and $Q(A)^{\cF_{(0)}} \cap A \subseteq A^{\cF}$. The former is clear. Suppose that $a\in Q(A)^{\cF_{(0)}} \cap A$. Then it follows that  
$$\bar{\delta}(\frac{a}{1})=\frac{\delta(a)}{1}=0$$
for any $\delta \in \cF$.
Thus we have $\delta(a)=0$. Therefore it follows that $A \cap Q(A)^{\cF} \subseteq A^{\cF}$.

(3) Assume that $x \in Q(A^{\cF})$ is integral over $A^{\cF}$. Since $A^{\cF} \subseteq A$ and $A$ is normal, it follows that $x \in Q(A^{\cF}) \cap A=A^{\cF}$. Thus $A^{\cF}$ is normal. 
\end{proof}

By gluing the spectra of these rings, we can globally define quotients by 1-foliations.
\begin{dfn}
Let $\cF$ be a 1-foliation on $X$.
For an affine open cover $X=\bigcup \mathrm{Spec}(A_i)$, we define by $X/\cF=\bigcup \mathrm{Spec}(A_i^{\cF})$ the \textbf{quotient} of $X$ by $\cF$. 
\end{dfn}

\begin{rem}
A natural morphism $q:X \to X/\cF$ is induced by the ring inclusion $A^{\cF} \subseteq A$. By Lemma \ref{homeo}, this morphism gives a homeomorphism between their underlying spaces.  
\end{rem}

It is known that there is a Galois type correspondence between 1-foliations and their quotients. 

\begin{prop} \textup{\cite[Lemma 1.2.1]{MiyanishiIto}}  \label{Jacobson}
Let $P$ be a field extension of $k$. Then there exists a one-to-one correspondence between the set $\mathscr{E}$ of intermediate field extensions $\Phi$ with $P^p \subseteq \Phi \subseteq P$ and the set $\mathscr{D}$ of $p$-Lie subalgebras (i.e. closed under the Lie bracket and the $p$-th power) $\mathfrak{D}$ of $\Der{P}$.
The correspondence is given by
$$\Phi \mapsto \mathfrak{D}_{\Phi}(P)=\{\delta \in \Der{P} \mid \delta|_{\Phi}=0\}$$
and
$$\mathfrak{D} \mapsto C(\mathfrak{D})=\{\xi \in P\mid \delta(\xi)=0\; for\;all \;\delta \in \mathfrak{D}\}.$$
\end{prop}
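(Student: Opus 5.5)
This is the Jacobson correspondence. The plan is to show that the two maps are well defined and mutually inverse, following the classical argument via $p$-bases.

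\textbf{Well-definedness.} Given $\Phi$ with $P^p\subseteq \Phi\subseteq P$, the set $\mathfrak{D}_\Phi(P)$ is a $P$-subspace of $\Der{P}$. It is closed under brackets and $p$-th powers, since both operations preserve the property of vanishing on $\Phi$. Conversely, $C(\mathfrak{D})$ is a subfield, because the kernel of a derivation is closed under sums, products and inverses. It contains $P^p$, because $\delta(a^p)=pa^{p-1}\delta(a)=0$.

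\textbf{$C(\mathfrak{D}_\Phi(P))=\Phi$.} The inclusion $\supseteq$ is trivial. For the reverse, I will use $p$-bases, which exist by Zorn's lemma. Choose a $p$-basis $\{x_i\}_{i\in I}$ of $P$ over $\Phi$, so that $P=\Phi(x_i)_{i \in I}$ with the monomials $x^{e}$, $0\le e_i<p$ (finitely many $e_i$ nonzero), forming a $\Phi$-basis. For each $i$ there is a unique $\Phi$-derivation $\partial_i$ with $\partial_i(x_j)=\delta_{ij}$; this is the standard construction of a derivation on the extension $\Phi(x_i)/\Phi(x_j)_{j\ne i}$ of purely inseparable degree $p$. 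If $\xi\notin\Phi$, write $\xi$ in the monomial basis. Some monomial with a nonzero exponent $e_i$ occurs, and then $\partial_i\xi\neq0$. Hence $C(\mathfrak{D}_\Phi(P))\subseteq\Phi$.

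\textbf{$\mathfrak{D}_{C(\mathfrak{D})}(P)=\mathfrak{D}$.} This is the hard direction; the inclusion $\supseteq$ is trivial. The key is Jacobson's theorem: if $\mathfrak{D}$ is a restricted Lie subalgebra and a $P$-subspace with $\dim_P\mathfrak{D}=n<\infty$, then $[P:C(\mathfrak{D})]=p^n$. The plan is as follows.
\begin{enumerate}
\item Find a basis $\delta_1,\dots,\delta_n$ of $\mathfrak{D}$ and elements $x_1,\dots,x_n$ with $\delta_i(x_j)=\delta_{ij}$. This is Gaussian elimination: the matrix $(\delta_i(y_j))$ has full rank for a suitable choice of $y$'s, since otherwise some nonzero combination $\sum c_i\delta_i$ would kill all of $P$.
\item Show that the new basis satisfies $[\delta_i,\delta_j]=0$ and $\delta_i^p=0$. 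Write $[\delta_i,\delta_j]=\sum c_l\delta_l$ and evaluate at $x_l$; the left side gives $0$, so $c_l=0$. The same argument applies to $\delta_i^p$.
\item With these commuting nilpotent operators, show that the monomials $x^e$ are linearly independent over $C=C(\mathfrak{D})$ and span $P$ over $C$. Independence follows by applying $\delta^{e}$ to a minimal relation. For spanning, use the ``Taylor projection'' $\prod_i(1-x_i^{p-1}\delta_i^{p-1}\cdots)$, or induct on $n$ by taking kernels one derivation at a time. This gives $[P:C]=p^n$.
\item Then $\mathfrak{D}\subseteq \mathfrak{D}_C(P)$, and by the previous step $\dim_P\mathfrak{D}_C(P)=\log_p[P:C]=n$. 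Hence the two spaces are equal.
\end{enumerate}

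In the infinite-dimensional case, I would either restrict to finitely generated $P/k$, as in the geometric setting here where the degree is finite, or argue via finite-dimensional subalgebras.

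The main obstacle is the spanning part of step 3, namely producing enough constants so that $[P:C]\le p^n$. I would try induction on $n$. Let $C_1=\ker\delta_n$; then $\delta_1,\dots,\delta_{n-1}$ restrict to $C_1$ because they commute with $\delta_n$. Then $P=\bigoplus_{e<p}C_1x_n^e$, by the one-variable argument: for a single derivation with $\delta(x)=1$ and $\delta^p=0$, the element $\sum_j(-x)^j\delta^j(a)/j!$ is a constant, which lets one peel off powers of $x$.
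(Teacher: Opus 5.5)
Your argument is correct for the version of the statement that is actually true (see the second paragraph). It is a genuinely different route from the paper's, which gives no argument here and only cites Jacobson.

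\textbf{Comparison with the cited proof.} The heart of your hard direction is the normalization $\delta_i(x_j)=\delta_{ij}$. Evaluating on the $x_l$ forces $[\delta_i,\delta_j]=0$ and $\delta_i^p=0$. After that, everything reduces to iterating the one-variable Taylor identity $a=\sum_{e<p}\frac{x^e}{e!}\,\pi(\delta^e a)$ with $\pi=\sum_{j<p}\frac{(-x)^j}{j!}\delta^j$; this is the precise form of your ``peeling off''. Your induction on $n$ works because $\delta_1,\dots,\delta_{n-1}$ preserve $\ker\delta_n$ and $x_1,\dots,x_{n-1}\in\ker\delta_n$. The last step is even simpler than a dimension count: any $D\in\mathfrak{D}_{C}(P)$ equals $\sum_i D(x_i)\delta_i$, since both kill $C$ and agree on the generators $x_i$.

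The classical proof in Jacobson instead bounds $[P:C(\mathfrak{D})]$ using the associative ring of operators generated by $P$ and $\mathfrak{D}$. This ring is spanned over $P$ by the monomials $\delta_1^{e_1}\cdots\delta_n^{e_n}$ with $e_i<p$, and the density theorem identifies it with $\mathrm{End}_{C(\mathfrak{D})}(P)$. That route gives an extra description of $\mathrm{End}_{C}(P)$. Yours is more elementary and produces an explicit $p$-basis of $P$ over $C(\mathfrak{D})$ dual to a basis of $\mathfrak{D}$.

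\textbf{Hypotheses you should state.} The printed statement omits two hypotheses that are indispensable, and you should make them explicit.
\begin{enumerate}
\item You rightly assume that $\mathfrak{D}$ is a $P$-subspace. Without this, $\mathfrak{D}\mapsto C(\mathfrak{D})$ is not injective: $k\,\partial_x\subset\Der{k(x)}$ is closed under brackets and $p$-th powers and has the same constants $k(x^p)$ as $k(x)\,\partial_x$.
\item Finiteness is needed, so your fallback of ``arguing via finite-dimensional subalgebras'' cannot work. Take $P=k(x_1,x_2,\dots)$ and let $\mathfrak{D}$ be the $P$-span of the $\partial_{x_i}$, i.e.\ the derivations killing almost all $x_j$. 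This is a restricted Lie subalgebra and a $P$-subspace with $C(\mathfrak{D})=P^p$, yet $\sum_i\partial_{x_i}\in\mathfrak{D}_{P^p}(P)\setminus\mathfrak{D}$.
\end{enumerate}

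The correct setting is $[P:\Phi]<\infty$ and $\dim_P\mathfrak{D}<\infty$, which is automatic when $[P:P^p]<\infty$. In that case $\mathfrak{D}_\Phi(P)$ is finite-dimensional, because the $p$-basis of $P/\Phi$ is finite. Restricting to $P$ finitely generated over $k$ is too narrow for this paper. The proof of Theorem~\ref{p=2,3} applies the proposition to $P=k((x,y))$, which is not finitely generated over $k$ but has $[P:P^p]=p^2$. Your argument covers that case verbatim, since it only uses $\dim_P\mathfrak{D}<\infty$.
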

\begin{proof}
See \cite[first paragraph on page 189]{Jacobson}.
\end{proof}

\subsection{Birational singularities of 1-foliations}
Let $\cF$ be a 1-foliation and $\Delta$ a $\QQ$-Weil divisor.
Assume that $(\cF,\Delta)$ is $\QQ$-Gorenstein (i.e. $K_{\cF}+\Delta$ is $\QQ$-Cartier). If $\pi:Y \to X$ is a birational proper $k$-morphism of normal varieties, we can write
$$K_{\pi^{*}\cF}+\pi_{*}^{-1}\Delta=\pi^{*}(K_{\cF}+\Delta)+\sum_{E} a(E;\cF,\Delta)E,$$
where $E$ runs through the $\pi$-exceptional prime divisors.

Since the discrepancy $a(E;\cF,\Delta)$ depends on whether the exceptional divisor is invariant for the pullback of $\cF$, we add a correction term $\epE$.

\begin{dfn}
Let $\cF$ be a 1-foliation on $X$.
\begin{enumerate}
\item  A prime divisor $E \subset X$ is called \textbf{$\cF$-invariant} if $\cF(I_E) \subseteq I_E$ at the generic point of $E$.
\item For any birational morphism $\pi:Y \to X$ with $Y$ normal and any prime divisor $E \subset Y$, we define 
$$\epE=
\begin{cases*}
0  & if $E$ is $\pi^*\cF$-invariant, \\
1  & otherwise.
\end{cases*}
$$
This does not depend on the choice of $\pi$.
\end{enumerate}
\end{dfn}

\begin{dfn}
Suppose $(\cF,\Delta)$ is $\QQ$-Gorenstein.
\begin{enumerate}
 \item $(\cF,\Delta)$ is \textbf{terminal} (resp. \textbf{canonical}) if $a(E;\cF,\Delta) >0$ (resp. $a(E;\cF,\Delta) \geq 0$) for all exceptional prime divisors $E$ over $X$;
\item $(\cF,\Delta)$ is \textbf{klt}  if $\lfloor \Delta \rfloor =0$ and $a(E;\cF,\Delta) > -\epE$ for all exceptional $E$ over $X$;
\item $(\cF,\Delta)$ is \textbf{log canonical} (\textbf{lc}) if $a(E;\cF,\Delta) \geq - \epE$ for all exceptional $E$ over $X$.
\end{enumerate}
We say that $\cF$ is terminal (resp. canonical, klt, lc) if $(\cF,0)$ is terminal (resp. canonical, klt, lc).
\end{dfn}

For a 1-foliation $\cF$ on a regular variety, we can easily estimate the discrepancy $a(E;\cF)$ along the exceptional divisor $E$ of its blow-up.

\begin{lem}
\label{blow-up}
Let $X$ be a regular variety and $\cF$ an invertible 1-foliation on $X$ generated by $\delta$ at $q \in X$. Let $\pi:\tilde{X} \to X$ be the blow-up at $q$ and $E$ the unique exceptional divisor. 
If $\delta \in \fm^d\Der{\cO_{X,q}}$, then $a(E;\cF) \leq -d+1-\epE$.
\end{lem}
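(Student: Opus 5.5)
The plan is to compute locally and directly. The discrepancy $a(E;\cF)$ is determined at the generic point of $E$, so we may replace $X$ by $\mathrm{Spec}$ of the completion $\hat{\cO}_{X,q}\cong k[[x_1,\ldots,x_n]]$ (or work on a neighbourhood of $q$ where $\cF=\cO_X\delta$) and use the chart of Example \ref{pullback ex} with coordinates $y_1,t_2,\ldots,t_n$, in which $E=(y_1=0)$. Since $\cF$ is generated by $\delta$ near $q$, we have $K_{\cF}=0$ near $q$, so $\pi^*K_{\cF}=0$ there. It remains to find a local generator of $\pi^*\cF$ along $E$ and compare it with $\pi^*\delta$. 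Concretely, if $\pi^*\delta = y_1^{c}\,\delta'$ with $\delta'$ a derivation of $\tilde A$ not divisible by $y_1$ (i.e. $\delta'$ is saturated at the generic point of $E$), then $\pi^*\cF=\tilde{\cO}\delta'$ near the generic point of $E$. Hence $\cO(-K_{\pi^*\cF}) = \cO\,\delta' = \cO(-cE)\cdot\pi^*\delta$, which gives $K_{\pi^*\cF} = cE$ and $a(E;\cF)=c$ (the sign comes from $\det\pi^*\cF$ being generated by $y_1^{-c}\pi^*\delta$).

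Write $\delta=\sum_i f_i\partial_{x_i}$ with all $f_i\in\fm^d$. By formula \eqref{pullback formula},
\[
\pi^*\delta = \pi^*f_1\,\partial_{y_1} + \frac{1}{y_1}\sum_{i\ge2}\bigl(\pi^*f_i - t_i\,\pi^*f_1\bigr)\partial_{t_i}.
\]
Since $f_i\in\fm^d$, each $\pi^*f_i$ is divisible by $y_1^d$. Therefore $\pi^*\delta = y_1^{d-1}\bigl(y_1 g_1\partial_{y_1} + \sum_{i\ge2} h_i\partial_{t_i}\bigr)$ with $g_1,h_i\in\tilde A$. Thus $\pi^*\delta$ is divisible by $y_1^{d-1}$, so $c\ge d-1$ and $a(E;\cF)=-c\le -d+1$. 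This already gives the bound whenever $\epE=0$.

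The main point is the case $\epE=1$, i.e. $E$ is not $\pi^*\cF$-invariant, where one extra factor of $y_1$ has to be gained. Here $\delta'' := y_1g_1\partial_{y_1}+\sum h_i\partial_{t_i}$ satisfies $\delta''(y_1)=y_1g_1\in(y_1)$. If $\delta''$ were not divisible by $y_1$, it would generate $\pi^*\cF$ at the generic point of $E$, and then $\pi^*\cF(I_E)\subseteq I_E$, so $E$ would be invariant, a contradiction. Hence $y_1\mid\delta''$, so $c\ge d$ and $a(E;\cF)\le -d = -d+1-\epE$. Combining both cases gives $a(E;\cF)\le -d+1-\epE$.

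The only delicate points are the sign convention linking the divisibility exponent $c$ to $a(E;\cF)$ via $\cO(-K)\cong\det$, and the fact that passage to the completion, or to a single chart, suffices because $E$ meets that chart and discrepancies are computed at the generic point of $E$. I expect the sign check to be the main place to be careful.
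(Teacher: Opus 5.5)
Your proposal is correct and follows the paper's proof: the same chart of Example~\ref{pullback ex}, the same extraction of the factor $y_1^{d-1}$ from $\pi^*\delta$, and the same observation that non-invariance of $E$ forces one more factor of $y_1$, because the $\partial_{y_1}$-coefficient of the bracketed derivation already lies in $I_E$. The one slip is the sign in your first paragraph: since $\delta'=y_1^{-c}\pi^*\delta$ one has $\cO\delta'=\cO(cE)\cdot\pi^*\delta$, hence $K_{\pi^*\cF}=\pi^*K_{\cF}-cE$ and $a(E;\cF)=-c$ (matching the paper's $\pi^*K_{\cF}=K_{\pi^*\cF}+eE$), which is the sign you in fact use in the rest of the argument.
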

\begin{proof}
We may assume that $X=\hat{\mathbb{A}}^n_k$. In what follows, we use the notation of Example \ref{pullback ex}. Suppose that $\delta$ is written as
$$\delta=\sum_{i=1}^n f_i(\mathbf{x})\partial_{x_i},$$
with $f_i(\mathbf{x})=f_i(x_1,\ldots,x_n) \in \fm^d$.
Using the transformation rules \eqref{pullback formula}, we have
\begin{equation} \label{pi*}
\begin{split}
\pi^*\delta&=\tilde{f_1}(\mathbf{y})(\partial_{y_1}-\sum_{i=2}^n\frac{t_i}{y_1}\partial_{t_i})+\sum_{i=2}^n\tilde{f_i}(\mathbf{y}) \cdot\frac{1}{y_1}\partial_{t_i}\\
&=\tilde{f_1}(\mathbf{y})\partial_{y_1}+\sum_{i=2}^n\frac{1}{y_1}(\tilde{f_i}(\mathbf{y})-t_i\tilde{f_1}(\mathbf{y}))\partial_{t_i} \\
&=y_1^{d-1}\left[y_1 \cdot \frac{\tilde{f_1}(\mathbf{y})}{y_1^d}\partial_{y_1}+\sum_{i=2}^{n}\frac{\tilde{f_i}(\mathbf{y})-t_i\tilde{f_1}(\mathbf{y})}{y_1^d}\partial_{t_i}\right],
\end{split}
\end{equation}
where $\tilde{f_i}(\mathbf{y})=f_i(y_1,y_1t_2,\ldots,y_1t_n)$. Since $f_i(\mathbf{x}) \in \fm^d$, we have $y_1^{-d}\tilde{f_i}(\mathbf{y}) \in \tilde{A}$.

Let $I_E=(y_1)$ be the defining ideal of $E$ and write $\pi^*\delta=y_1^e\tilde{\delta}$, where $\tilde{\delta} \in \Der{\tilde{A}} \setminus I_E\Der{\tilde{A}}$. Then we can calculate 
$$\pi^*K_\cF=\text{div}(\pi^*\delta)=\text{div}(y_1^e\tilde{\delta})=K_{\pi^*\cF}+eE.$$
From the above calculation, we see that $a(E;\cF)=-e$.

If $E$ is $\pi^*\cF$-invariant, then $e \geq d-1=d-1+\epE$. If $E$ is not $\pi^*\cF$-invariant, then it follows that $\tilde{\delta}(y_1) \notin I_E$. This implies that the derivation in brackets must be divisible by $y_1$, since the coefficient of $\partial_{y_1}$ belongs to $I_E$. Hence we have $e \geq d=d-1+\epE$.
In both cases, we obtain
\begin{equation*}
    a(E;\cF)=-e \leq -d+1-\epE. \tag*{\qedhere}
\end{equation*}
\end{proof}

\section{Adjoint foliated structures and their quotients}
In this section, we introduce the adjoint foliated structures to characterize the 1-foliations whose quotients are klt or log canonical. From now on, we assume that $X$ is $\QQ$-Gorenstein.
\subsection{Adjoint foliated structures}
\begin{dfn} \textup{\cite[Definition 2.6,2.7]{Adjoint}} \label{adjoint}
Let $\cF$ be a foliation on $X$ and let $t \in [0,1]$.
We define the adjoint discrepancy by
$$a(E;X,\cF,t):=ta(E;\cF)+(1-t)a(E;X).$$
The foliation $\cF$ is \textbf{$t$-log canonical} (for short, \textbf{$t$-lc}) (resp. \textbf{$t$-klt}) if $a(E;X,\cF,t) \geq -t\epE-(1-t)$ (resp. $>-t\epE-(1-t)$) for any exceptional prime divisor $E$ over $X$.
\end{dfn}

\begin{lem} \label{t'-klt}
If $X$ is log canonical (resp. klt) and $\cF$ is $t_0$-lc (resp. $t_0$-klt), then  $\cF$ is $t$-lc (resp. $t$-klt) for any $t \leq t_0$.
\end{lem}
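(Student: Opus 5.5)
The claim follows from convexity: the inequality defining $t$-lc (resp. $t$-klt) is affine in $t$. For $t=0$ it is exactly $X$ being lc (resp. klt), and for $t=t_0$ it is the hypothesis. Any $t\in[0,t_0]$ is a convex combination of these two endpoints.

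Fix an exceptional prime divisor $E$ over $X$ and set
$$g(t):=a(E;X,\cF,t)+t\epE+(1-t)=t\bigl(a(E;\cF)+\epE\bigr)+(1-t)\bigl(a(E;X)+1\bigr).$$
By Definition \ref{adjoint}, $\cF$ is $t$-lc (resp. $t$-klt) exactly when $g(t)\ge 0$ (resp. $g(t)>0$) for every exceptional $E$. Both $a(E;\cF)$ and $a(E;X)$ are fixed numbers independent of $t$. This uses the standing assumption that $X$ is $\QQ$-Gorenstein, and that $K_\cF$ is $\QQ$-Cartier so that $a(E;\cF)$ is defined; the latter is implicit in $\cF$ being $t_0$-lc. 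Hence $g$ is an affine function of $t$.

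Next I would check the two endpoints.
\begin{enumerate}
\item At $t=0$ we have $g(0)=a(E;X)+1$. This is $\ge 0$ (resp. $>0$) because $X$ is lc (resp. klt).
\item At $t=t_0$, $g(t_0)\ge 0$ (resp. $>0$) by the hypothesis that $\cF$ is $t_0$-lc (resp. $t_0$-klt).
\end{enumerate}

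For $0\le t\le t_0$ with $t_0>0$, write $t=\lambda t_0$ with $\lambda=t/t_0\in[0,1]$. Affineness gives
$$g(t)=\lambda g(t_0)+(1-\lambda)g(0).$$
This is nonnegative (resp. positive) as a convex combination of nonnegative (resp. positive) numbers. If $t_0=0$, the only case is $t=0$, which is the hypothesis itself. Since $E$ was arbitrary, $\cF$ is $t$-lc (resp. $t$-klt).

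There is no real obstacle here. The only points needing care are that the discrepancies are well defined independently of $t$, which is the $\QQ$-Cartier issue above, and that the correction term $\epE$ depends only on $E$ and not on $t$, so it enters $g$ linearly.
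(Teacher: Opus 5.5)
Your proposal is correct and is essentially the paper's proof. The paper defines $F(t)=(a(E;\cF)-a(E;X)+\epE-1)t+a(E;X)+1$, which is exactly your $g(t)$. It then concludes from $F(0)\ge 0$ and $F(t_0)\ge 0$ (resp. $>0$) that $F$ is nonnegative (resp. positive) on $[0,t_0]$, since $F$ is affine in $t$.
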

\begin{proof}
Fix an arbitrary exceptional divisor $E$ over $X$. 
We define
$$F(t):=(a(E;\cF)-a(E;X)+\epE-1)t+a(E;X)+1.$$
Then the assumption implies $F(0) \geq 0$ (resp. $F(0) > 0$) and $F(t_0) \geq 0$ (resp. $F(t_0) > 0$). By the property of linear functions, we have $F(t)\geq0$ (resp. $F(t) >0$) for any $t \in [0,t_0]$. Thus $\cF$ is $t$-lc (resp. $t$-klt) for any $t \in [0,t_0]$.
\end{proof}

\subsection{Birational singularities of quotients}
We recall two well-known formulas for quotients.
\begin{lem} \textup{\cite[Proposition 1]{Insep}} \label{pullback E'}
Let $\cF$ be a 1-foliation on $X$ and $q:X \to Y$ its quotient. For a prime divisor $E \subset X$ with image $q(E)=E' \subset Y$:
\begin{enumerate}
\item if $E$ is $\cF$-invariant, then $q^*E'=E$;
\item if $E$ is not $\cF$-invariant, then $q^*E'=pE$.
\end{enumerate}
\end{lem}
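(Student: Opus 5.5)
The statement is local at the generic point of $E$, so I will work there. Let $R=\cO_{X,E}$, a DVR (since $X$ is normal) with uniformizer $t$ and fraction field $K=K(X)$. Then $R'=R\cap K^{\cF}=R^{\cF}$ is the local ring of $Y$ at the generic point of $E'$. By Lemma \ref{quotient prop} it is normal, and by Lemma \ref{homeo} it is one-dimensional local, so it is a DVR. The claim amounts to computing the ramification index $e=v_E(s)$ of a uniformizer $s$ of $R'$: $q^*E'=eE$. Since $R^p\subseteq R'\subseteq R$, we have $t^p\in R'$, hence $e\mid p$ and $e\in\{1,p\}$. It remains to decide which value occurs, using invariance.

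\textbf{Invariant case.} Suppose $\cF(I_E)\subseteq I_E$. I will show there is an $\cF$-invariant element of valuation $1$. The natural candidate is built from $t$ using the $p$-Lie structure: the $R$-module $\cF_E$ preserves $tR$, so $\cF_E$ induces derivations on the residue field $\kappa(E)=R/tR$. I would rather argue by contradiction. Suppose $e=p$. Then $v_E(R'\setminus\{0\})\subseteq p\mathbb{Z}$. Now use the degree computation $[K:K^{\cF}]=p^r$ with $r=\operatorname{rank}\cF$ (Proposition \ref{Jacobson}), together with the standard formula $[K:K^{\cF}]=e\cdot f$, where $f=[\kappa(E):\kappa(E')]$ (both rings are DVRs, $R$ is finite over $R'$, and the extension is purely inseparable so there is a single prime). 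If $e=p$, then $f=p^{r-1}$. On the other hand, the derivations of $\cF_E$ reduced mod $t$ give a $p$-Lie algebra of derivations of $\kappa(E)$ killing $\kappa(E')$. By invariance, $\cF_E\to \mathrm{Der}(\kappa(E))$ is well defined, and its image should have $\kappa(E)$-dimension $r$, because saturation of $\cF$ ensures $\cF_E$ is a direct summand of $T_{X,E}$ and restricts injectively modulo $t$ off the $\partial_t$ direction. Then Jacobson (Proposition \ref{Jacobson}) for $\kappa(E)$ gives $[\kappa(E):\kappa(E)^{\bar\cF}]=p^r$ with $\kappa(E')\subseteq\kappa(E)^{\bar\cF}$. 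Comparing this with $f$ is the delicate step: this dimension count is the main obstacle.

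\textbf{Simpler route.} A cleaner route avoids it. Choose a $p$-basis-adapted description: since $R$ is a DVR essentially of finite type over a perfect field, $K$ has a $p$-basis containing $t$, namely $t,u_2,\dots,u_n$ with the $u_i$ units lifting a $p$-basis of $\kappa(E)$. Then $T_{X,E}$ is free on the dual derivations $\partial_t,\partial_{u_i}$. Work in these coordinates. Invariance means every $\delta\in\cF_E$ has $\delta(t)\in tR$, so the $\partial_t$-coefficients all lie in $tR$. Non-invariance means some $\delta$ has $\delta(t)$ a unit.

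In the \textbf{non-invariant} case, pick $\delta\in\cF_E$ with $\delta(t)=1$ after scaling. If some $s\in R'$ had $v(s)=1$, write $s=t\cdot w$ with $w$ a unit and apply $\delta$. Reducing modulo $t$ gives $0=\delta(s)\equiv w\pmod t$, a contradiction. Hence $e=p$, i.e.\ $q^*E'=pE$.

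In the \textbf{invariant} case, I will use Jacobson's correspondence to show $t$ can be modified into $K^{\cF}$. Let $\Phi=K^{\cF}$, so $\cF_{(0)}=\mathfrak{D}_\Phi(K)$. I claim some $s=t\cdot(\text{unit})+(\text{higher})$ lies in $\Phi$. Choose a $p$-basis of $K$ over $\Phi$ consisting of elements of $R$ whose differentials, together with $dt$, remain independent mod $t$. Saturation and invariance imply the differentials $d\Phi$ cut out $\cF$, and $\Omega$ of $R$ modulo the forms vanishing on $\cF$ contains $dt$ mod $t$ nontrivially. Hence one can pick $s\in\Phi\cap R$ whose $ds$ has $dt$-component a unit, which forces $v(s)=1$. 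The key point to verify is that the annihilator of $\cF_E$ in $\Omega_{R}$ is generated by exact forms $ds$ with $s\in R'$, up to saturation; I expect this to be the technical heart. Granting it, we get $e=1$ and $q^*E'=E$.
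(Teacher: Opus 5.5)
The paper does not prove this lemma; it quotes it from \cite{Insep}. So your argument has to stand on its own. Three parts of it are correct: the reduction to the DVR $R=\cO_{X,E}$, the identification of $R'=R^{\cF}$ as a DVR with ramification index $e\in\{1,p\}$, and the non-invariant case.

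The gap is in the invariant case. The argument you finally commit to (the ``simpler route'') rests on a claim you explicitly grant: that the annihilator of $\cF_E$ in $\Omega_R$ is generated by the $ds$ with $s\in R'$. Its concluding inference is also wrong as stated. An $s\in R'$ whose $ds$ has unit $dt$-component need not satisfy $v(s)=1$ unless you also have $s\in tR$. For $\cF=\langle\partial_u\rangle$ on $k[t,u]$ with the invariant divisor $E=\{t=0\}$, the invariant element $s=1+t$ has $ds=dt$ but $v(s)=0$. You give no way of moving $s$ into $tR$ inside $R'$ while keeping the unit $dt$-component. As written, $e=1$ is therefore not established.

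Your first route already closes the case, and the step you call delicate is immediate.

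\begin{itemize}
\item Since $\kappa(E')\subseteq\kappa(E)^{\bar\cF}$, Jacobson's degree formula gives $f=[\kappa(E):\kappa(E')]\ge[\kappa(E):\kappa(E)^{\bar\cF}]=p^r$.
\item You also have $ef=[K:K^{\cF}]=p^r$. This holds because $R$ is finite over $R^p\subseteq R'$, as $X$ is of finite type over a perfect field.
\item Together these force $e=1$ directly, with no argument by contradiction.
\end{itemize}

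The dimension claim holds for the reason you indicate.

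\begin{itemize}
\item $T_{X,E}/\cF_E$ is finitely generated and torsion-free over a DVR, hence free. So $\cF_E\otimes\kappa(E)\to T_{X,E}\otimes\kappa(E)$ is injective.
\item Invariance means the $\partial_t$-coefficients lie in $tR$. This puts the image in the span of the classes of $\partial_{u_2},\dots,\partial_{u_n}$.
\item Reduction mod $t$ maps this span isomorphically onto $\mathrm{Der}_k(\kappa(E))$, because $\bar u_2,\dots,\bar u_n$ is a $p$-basis of $\kappa(E)$.
\item Reduction commutes with brackets and $p$-th powers, so $\bar\cF$ is an $r$-dimensional restricted Lie subalgebra.
\end{itemize}

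Both equalities $[K:K^{\cF}]=p^r$ and $[\kappa(E):\kappa(E)^{\bar\cF}]=p^r$ use Jacobson's degree formula $[P:C(\mathfrak{D})]=p^{\dim_P\mathfrak{D}}$. That formula is not part of Proposition \ref{Jacobson} as stated, so cite it separately.
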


\begin{lem} \textup{\cite[Propsition 1.1]{Ekedahl}} \label{adjunction}
Let $q:X \to X/\cF$ be the quotient by a 1-foliation $\cF$. Then we have the following equality of Weil divisors:
$$q^*K_{X/\cF}=K_X+(p-1)K_\cF.$$
\end{lem}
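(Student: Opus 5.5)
The plan is to prove the equality of Weil divisors on a big open subset, that is, one whose complement has codimension $\geq 2$. Since $X$ and $Y:=X/\cF$ are normal and $q$ is a homeomorphism (Lemma \ref{homeo}), Weil divisors and their pullbacks are determined in codimension one. I may therefore shrink $X$ to an open set $U$ with the following properties:
\begin{itemize}
\item[(a)] $U$ is regular;
\item[(b)] $\cF|_U$ is a subbundle of $T_U$, which holds in codimension one because $\cF$ is saturated, so $T_X/\cF$ is torsion free and hence locally free at codimension-one points of a regular scheme;
\item[(c)] $q(U)$ is regular.
\end{itemize}
On such $U$ both sides are Cartier, so it suffices to compare determinants of vector bundles.

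\textbf{Step 1: local normal form.} At a point of $U$, let $r$ be the rank of $\cF$. I would use the Jacobson correspondence (Proposition \ref{Jacobson}) together with its local version for smooth $p$-closed subbundles (the Frobenius theorem in characteristic $p$, as in Ekedahl). This should give, after completion, local coordinates $x_1,\ldots,x_n$ with
\[
\cF=\langle \partial_{x_1},\ldots,\partial_{x_r}\rangle, \qquad
A^{\cF}=k[[x_1^p,\ldots,x_r^p,x_{r+1},\ldots,x_n]].
\]
Lemma \ref{quotient prop} can be used to identify the invariant ring. In particular $q(U)$ is regular, so condition (c) holds automatically.

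\textbf{Step 2: exact sequence.} I would establish the exact sequence of locally free sheaves on $U$
\[
0\to \cF\to T_U\xrightarrow{\;dq\;} q^*T_{Y}\to F_U^*\cF\to 0,
\]
where $F_U$ is the absolute Frobenius. To check it, write $u_i=x_i^p$ and use the normal form from Step 1. The map $dq$ sends $\partial_{x_j}\mapsto\partial_{x_j}$ for $j>r$ and kills $\partial_{x_i}$ for $i\le r$. The cokernel is spanned by the $\partial_{u_i}$, $i\le r$, and it is identified canonically with $F_U^*\cF$ via $\partial_{x_i}\mapsto \partial_{u_i}$. The point to verify is that this identification is independent of the choice of coordinates, i.e. that the map is intrinsic. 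I expect it to be induced by $\delta\mapsto(\delta^{p}$-type$)$ or, dually, by the Cartier operator on $\Omega$. If the intrinsic construction proves awkward, it is enough to check that the transition functions of the cokernel are the $p$-th powers of those of $\cF$. That suffices for the determinant computation in Step 3.

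\textbf{Step 3: determinants.} Taking determinants in Step 2 gives
\[
q^*\det T_Y\otimes\det\cF\cong\det T_U\otimes(\det\cF)^{\otimes p}.
\]
Translating into divisors with $K=-c_1(T)$ and $K_\cF=-c_1(\det\cF)$, this reads
\[
-q^*K_Y-K_\cF=-K_X-pK_\cF,
\]
that is, $q^*K_Y=K_X+(p-1)K_\cF$ on $U$. This equality then extends to $X$ because both sides are Weil divisors determined in codimension one.

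The main obstacle is Step 1 together with the well-definedness of the last map in Step 2. One needs the existence of a $p$-basis adapted to $\cF$ at codimension-one points of $X$, which themselves need not lie on a regular neighbourhood of a closed point. For this I would work directly over the DVR $\cO_{X,\eta_E}$ with its residue field. There I would apply the structure theorem for restricted Lie subalgebras of $\Der{\cO}$ that are direct summands, following Jacobson's $p$-basis argument.
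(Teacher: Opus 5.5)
Your argument is correct and is essentially the standard proof of the formula the paper cites from Ekedahl; the paper gives no argument of its own. That proof works on the big open set where $X$ is smooth and $\cF$ is a subbundle: there $X/\cF$ is smooth, one has the four-term sequence $0\to\cF\to T_X\to q^*T_{X/\cF}\to F^*\cF\to 0$, and taking determinants and extending over codimension $\geq 2$ gives the claim. The coordinate-free identification you were unsure about is dual to the $\cO_X$-linear map $F^*\Omega_X\to q^*\Omega_{X/\cF}$, $df\otimes 1\mapsto d(f^p)$ (well defined because $f^p\in\cO_{X/\cF}$), which kills $df\otimes 1$ for $f\in\cO_{X/\cF}$ and so identifies $F^*\cF^\vee$ with $\ker(q^*\Omega_{X/\cF}\to\Omega_X)$. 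Your closing worry about codimension-one points is unnecessary: they all lie in the open set $U$, where the normal form at closed points already suffices.
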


\begin{rem}
If $X$ and $\cF$ are $\QQ$-Gorenstein, so is $X/\cF$ by \cite[Lemma 4.9]{PosvaA}.
\end{rem}

We discuss the correspondence between birational transformations of foliations and those of their quotients, and the relationship between their discrepancies.
\begin{prop}\textup{\cite[Theorem 2.10]{PosvaB}} \label{rel}
Let $\cF$ be a $\QQ$-Gorenstein 1-foliation on $X$ and $q:X \to Y$ its quotient.
\begin{enumerate}
\item For any proper birational morphism $\nu:X' \to X$ with $X'$ normal, there exists a proper birational morphism $\mu:Y' \to Y$ such that $Y'=X'/\nu^*\cF$ and diagram \eqref{diagram} below commutes. 
\begin{equation} \label{diagram}
\begin{tikzpicture}[auto,x=12mm,y=12mm,->,baseline=(current bounding box.center)]
\node (a) at (0,1) {$X'$}; \node (x) at (1,1) {$X$};
\node (b) at (0,0) {$Y'$}; \node (y) at (1,0) {$Y$};
\draw (a)-- node
{$\scriptstyle \nu$} (x);
\draw (x)-- node
{$\scriptstyle q$} (y);
\draw (a)-- node[swap] {$\scriptstyle q'$} (b);
\draw (b)-- node[swap] {$\scriptstyle \mu$} (y);
\end{tikzpicture}
\end{equation}
\item Conversely, for any proper birational morphism $\mu:Y' \to Y$ with $Y'$ normal, there exists a proper birational morphism $\nu:X' \to X$ such that $X'$ is normal and diagram \eqref{diagram} above commutes.
Moreover, in this setting, $q'$ is the quotient by $\nu^*\cF$.
\item If $E \subset X'$ is a $\nu$-exceptional divisor with image the $\mu$-exceptional divisor $E'=q'(E) \subset Y'$, then
\begin{equation} \label{discrepancy relation}
a(E';X/\cF)\delta(E)=(p-1)a(E;\cF)+a(E;X),
\end{equation}
where
$$\delta(E)=
\begin{cases*}
1  & if $E$ is $\nu^*\cF$-invariant,\\
p  & otherwise.
\end{cases*}
$$
\end{enumerate}
\end{prop}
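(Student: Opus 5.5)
Proposition \ref{rel} is cited from \cite[Theorem 2.10]{PosvaB}, so I only sketch the route I would take to reprove it, following the structure of the three parts.

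\textbf{Part (1).} I would define $Y'$ as the quotient $X'/\nu^*\cF$, glued from the rings of invariants of the saturated foliation $\nu^*\cF$. The map $q\circ\nu$ is killed by $\nu^*\cF$ on function fields, since $\nu^*\cF$ and $\cF$ agree at the generic point. Hence $q\circ\nu$ factors through $q'$, giving $\mu:Y'\to Y$.

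By Lemma \ref{quotient prop}, $K(Y')=K(X')^{\nu^*\cF}=K(X)^{\cF}=K(Y)$, so $\mu$ is birational. Properness of $\mu$ follows from properness of $\mu\circ q'=q\circ\nu$ together with surjectivity of $q'$ (Lemma \ref{homeo}). Separatedness of $\mu$ also needs a word, and is obtained the same way.

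\textbf{Part (2).} Given $\mu:Y'\to Y$, I would let $X'$ be the normalization of $Y'$ in $K(X)$. This is finite over $Y'$ and proper birational over the normalization of $Y$ in $K(X)$, which is $X$. The Jacobson correspondence (Proposition \ref{Jacobson}) identifies $K(Y')=K(X)^{\cF_{\eta}}$.

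Since $X'$ is normal, the ring of $\nu^*\cF$-invariants is the integral closure of $\cO_{Y'}$ in $K(Y')$, by Lemma \ref{quotient prop}(2)–(3). As $Y'$ is normal, this equals $\cO_{Y'}$. So $q'$ is the quotient by $\nu^*\cF$.

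\textbf{Part (3).} This is the heart of the argument. I would pull back the defining equality of $K_{Y'}$ along $q'$:
\[
K_{Y'}=\mu^*K_Y+\sum a(E';Y)E'.
\]
By Lemma \ref{adjunction} on $X'$ and $X$,
\[
q'^*K_{Y'}=K_{X'}+(p-1)K_{\nu^*\cF}, \qquad \nu^*q^*K_Y=\nu^*K_X+(p-1)\nu^*K_\cF,
\]
the latter using commutativity of \eqref{diagram}. Subtracting and comparing coefficients at $E$, the left side contributes $a(E';Y)\cdot(\text{coefficient of }E\text{ in }q'^*E')$. The right side gives $a(E;X)+(p-1)a(E;\cF)$.

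By Lemma \ref{pullback E'} applied to $q'$, that coefficient is $1$ or $p$ according to invariance, i.e.\ it equals $\delta(E)$. This yields \eqref{discrepancy relation}.

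\textbf{Main obstacle.} The delicate point is justifying the pullbacks of $\QQ$-Cartier Weil divisors under the finite purely inseparable map $q'$. Lemmas \ref{adjunction} and \ref{pullback E'} hold at codimension-one points, so I would work at the generic point of $E$, where everything is a DVR. I would then use that pullback of $\QQ$-Cartier divisors commutes with composition to avoid global issues.
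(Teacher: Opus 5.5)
Your proposal is correct and follows essentially the same route as the paper. In (1) you use the local factorization $f(A^{\cF})\subseteq B\cap K(X')^{\nu^*\cF}=B^{\nu^*\cF}$; in (2) your normalization of $Y'$ in $K(X)$ coincides with the paper's normalization of $X\times_Y Y'$; and in (3) you make the same coefficient comparison at $E$, using Lemma \ref{adjunction}, the discrepancy formulas and $q'^*E'=\delta(E)E$ from Lemma \ref{pullback E'}. Beyond that, your integral-closure argument for $B^{\nu^*\cF}=\cO_{Y'}$ completes a step the paper checks only on function fields, and your remarks on the birationality and properness of $\mu$ supply details the paper omits.
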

\begin{proof}
(1) Let $U=\text{Spec}\;A \subseteq X$ and $V=\text{Spec}\;B \subseteq X'$ be affine open subsets such that $V \subseteq\nu^{-1}(U)$. We denote by $f:A \to B$ the homomorphism induced by $\nu$. By the definition of pullback foliation, the stalk of $\nu^*\cF$ at $\eta_Y$ is given by
$$(\nu^*\cF)_{(0)}=\{\bar{f}\delta\bar{f}^{-1} \in \Der{Q(B)} \mid \delta \in \cF_{(0)}\},$$
where $\bar{f}:Q(A) \to Q(B)$ is the isomorphism induced by $f$.
For any $x \in A^{\cF}$ and $\delta \in \cF_{(0)}$, we have
$$(\bar{f}\delta\bar{f}^{-1})(f(x))=\bar{f}\delta(x)=0.$$
Thus $f(x) \in \Frac(B)^{\nu^*\cF} \cap B=B^{\nu^*\cF}$ by (2) of Lemma \ref{quotient prop}. Thus we define the homomorphism
$$g:A^{\cF} \to B^{\nu^*\cF};\;\; x \mapsto f(x).$$
Gluing the morphisms induced by these homomorphisms, we obtain the desired morphism $\mu:X'/\nu^*\cF \to X/\cF$.

(2) Let $\mu:Y' \to Y$ be a proper birational morphism with $Y'$ normal. We consider the normalization of the fiber product $X \times_Y Y'$. Let $p_1:X \times_Y Y' \to X$ and $p_2:X \times_Y Y' \to Y'$ be projection morphisms and $\nu_0:X' \to X \times_Y Y'$ the normalization. 

Since $\mu$ is proper, so is $p_1$. 
Since $q$ is a universal homeomorphism, $p_2$ is a homeomorphism.
In addition, the normalization $\nu_0$ is finite and a homeomorphism.
Let $\nu \coloneq p_2 \circ \nu_0$ and $q' \coloneq p_1 \circ \nu_0$. Then $\nu$ is a proper birational morphism and $q'$ is a purely inseparable morphism. 

We will show that the morphism $q':X' \to Y'$ is the quotient of $\nu^*\cF$. By Proposition \ref{Jacobson}, it is sufficient to show that 
\begin{equation} \label{KY}
    K(Y')=K(X')^{(\nu^*\cF)_{\eta_{X'}}}.
\end{equation}
By definition, the pullback of $\cF$ by $\nu$ is given by  
$$(\nu^*\cF)_{\eta_{X'}} = \{\ \nu_{\eta}\circ\delta\circ\nu_{\eta}^{-1} \mid \delta \in \cF_{\eta_X} \} \subseteq \Der{K(X')},$$
where $\nu_{\eta}:K(X) \to K(X')$ is the isomorphism between the function fields induced by $\nu$.
Then, for $f \in K(X')$, we have
\begin{align*}
f \in K(Y') &\iff \nu_{\eta}^{-1}(f) \in K(Y)=K(X')^{\cF_{\eta_X}} \\
&\iff  \delta(\nu_{\eta}^{-1}(f))=0 \;\; \forall\delta \in \cF_{\eta_X}\\
&\iff  (\nu_{\eta}\circ\delta\circ\nu_{\eta}^{-1})(f)=0 \;\; \forall\delta \in \cF_{\eta_X} \\
&\iff f \in K(X')^{(\nu^*\cF)_{\eta_{X'}}}.
\end{align*}
Thus we obtain equation \eqref{KY}.

(3) Using the adjunction formula (Lemma \ref{adjunction}), we have
\begin{align}
K_X&=q^*K_Y-(p-1)K_{\cF}, \label{ad1}\\
K_{X'}&=q'^*K_{Y'}-(p-1)K_{\nu^*\cF}. \label{ad2}
\end{align}
In addition, by the definition of discrepancies, the following equalities hold:
\begin{align}
K_{Y'}&=\mu^*K_Y+\sum a(E';Y)E', \label{disc1}\\
K_{X'}&=\nu^*K_X+\sum a(E;X)E, \label{disc2}\\
\nu^*K_{\cF}&=K_{\nu^*\cF}-\sum a(E;\cF)E. \label{folidisc}
\end{align}
Substituting \eqref{disc2} into equation \eqref{ad2}, we obtain
\begin{equation} \label{KX'a}
\begin{split}
K_{X'}&=q'^*(\mu^*K_Y+\sum a(E';Y)E')-(p-1)K_{\nu^*\cF} \\
&=q'^*\mu^*K_Y-(p-1)K_{\nu^*\cF}+\sum a(E';Y)\delta(E)E.
\end{split}
\end{equation}
Here, we used $q'^*E'=\delta(E)E$, which follows from Lemma \ref{pullback E'}.
Moreover, substituting \eqref{ad1} and \eqref{folidisc} successively into equation \eqref{disc1}, we have
\begin{equation} \label{KX'b}
\begin{split}
K_{X'}&=\nu^*(q^*K_Y-(p-1)K_\cF)+\sum a(E;X)E \\
&=\nu^*q^*K_Y-(p-1)\nu^*K_\cF+\sum a(E;X)E \\
&=\nu^*q^*K_Y-(p-1)(K_{\nu^*\cF}-\sum a(E;\cF)E)+\sum a(E;X)E \\
&=\nu^*q^*K_Y-(p-1)K_{\nu^*\cF}+\sum ((p-1)a(E;\cF)+a(E;X))E.
\end{split}
\end{equation}
Since $q'\mu=\nu q$, we can compare the coefficient of $E$ in \eqref{KX'a} and \eqref{KX'b} to obtain
$$a(E';Y)\delta(E)=(p-1)a(E;\cF)+a(E;X).$$
\end{proof}

Using the commutative diagram and the formula in Proposition \ref{rel}, we obtain the following main theorem.

\begin{thm} \label{(p-1)/p}
Let $\cF$ be a $\QQ$-Gorenstein 1-foliation on $X$. Then $\cF$ is $\frac{p-1}{p}$-log canonical (resp. $\frac{p-1}{p}$-klt) \\ if and only if $X/\cF$ is log canonical (resp. klt).
\end{thm}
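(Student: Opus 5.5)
The plan is to compare the two discrepancy conditions divisor by divisor, using the relation \eqref{discrepancy relation} of Proposition \ref{rel}, and to use parts (1) and (2) of that proposition to match exceptional divisors over $X$ with exceptional divisors over $Y=X/\cF$.

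First, the algebraic identity. With $t=\frac{p-1}{p}$ the adjoint discrepancy is
$$a(E;X,\cF,\tfrac{p-1}{p})=\tfrac{1}{p}\bigl((p-1)a(E;\cF)+a(E;X)\bigr)=\tfrac{\delta(E)}{p}\,a(E';Y)$$
by \eqref{discrepancy relation}. The threshold is $-t\epE-(1-t)=-\frac{(p-1)\epE+1}{p}$. If $E$ is $\nu^*\cF$-invariant, then $\epE=0$ and $\delta(E)=1$, so the threshold is $-\frac1p$. If $E$ is not invariant, then $\epE=1$ and $\delta(E)=p$, so the threshold is $-1$. In both cases it equals $-\frac{\delta(E)}{p}$. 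Hence
$$a(E;X,\cF,\tfrac{p-1}{p})\geq -t\epE-(1-t)\iff \tfrac{\delta(E)}{p}a(E';Y)\geq -\tfrac{\delta(E)}{p}\iff a(E';Y)\geq -1,$$
and the same holds with strict inequalities, since $\delta(E)/p>0$. So the $\frac{p-1}{p}$-lc (resp. klt) condition at $E$ is exactly the lc (resp. klt) condition at $E'$.

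Second, the correspondence of divisors. Given an exceptional $E$ over $X$, realize it on some normal $X'$ with $\nu:X'\to X$ proper birational. Part (1) of Proposition \ref{rel} produces $\mu:Y'\to Y$ with $Y'=X'/\nu^*\cF$, and $E'=q'(E)$ is a prime divisor on $Y'$. Since $q,q'$ are homeomorphisms and the diagram commutes, $E'$ is $\mu$-exceptional exactly when $E$ is $\nu$-exceptional: $\mu(E')=q(\nu(E))$ has the same codimension as $\nu(E)$. Conversely, any exceptional $E'$ over $Y$ lives on some $Y'$, and part (2) gives $X'$ over $X$ whose divisor $E=q'^{-1}(E')$ is $\nu$-exceptional. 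The discrepancy relation then applies to both directions, so the conditions on all $E$ over $X$ and on all $E'$ over $Y$ are equivalent. $\QQ$-Gorensteinness of $Y$ comes from the remark after Lemma \ref{adjunction}.

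The main obstacle is this bookkeeping. I must check that the map $E\mapsto E'$ is well defined, i.e.\ independent of the model, since discrepancies depend only on the valuation. I must also check that exceptionality is preserved in both directions. The homeomorphism property (Lemma \ref{homeo}) together with the commutativity of \eqref{diagram} should handle both points. A separate minor point: klt for $Y$ requires the boundary condition $\lfloor\Delta\rfloor=0$, which is trivial here since there is no boundary.
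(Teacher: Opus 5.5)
Your proposal is correct and is essentially the paper's proof. You rewrite the threshold as $-\delta(E)/p$ using $\delta(E)=(p-1)\epE+1$, divide \eqref{discrepancy relation} by $p$ to get $a(E;X,\cF,\frac{p-1}{p})=\frac{\delta(E)}{p}a(E';X/\cF)$, and use parts (1) and (2) of Proposition \ref{rel} to pass between exceptional divisors over $X$ and over $X/\cF$. The paper leaves your extra check that exceptionality is preserved implicit, and the model-independence of $E\mapsto E'$ that you flag is not needed, since each direction of the equivalence only requires producing some matching divisor.
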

\begin{proof}
By (1) and (2) of Proposition \ref{rel}, any proper birational morphism $\nu:X' \to X$ induces a proper birational morphism $\mu:Y' \to Y$ making diagram \eqref{diagram} commute, and conversely, any morphism $\mu$ induces a morphism $\nu$.
Let $E \subset X'$ be a $\nu$-exceptional divisor and let $E'=q'(E) \subset Y'$ be the $\mu$-exceptional divisor corresponding to $E$. 
Note that
\begin{equation} \label{delta}
   \delta(E)=(p-1)\epE+1.
\end{equation}
Using \eqref{discrepancy relation} and \eqref{delta}, we have
\begin{align*}
    &\frac{p-1}{p}a(E;\cF)+\frac{1}{p}a(E;X) \geq -\frac{p-1}{p}\epE-\frac{1}{p} \\
    &\iff \frac{1}{p}a(E';X/\cF)\delta(E) \geq -\frac{1}{p}\delta(E) \\
    &\iff a(E';X/\cF) \geq -1.
\end{align*}
Thus $\cF$ is $\frac{p-1}{p}$-log canonical if and only if $X/\cF$ is log canonical.
Similarly for the equivalence between klt properties.
\end{proof}

\section{Classification of klt surface quotients}

In this section, we classify the klt quotients by 1-foliations on regular surfaces. 

\subsection{Some auxiliary results}
Before the classification, we introduce some useful propositions.

\begin{prop} \textup{\cite[Proposition 2.28]{Kollar}} \label{rational} Let $X$ be a normal surface and let $\Delta$ be an effective $\mathbb{R}$-divisor. If $(X,\Delta)$ is klt, then $X$ has at worst rational singularities.
\end{prop}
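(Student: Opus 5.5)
The plan is to reduce to the case $\Delta=0$ and then use the classical argument that klt surface singularities are rational, via the minimal resolution and the Grauert--Riemenschneider type vanishing available for surfaces in any characteristic.

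\emph{Reduction and local setup.} Rationality is a local property, so we fix a closed point $x\in X$ and work on a neighbourhood of it. Let $f:Y\to X$ be the minimal resolution, which exists for normal surfaces in any characteristic (Lipman), and let $E=\bigcup E_i$ be its exceptional curves. Write
\[
K_Y+f^{-1}_*\Delta=f^*(K_X+\Delta)+\sum a_iE_i .
\]
Here $f^*$ is taken in the sense of Mumford's numerical pullback, so that $K_X+\Delta$ need not be $\QQ$-Cartier. Since $Y$ is minimal over $X$, $K_Y$ is $f$-nef. Negativity of the intersection matrix then gives that $\sum a_iE_i-f^{-1}_*\Delta$ is $f$-nef up to the effective part. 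More precisely, since $\Delta\ge 0$, the divisor $\sum (-a_i)E_i$ is effective. The klt hypothesis gives $-1<a_i\le 0$, so $\lfloor \sum(-a_i)E_i\rfloor=0$.

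\emph{Key step: $R^1f_*\cO_Y=0$.} We show that every effective cycle $Z>0$ supported on $E$ satisfies $h^1(\cO_Z)=0$. Equivalently, following Artin, it suffices to show $p_a(Z)\le 0$ for all such $Z$. By adjunction, $2p_a(Z)-2=Z\cdot(Z+K_Y)$. Write $K_Y\equiv_f \sum a_iE_i-f^{-1}_*\Delta$. The main obstacle is controlling $Z\cdot K_Y$: the term $-Z\cdot f^{-1}_*\Delta\le 0$ helps, so we must show $Z^2+Z\cdot\sum a_iE_i<0$. To do this, set $B=\sum(-a_i)E_i$ with coefficients in $[0,1)$ and argue as in Koll\'ar--Mori. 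Consider the $f$-nef, $f$-exceptional $\QQ$-divisor $L:=K_Y+B+f^{-1}_*\Delta\equiv_f 0$. Then apply the vanishing $R^1f_*\cO_Y(\lceil L-(K_Y+B)\rceil+K_Y)=0$, i.e.\ the relative Kawamata--Viehweg vanishing for birational morphisms of surfaces, which holds in every characteristic (Koll\'ar, Theorem 10.4 / Tanaka).

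Concretely, take $N=\lceil -B\rceil=0$ and note that $-B - (K_Y)\cdot$ is not what we want; instead we directly apply the surface vanishing to $D=0$: since $0-(K_Y+B+f^{-1}_*\Delta)\equiv_f 0$ is $f$-nef and the boundary $B+f^{-1}_*\Delta$ restricted to a neighbourhood of $E$ has exceptional coefficients $<1$, the vanishing yields $R^1f_*\cO_Y=0$. If one prefers to avoid citing the vanishing theorem, the fallback is the numerical route. For a minimal counterexample $Z$ with $p_a(Z)>0$, choose by Artin's argument a component $E_j\le Z$ with $(Z-E_j)\cdot E_j\ge\dots$ and derive a contradiction from $a_j>-1$ using $E_j^2+K_Y\cdot E_j\ge -2$.

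\emph{Conclusion.} Since $X$ is normal, $f_*\cO_Y=\cO_X$, and we have $R^1f_*\cO_Y=0$. Hence $x$ is a rational singularity, and so $X$ has at worst rational singularities.
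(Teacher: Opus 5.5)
Your argument is correct and is essentially the proof behind the citation. The paper does not reprove this statement; it quotes \cite[Proposition 2.28]{Kollar}, whose proof runs the same way: take the minimal resolution, use negativity to write $K_Y+f^{-1}_*\Delta+B\equiv_f 0$ with $B$ exceptional and $\lfloor B\rfloor=0$, then apply the characteristic-free relative vanishing for birational morphisms of regular surfaces to $\cO_Y$ to get $R^1f_*\cO_Y=0$. The Artin-criterion detour, the description of $L$ as ``$f$-exceptional'', and the garbled $\lceil L-(K_Y+B)\rceil+K_Y$ line are unnecessary and partly wrong, so you should delete them and keep only the direct application of vanishing.
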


\begin{prop} \textup{\cite[Proposition 3.6]{PosvaA}} \label{nilpotence}
Let $x \in X$ be a closed point of a regular variety and $\cF$ an invertible 1-foliation on $X$ generated by $\delta$ at $x$. Then:
\begin{enumerate}
\item if $\delta \notin \fm \Der{\cO_{X,x}}$, then $\cF$ is canonical at $x \in X$;
\item if $\delta \in \fm \Der{\cO_{X,x}}$, then $\cF$ is log canonical at $x\in X$ if and only if the induced endomorphism $\delta|_{\fm/\fm^2}$ is not nilpotent.
\end{enumerate}
\end{prop}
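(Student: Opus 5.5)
This is a result of Posva that the paper cites, so I would reconstruct the argument with the tools available here, in particular Lemma \ref{blow-up}. Everything is local at $x$, so I would pass to $\hat{\cO}_{X,x}=k[[x_1,\ldots,x_n]]$ and write $\delta=\sum f_i\partial_{x_i}$. Since $\cF$ is invertible, $K_\cF$ is Cartier. For any proper birational $\pi:Y\to X$ with $Y$ normal and any exceptional $E$, write $\pi^*\delta=h\tilde\delta$ near the generic point of $E$, where $\tilde\delta$ is a local generator of $\pi^*\cF$. Then $a(E;\cF)=-\mathrm{ord}_E(h)$. The strategy is to control $\mathrm{ord}_E(h)$ by testing $\pi^*\delta$ on functions.

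For (1), the claim is that $\mathrm{ord}_E(h)\le 0$ for every $E$. Since $\delta\notin\fm\Der{\cO_{X,x}}$, after a coordinate change (a Frobenius-twisted straightening is not needed) some $f_i$ is a unit, say $f_1$. Then $\delta(x_1)=f_1$ is a unit at $x$, hence $\pi^*\delta(\pi^*x_1)$ is a unit along $E$. Take a uniformizer $z$ of $\cO_{Y,E}$ and extend it to a derivation basis $\partial_z,\ldots$ of $\cO_{Y,E}$. Every element of $\Der{\cO_{Y,E}}$ maps $\cO_{Y,E}$ into itself, so $h\tilde\delta(\pi^*x_1)$ being a unit forces $\mathrm{ord}_E(h)\le 0$. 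On the other hand $h\in\cO_{Y,E}$, because $\pi^*\delta$ is regular on $Y$ wherever $\pi$ is (derivations of $\cO_X$ extend to $\cO_Y$ at codimension-one points after saturating). Hence $a(E;\cF)=0\ge0$, and $\cF$ is canonical.

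For (2), I would split into two directions. Suppose first that $\delta|_{\fm/\fm^2}$ is nilpotent. I would aim for a sequence of point blow-ups producing a divisor with $a(E;\cF)<-\epE$. When $\delta\in\fm^2\Der{}$, Lemma \ref{blow-up} with $d=2$ already gives $a(E;\cF)\le -1-\epE$, so $\cF$ is not lc. In general the linear part is a nilpotent matrix $N\ne0$. Here I would use weighted blow-ups adapted to a Jordan basis, or an iteration of ordinary blow-ups at the singular point of $\pi^*\cF$ on $E$, tracking that the linear part stays nilpotent while the order of vanishing of the coefficients relative to $E$ increases. The target is a weight vector $w$ with $\delta$ of weighted degree $\ge 1$ in the pulled-back sense, so that the exceptional divisor $E_w$ is invariant with $a(E_w;\cF)<0$, or non-invariant with $a<-1$. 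For a nilpotent Jordan block $x_2\partial_{x_1}$-type term, the weights $w_1>w_2$ make that term have weighted degree $w_2-w_1<0$, which goes in the wrong direction. So instead I would choose weights with $w_1<w_2$ chosen so that all terms of $\delta$ have weighted degree $\ge 1$ in the appropriate normalization.

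Conversely, suppose the linear part $L$ is not nilpotent. I would use the $p$-closedness ($\delta^p=\alpha\delta$, with $\alpha(x)\ne0$ after replacing by the semisimple part) to diagonalize $\delta$ formally, in the spirit of multiplicative derivations: $\delta=\sum\lambda_ix_i\partial_{x_i}$ with some $\lambda_i\ne0$ in $\mathbb{F}_p$. For such a toric-type generator, the discrepancies along toric valuations are computable and are $\ge-\epE$. A general valuation is then handled by comparing it with its monomial valuation, as in the characteristic $0$ statement that nondegenerate reduced singularities are lc.

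I expect the main obstacle to be the nilpotent direction in (2) when $L\neq0$: finding the right weighted blow-up and checking the invariance of its exceptional divisor. The second obstacle is the reduction to the diagonal form when $\delta$ is not $p$-closed. There I would pass to the $p$-closure of the foliation, which has the same $\cF$, since a rank-one 1-foliation is generated by a $p$-closed $\delta$ up to a unit.
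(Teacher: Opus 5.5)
The paper gives no proof of this statement (it is quoted from Posva), so I judge your argument on its own. Your overall plan is sound, but the lc direction of (2) is not actually proved, and two other steps need repair.

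\textbf{Smaller points.} In (1), the claim $h\in\cO_{Y,E}$ is false, because vector fields acquire poles under blow-up. By \eqref{pullback formula}, $\pi^*\partial_{x_1}=y_1^{-1}\bigl(y_1\partial_{y_1}-\sum_i t_i\partial_{t_i}\bigr)$, so $h=y_1^{-1}$ and $a(E;\cF)=1$ for $\delta=\partial_{x_1}$. Your test-function bound $\mathrm{ord}_E(h)\le 0$ is all that canonicity needs, so simply drop the second half of the argument.

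In the nilpotent direction of (2) your idea is right, but you never produce the weights or the discrepancy estimate. Take a Jordan basis of $\fm/\fm^2$ with chains $\bar x_{i_1}\mapsto\cdots\mapsto\bar x_{i_r}\mapsto 0$, and set $w_{i_j}=m+j-1$ with $m$ the maximal chain length.
\begin{itemize}
\item All weights lie in $[m,2m-1]$, so $\delta(x_i)\in I_{w_i+1}$ for every $i$, with $I_n$ as in the proof of Theorem \ref{4/5-klt}. In dimension $2$ these are exactly its weights $(2,3)$.
\item Hence $v_w(\delta f)\ge v_w(f)+1$ for all $f\in K(X)$. Equivalently, $z^{-1}\pi^*\delta$ is logarithmic along the divisor $E_w$ of $v_w$, where $z$ is a uniformizer of $\cO_{Y,E_w}$.
\item This forces $\mathrm{ord}_{E_w}(h)\ge 1+\epsilon_{\cF}(E_w)$, i.e.\ $a(E_w;\cF)\le -1-\epsilon_{\cF}(E_w)$.
\end{itemize}
This holds whether or not $E_w$ is invariant, so there is nothing to check about invariance.

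\textbf{The genuine gap: non-nilpotent $L=\delta|_{\fm/\fm^2}$ implies lc.}

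First, ``replacing by the semisimple part'' is illegitimate (it changes $\cF$) and unnecessary. Every local generator of an invertible 1-foliation is $p$-closed. The relation $\delta^p=\alpha\delta$ induces $L^p=\alpha(x)L$, so non-nilpotence of $L$ forces $\alpha$ to be a unit, i.e.\ $\delta$ is multiplicative.

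Second, and decisively, ``comparing a general valuation with its monomial valuation'' is not an argument. Log canonicity constrains \emph{every} divisorial valuation over $x$, most of which are monomial in no coordinates, and you give no mechanism by which the toric computation controls them.

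What is missing is a valuative test that handles all $E$ at once. Write $\pi^*\delta=h\tilde\delta$ with $e=\mathrm{ord}_E(h)$. Since $\tilde\delta$ is regular, and logarithmic when $E$ is invariant, one has $v_E(\pi^*\delta(f))\ge v_E(f)+e-\epE$ for all $f$. Write $\tilde\delta^p=\beta\tilde\delta$ with $\beta\in\cO_{Y,E}$. Hochschild's formula $(h\tilde\delta)^p=h^p\tilde\delta^p+(h\tilde\delta)^{p-1}(h)\,\tilde\delta$ then gives $\alpha h=h^p\beta+(h\tilde\delta)^{p-1}(h)$.
\begin{itemize}
\item The left side has valuation $e$, since $\alpha$ is a unit at the generic point of the center of $E$.
\item The right side has valuation at least $\min\bigl(pe,\;e+(p-1)(e-\epE)\bigr)=pe-(p-1)\epE$.
\end{itemize}
Hence $e\le\epE$, i.e.\ $a(E;\cF)\ge-\epE$ for every $E$ whose center contains $x$, with no diagonalization needed.

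Alternatively, once you have $\delta=u\sum\lambda_iy_i\partial_{y_i}$ with $\lambda_1\neq0$, the single test function $y_1$ gives $v_E(\delta y_1)=v_E(y_1)$, hence $e\le\epE$. This only covers divisors centered at $x$, and it requires you to prove the diagonalization and to extend $v_E$ to $\widehat{\cO}_{X,x}$.
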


In what follows, unless otherwise stated, let $S$ be a regular surface over $k$.

\begin{thm} \textup{\cite[Theorem 4.19]{PosvaA}} \label{Posva}
Let $\cF$ be a 1-foliation of rank 1 on $S$. Then $S/\cF$ is F-regular if and only if $\cF$ is log canonical.
\end{thm}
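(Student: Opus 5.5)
The statement is Posva's theorem (Theorem \ref{Posva}). I would prove it by working locally at a closed point $x\in S$, splitting according to Proposition \ref{nilpotence}, and then identifying the quotient explicitly in each case.

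Since $S$ is regular of dimension $2$ and $\cF$ is saturated of rank $1$, $\cF$ is reflexive and hence invertible. At $x$ it is therefore generated by a single $p$-closed derivation $\delta$ of $\cO_{S,x}$, and by completing we may assume $\cO=k[[x,y]]$. Both F-regularity of $S/\cF$ and log canonicity of $\cF$ are local and compatible with completion, because the quotient commutes with flat completion. So it suffices to treat the local ring $k[[x,y]]^{\delta}$.

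\textbf{Case 1: $\delta\notin\fm\Der{\cO}$.} By Proposition \ref{nilpotence}(1), $\cF$ is canonical and hence lc. I would then show that the quotient is regular, so trivially F-regular. Since $\delta$ is nonvanishing at $x$ and $p$-closed, the Frobenius-type normal form for a nonsingular 1-foliation gives coordinates with $\delta=u\,\partial_x$ for a unit $u$. Then $\cO^{\delta}=k[[x^p,y]]$, which is regular.

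\textbf{Case 2: $\delta\in\fm\Der{\cO}$.} The linear part $\delta|_{\fm/\fm^2}$ is an endomorphism $L$ of a $2$-dimensional space. By Lemma \ref{alpha} and $p$-closedness, $\delta^p=\alpha\delta$, and taking linear parts gives $L^p=\alpha(0)L$.
\begin{itemize}
\item If $L$ is not nilpotent, then $\alpha(0)\neq0$ and $\delta$ is multiplicative. After rescaling, $L$ is semisimple with eigenvalues in $\mathbb{F}_p$, not both zero. The standard linearization of multiplicative derivations (the action of $\mu_p$ is linearly reductive) then gives coordinates with $\delta=x\partial_x+\lambda y\partial_y$ with $\lambda\in\mathbb{F}_p$. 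The quotient is the invariant ring of a diagonal $\mu_p$-action, which is either regular (when $\lambda=0$) or the toric singularity $\frac1p(1,\lambda')$. Toric singularities are F-regular, being direct summands of polynomial rings. This gives the direction ``lc $\Rightarrow$ F-regular''.
\item If $L$ is nilpotent, Proposition \ref{nilpotence}(2) says $\cF$ is not lc, and we must show the quotient is not F-regular. By Theorem \ref{(p-1)/p} applied with $t=(p-1)/p$, combined with Lemma \ref{t'-klt} (here $S$ is klt), it is not enough to conclude directly that the quotient fails to be lc. Instead I would argue directly. F-regular rings are strongly F-regular, hence klt, and in dimension $2$ they are quotient singularities by linearly reductive group schemes. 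Their local fundamental group-scheme is therefore linearly reductive. On the other hand, $\cO\to\cO^\delta$ is a nontrivial torsor away from $x$ under $\alpha_p$ (if $\delta$ is additive) or a twisted form. When $L$ is nilpotent but $\delta$ is multiplicative, the induced $\mu_p$-action has nonsemisimple linearization, which is impossible. In the additive case, $\alpha_p$ is not linearly reductive, so the cover contradicts the structure theory of F-regular surface singularities. An alternative route I would try first is Lemma \ref{blow-up}: blow up repeatedly, follow the pullback foliation, and exhibit a divisor $E'$ over $S/\cF$ with discrepancy at most $-1$ via \eqref{discrepancy relation}. This would show the quotient is not klt, hence not F-regular.
\end{itemize}

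The main obstacle is the nilpotent case. Producing an explicit non-klt, or at least non-F-regular, witness for every $p$ is hard: for small $p$, examples like $E_6^0$ show that the quotient can still be klt. So the argument must genuinely use F-regularity, for instance through the linear-reductivity characterization of F-regular surface singularities, rather than discrepancies alone.
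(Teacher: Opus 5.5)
The paper does not prove this statement; it is quoted from Posva, so your proposal has to stand on its own. Your direction ``$\cF$ lc $\Rightarrow$ $S/\cF$ F-regular'' is fine. It rests on the same inputs the paper uses later: Proposition \ref{nilpotence} and Hirokado's normal form, Theorem \ref{multi}. One small correction: $\lambda=0$ never occurs, since $x\partial_x$ does not generate a saturated subsheaf. The converse, however, is not established.

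First, your subcase ``$L$ nilpotent but $\delta$ multiplicative'' is empty. Indeed $L^2=0$ gives $\alpha(0)L=L^p=0$. If instead $L=0$, then $\delta\in\fm^2\Der{\cO}$ strictly raises orders, so $\delta^p=\alpha\delta$ with $\alpha$ a unit is impossible. So in the non-lc case one always has $\alpha\in\fm$, and the cases that actually need treatment are $\alpha=0$ and $0\neq\alpha\in\fm$. You do not address the second at all. There the quotient map off the closed point is a torsor under a non-constant height-one group scheme over the base, so no statement about finite group schemes over $k$ applies to it.

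Second, your additive-case argument proves too much. In your own Case 1, $\delta=\partial_x$ is additive, and $k[[x^p,y]]\subset k[[x,y]]$ is a nontrivial $\alpha_p$-torsor over a regular, hence F-regular, base. So ``$\alpha_p$ is not linearly reductive'' cannot by itself contradict F-regularity. Likewise, describing F-regular surface germs as linearly reductive quotients does not forbid other inseparable covers of them by regular germs. Any valid argument must use that $\delta$ vanishes at the point.

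One way to do this uses that F-regular rings are splinters, so $A=\cO^\delta\subset\cO$ would have to split.
\begin{itemize}
\item Consider $\Phi=\delta^{p-1}-\alpha\colon\cO\to A$. It is $A$-linear, and it lands in $A$ because Lemma \ref{alpha} gives $\delta\circ\Phi=0$.
\item $\Phi$ generates $\mathrm{Hom}_A(\cO,A)$, which is a rank-one reflexive, hence free, $\cO$-module. To see this, check at height-one primes. There saturation gives $\delta=u\delta'$ with $u$ a unit, $\delta'(z)=1$ and $\delta'^p=0$. Comparing top-order terms then yields $\Phi=\delta'^{p-1}\circ u^{p-1}$.
\item Hence every $A$-linear $\phi\colon\cO\to A$ has $\phi(1)=\delta^{p-1}(g)-\alpha g$ for some $g\in\cO$.
\item This element lies in $\fm$ whenever $\delta\in\fm\Der{\cO}$ and $\alpha\in\fm$, which is exactly the non-lc case. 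So $A$ is not a splinter, and therefore not F-regular.
\end{itemize}
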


\begin{thm} \textup{\cite[Theorem 2.3]{Hirokado}} \label{multi}
Let $\cF$ be a 1-foliation on $S$. If $\cF$ is generated by a multiplicative derivation at $q \in S$, then there exists a system of parameters $x,y \in \widehat{\cO}_{S,q}$ such that the singularity on the quotient surface $V=S/\cF$ at $\tilde{q}$ is expressed as
$$\widehat{\cO}_{V,\tilde{q}} \cong k[[x,y]]^\delta,\;\;\delta=x\dd{x}+\lambda y\dd{y},\;(\lambda=1,2,\cdots,p-1).$$
Moreover, for $x,y$ as above, the singularity of $V$ at $\tilde{q}$ is a toric singularity of type $\frac{1}{p}(1,\lambda)$:
$$k[[x,y]]^{\delta} = k[[x^iy^j \mid i,j \geq 0, \;\;i+\lambda j \equiv 0 \;\;\mathrm{mod}\;p]].$$
\end{thm}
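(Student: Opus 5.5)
The strategy is to normalize the multiplicative derivation so that $\delta^p=\delta$, diagonalize $\delta$ on the completed local ring, and then read off the invariant ring from eigenvalues of monomials. We work in $R=\widehat{\cO}_{S,q}\cong k[[u,v]]$ with $\delta^p=w\delta$, $w\in R^\times$, and assume $q$ is a singular point of the foliation, i.e. $\delta(\fm)\subseteq\fm$. Otherwise the quotient is regular and there is nothing to prove. Since $\cF$ is saturated, $\delta$ is not divisible by any non-unit, so $\delta$ has an isolated zero at $q$.

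\emph{Step 1 (normalization).} By Lemma \ref{alpha}, $\delta(w)=0$. Because $R$ is complete with algebraically closed residue field and $p-1$ is prime to $p$, Hensel's lemma gives a unit $c$ with $c^{p-1}=w^{-1}$. Applying $\delta$ gives $(p-1)c^{p-2}\delta(c)=\delta(w^{-1})=0$, hence $\delta(c)=0$. Then $(c\delta)^p=c^p\delta^p=c^pw\delta=c\delta$, and $c\delta$ generates the same foliation. So we may assume $\delta^p=\delta$.

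\emph{Step 2 (eigenspace decomposition).} Since $T^p-T=\prod_{a\in\mathbb{F}_p}(T-a)$ has distinct roots, $\delta$ acts semisimply with eigenvalues in $\mathbb{F}_p$ on each finite-dimensional $k$-space $R/\fm^n$; these are $\delta$-stable because $\delta(\fm)\subseteq\fm$. The projectors $e_a=\prod_{b\ne a}(\delta-b)/(a-b)$ are compatible with the inverse system. Passing to the limit gives $R=\widehat{\bigoplus}_{a}R_a$ with $R_a=\ker(\delta-a)$, and $R_aR_b\subseteq R_{a+b}$ by the Leibniz rule. In particular $\fm/\fm^2$ has an eigenbasis. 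Applying the projectors $e_a$ to lifts of these basis vectors produces $x\in R_a\cap\fm$ and $y\in R_b\cap\fm$ whose images form a basis of $\fm/\fm^2$. Hence $x,y$ is a regular system of parameters, and $\delta=ax\,\partial_x+by\,\partial_y$ in these coordinates.

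\emph{Step 3 (ruling out zero eigenvalues).} If $a=0$, then $\delta=by\,\partial_y$ is divisible by $y$, contradicting saturatedness (isolated zero). If $b=0$ we get the same contradiction with $x$. So $a,b\in\mathbb{F}_p^\times$. Replacing $\delta$ by $a^{-1}\delta$ preserves $\delta^p=\delta$ and gives $\delta=x\partial_x+\lambda y\partial_y$ with $\lambda=b/a\in\{1,\dots,p-1\}$.

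\emph{Step 4 (invariant ring).} Each monomial $x^iy^j$ lies in $R_{i+\lambda j}$. Thus $\delta$ acts on a power series coefficientwise, by $\sum c_{ij}x^iy^j\mapsto\sum (i+\lambda j)c_{ij}x^iy^j$. Hence $R^\delta$ is exactly the closure of the span of the monomials with $i+\lambda j\equiv0 \pmod p$. This is the ring $k[[x^iy^j\mid i+\lambda j\equiv 0]]$, the completed toric singularity $\frac1p(1,\lambda)$. By Lemma \ref{quotient prop} and flatness of completion, $\widehat{\cO}_{V,\tilde q}\cong R^\delta$; this needs a short check that invariants commute with completion, which I expect to hold because $\delta$ is $\fm$-adically continuous.

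I expect the main obstacle to be Step 2: making the eigenspace decomposition and the lifting of eigenvectors rigorous in the complete ring. The point to verify there is that the projectors converge $\fm$-adically and that the lifted $x,y$ really are parameters.
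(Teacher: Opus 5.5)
The paper gives no proof of this statement; it is quoted from Hirokado. Your proposal is therefore not competing with an internal argument, and it stands as a correct, self-contained proof.

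It follows the standard route. After Step 1, the identity $\delta^p=\delta$ is the same as a $\mathbb{Z}/p$-grading of $R$ (a $\mu_p$-action). Steps 2--3 then linearize this action by taking homogeneous lifts of a homogeneous basis of $\fm/\fm^2$.

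\emph{Step 2.} The obstacle you anticipate is not there. Since $\delta^p=\delta$ holds on $R$ itself, the projectors $e_a=\prod_{b\neq a}(\delta-b)/(a-b)=1-(\delta-a)^{p-1}$ are polynomials in $\delta$ acting directly on $R$. So $R=\bigoplus_{a\in\mathbb{F}_p}R_a$ is an honest finite direct sum, and no limit is needed. The hypothesis $\delta(\fm)\subseteq\fm$ is used only to see that the $e_a$ preserve $\fm$ and $\fm^2$. That is exactly what gives $e_a(x_0)\equiv x_0 \pmod{\fm^2}$ for your lifts.

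\emph{Step 4.} The check you flagged goes through. Put $A=\cO_{S,q}^{\cF}$ and $\fm_A=\fm\cap A$.
\begin{itemize}
\item The map $\delta\colon\cO_{S,q}\to\cO_{S,q}$ is $A$-linear.
\item $\cO_{S,q}$ is a finite $A$-module, being finite over $\cO_{S,q}^p\subseteq A$.
\item $\fm_A\cO_{S,q}$ is $\fm$-primary, since it contains the $p$-th powers of a regular system of parameters.
\end{itemize}
Exactness of $\fm_A$-adic completion on finite $A$-modules then gives $\widehat{A}=\ker(\delta\colon\widehat{\cO}_{S,q}\to\widehat{\cO}_{S,q})$. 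This is the same identification the paper uses tacitly in the proof of Theorem \ref{p=2,3}.

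\emph{The reduction to $\delta(\fm)\subseteq\fm$.} This is necessary, not just convenient: the statement as printed implicitly assumes $q$ is a singular point of $\cF$. For instance, $(1+x)\partial_x=z\partial_z$ with $z=1+x$ is multiplicative, but its quotient $k[[x^p,y]]$ is regular. So the conclusion (a singularity of type $\frac{1}{p}(1,\lambda)$) fails there.
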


By \cite{LRQ}, it is known that F-regularity is equivalent to being klt if $p \geq 7$ in dimension $2$. Thus, using Theorem \ref{Posva} and Theorem \ref{multi}, we complete the classification of klt quotients of regular surfaces in the case $p \geq 7$. So we consider the cases $p=2,3$ and $5$. 

The following proposition plays an important role in the classification.

\begin{prop} \label{m2}
Let $\cF$ be a 1-foliation on $S$ generated by $\delta \in \Der{\cO_{S,q}}$ at $q \in S$. 
If $\delta \in \fm^2\Der{\cO_{S,q}}$, then $\cF$ is not $\frac{2}{3}$-klt.
\end{prop}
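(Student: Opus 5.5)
We test the single divisor $E$ obtained by blowing up $q$: let $\pi:\tilde S\to S$ be the blow-up at $q$ and $E$ its exceptional curve. Since $S$ is regular, $a(E;S)=1$. By Lemma \ref{blow-up} with $d=2$, $a(E;\cF)\le -1-\epE$. For $t=\frac23$ the adjoint discrepancy is then
$$a(E;S,\cF,\tfrac23)=\tfrac23 a(E;\cF)+\tfrac13 a(E;S)\le \tfrac23(-1-\epE)+\tfrac13=-\tfrac13-\tfrac23\epE .$$
The $\frac23$-klt condition for $E$ asks for $a(E;S,\cF,\frac23)>-\frac23\epE-\frac13$. Our bound gives exactly $\le -\frac23\epE-\frac13$, so the strict inequality fails and $\cF$ is not $\frac23$-klt.

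The steps are as follows. First, note that $\cF$ has rank one and is generated by $\delta$ at $q$, hence is invertible near $q$, so Lemma \ref{blow-up} applies after localizing or completing at $q$. Second, recall the standard fact $K_{\tilde S}=\pi^*K_S+E$ for the blow-up of a smooth point of a surface. Third, combine the two bounds as above, separately in the cases $\epE=0$ and $\epE=1$. In either case we get $a\le -1/3$ or $a\le -1$ respectively, which equals the threshold, so the required strict inequality cannot hold.

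The argument is short, and the only point needing care is the hypothesis of Lemma \ref{blow-up}. It requires $\delta\in\fm^d\Der{\cO_{X,q}}$ with $\delta$ generating $\cF$, which is exactly our assumption with $d=2$. One also has to check that the inequality $a(E;\cF)\le -d+1-\epE$ uses the same $\epE$ that appears in the definition of $t$-klt. It does, since both refer to invariance of $E$ under $\pi^*\cF$.

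The result is sharp at $t=\frac23$ only in the sense of non-klt. By Lemma \ref{t'-klt} it then also fails for every larger $t$, provided $S$ is klt, which holds since $S$ is regular.
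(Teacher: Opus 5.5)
Your proposal is correct and is essentially the paper's proof. You blow up $q$, use $a(E;S)=1$ and Lemma \ref{blow-up} with $d=2$ to get $a(E;\cF)\le -1-\epE$, and observe that the adjoint discrepancy at $t=\frac23$ is then at most exactly the klt threshold $-\frac23\epE-\frac13$, so the strict inequality fails.
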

\begin{proof}
Let $\pi : \tilde{S} \to S$ be the blow-up at $q$ and $E$ the unique exceptional divisor. Then we have $a(E;S)=1$.
By Lemma \ref{blow-up}, the assumption of $\delta$ implies $a(E;\cF) \leq -1-\epE$. Hence we obtain
$$\frac{2}{3}a(E;\cF)+\frac{1}{3}a(E;S) \leq \frac{2}{3}(-1-\epE)+\frac{1}{3}=-\frac{2}{3}\epE-\frac{1}{3}.$$
Thus $\cF$ is not $\frac{2}{3}$-klt.
\end{proof}

\subsection{Case $p=2,3$}
Let $\delta$ be a $p$-closed derivation of $k[[x,y]]$. Then $\delta$ induces the invariant ring
$$ A: = k[[x,y]]^{\delta}=\{a \in k [[x,y]] \mid \delta (a) = 0\}.$$
This ring is a complete normal $k$-subalgebra of $k[[x,y]]$ containing $k[[x^p,y^p]]$ and the quotient field $K=Q(A)$ is a purely inseparable extension of $k((x^p,y^p))$ of degree $p$.

\begin{prop} \textup{\cite{Miyanishi}} \label{MR}
In the above setting, let $\fm$ be the maximal ideal of $k[[x,y]]$. 
\begin{enumerate}
\item If $p=2$, there exists an element $\psi\in \fm$ such that $A=k[[x^2,y^2,\psi]]$.
\item If $p=3$, there exist elements $\phi,\psi$ of $k[[x,y]]$ such that $A=k[[x^3,y^3,\phi^2\psi,\phi\psi^2]]$. Furthermore, in this situation, the following hold:
\begin{enumerate}
    \item at least one of $\phi$ and $\psi$ belongs to $\fm$; and
    \item $1,\phi^2\psi,\phi\psi^2$ form a free $k[[x^3,y^3]]$-basis of $A$.
\end{enumerate} 
\end{enumerate}
\end{prop}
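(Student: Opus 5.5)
This result is cited from \cite{Miyanishi}, but a direct argument goes as follows. Put $R=k[[x,y]]$ and $R_0=k[[x^p,y^p]]$. We have $R_0\subseteq A\subseteq R$, and $R$ is free of rank $p^2$ over $R_0$. The field $K=Q(A)$ has degree $p$ over $Q(R_0)=k((x^p,y^p))$: indeed $[Q(R):Q(R_0)]=p^2$, and by Proposition \ref{Jacobson} together with Lemma \ref{quotient prop}(1), the extension $Q(R)/K$ has degree $p$ because the foliation generated by $\delta$ has rank one. The strategy is first to show that $A$ is a free $R_0$-module of rank $p$, and then to pick a well-adapted basis.

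Freeness comes from depth. Since $A$ is normal of dimension $2$, it is Cohen--Macaulay. It is also module-finite over the regular ring $R_0$, so the Auslander--Buchsbaum formula makes $A$ free over $R_0$, necessarily of rank $p$.

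For $p=2$ we have $A=R_0\cdot 1\oplus R_0\cdot\psi$. Because $A/\fm_{R_0}A$ is local, we may subtract a constant from $\psi$ and assume $\psi\in\fm$. Then $A=R_0[\psi]=k[[x^2,y^2,\psi]]$.

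For $p=3$ the degree-$3$ purely inseparable extension $K/Q(R_0)$ is generated by a single element $f$ with $f^3\in Q(R_0)$. The goal is to choose $\phi,\psi\in R$ so that $u=\phi^2\psi$ and $v=\phi\psi^2$ satisfy $uv=(\phi\psi)^3\in R_0$, $u^2=\phi^3\cdot u\psi\cdot\phi^{-1}$, and so on. Note that $u^2=\phi^3\,v\,\psi^{-3}\cdot\psi^{3}$, which is really $u^2=\phi^4\psi^2=(\phi^3/\psi^3)\cdot\psi^{3}\cdot\ldots$; this has to be checked carefully. I would construct them as follows. Take the reflexive rank-one $A$-modules appearing in the decomposition of $R$ into $\delta$-eigen or graded pieces, or more concretely use the conductor: write $K=Q(R_0)(f)$ and $A\cap(R_0+R_0f+R_0f^2)$. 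Clearing denominators, let $\phi$ and $\psi$ be chosen so that $f=\psi/\phi$ up to a cube. Then $A\cdot 1\oplus A$-components in degree $1$ and degree $2$ in $f$ are the rank-one reflexive $R_0$-modules $R_0 f\cap A$ and $R_0f^2\cap A$. Since $R_0$ is regular, hence a UFD, these modules are principal, generated by $\phi^2\psi$ and $\phi\psi^2$ respectively, after choosing $\phi,\psi$ with no common cube factors. The resulting basis $1,\phi^2\psi,\phi\psi^2$ is (b), and it generates $A$ as a ring, which is the stated presentation. Finally, (a) holds because if $\phi$ and $\psi$ were both units, $f$ would be a unit cube root in $R$, making $A$ contain a unit outside $R_0+\fm A$ and contradicting the choice of basis. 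The main obstacle is showing that $A$ actually splits along the powers of $f$, that is, $A=\bigoplus_i (A\cap R_0f^i)$. This requires a normalization of $f$, for instance choosing $f$ so that the $f$-grading is compatible with integral closure, and I expect to need the normality of $A$ essentially at this point.
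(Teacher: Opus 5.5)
\paragraph{Verdict.} Your freeness argument and your treatment of $p=2$ are fine and agree with the paper. For $p=3$, however, you leave unproved exactly the step that carries the content of (2), and your argument for (a) does not work.

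\paragraph{The missing step.} What you need is an $R_0$-basis $1,\tau_1,\tau_2$ of $A$ with a ``graded'' multiplication table $\tau_1^2=a_2\tau_2$, $\tau_2^2=b_1\tau_1$, $\tau_1\tau_2\in R_0$. Equivalently, you need the splitting $A=\bigoplus_i\bigl(A\cap Q(R_0)f^i\bigr)$ for a suitable generator $f$. You name this as ``the main obstacle'' but do not prove it.

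\paragraph{The splitting depends on $f$.} Take $\delta=x\partial_x+y\partial_y$, so that $A=R_0\oplus R_0x^2y\oplus R_0xy^2$. The element $f=y/x+y^2/x^2$ is a generator of $K$ over $Q(R_0)$, but one computes $A\cap Q(R_0)f=R_0(x^2y+xy^2)$ and $A\cap Q(R_0)f^2=R_0(x^4y^2+2x^3y^3+x^2y^4)$. Together with $R_0$, these pieces have image of dimension $2$ in the $3$-dimensional space $A/(x^3,y^3)A$, so they cannot sum to $A$. So ``choosing $f$ compatibly with integral closure'' is the whole problem, not a formality.

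Even granting the splitting, you would still have to show that the two generators have the form $\phi^2\psi,\phi\psi^2$ for the same $\phi,\psi$. This follows from $\tau_1^2\in A\cap Q(R_0)f^2=R_0\tau_2$ and $\tau_2^2\in A\cap Q(R_0)f=R_0\tau_1$, followed by extracting cube roots in $R_0=R^3$. The condition ``no common cube factors'' plays no role.

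\paragraph{The missing idea.} It is elementary and needs nothing beyond freeness; normality is not used at this point. Take any basis $1,\tau_1,\tau_2$ and write $\tau_1^2=a_0+a_1\tau_1+a_2\tau_2$, $\tau_2^2=b_0+b_1\tau_1+b_2\tau_2$, $\tau_1\tau_2=c_0+c_1\tau_1+c_2\tau_2$ with coefficients in $R_0$.
\begin{enumerate}
\item Complete the squares via $\tau_1\mapsto\tau_1+a_1$ and $\tau_2\mapsto\tau_2+b_2$. In characteristic $3$ this kills $a_1$ and $b_2$.
\item Impose $\tau_1^3,\tau_2^3\in R_0$, which holds because $A^3\subseteq R^3=R_0$. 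This forces $a_2c_2=b_1c_1=0$ and $a_0+a_2c_1=b_0+b_1c_2=0$.
\item Since $a_2,b_1\neq 0$ (otherwise some $\tau_i\in Q(R_0)$), you get $c_1=c_2=a_0=b_0=0$ and $c_0=a_2b_1$.
\item Writing $a_2=\phi^3$ and $b_1=\psi^3$ gives $\tau_1=\phi^2\psi$ and $\tau_2=\phi\psi^2$.
\end{enumerate}
This is the paper's argument.

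\paragraph{Part (a).} Nothing in your construction keeps the generators in $\fm$. For $\delta=\partial_y$ and $f=1+x$, the $f$-grading does split $A=k[[x,y^3]]$, with generators $1+x$ and $(1+x)^2$. This gives $\phi=1$ and $\psi=1+x$, both units, and there is no choice of basis for this to contradict.

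The paper avoids this by choosing $\tau_1,\tau_2$ of positive order, with the leading-form conditions (iii) and (iv). These force $a_1,b_2\in\fm$, so the translations keep $\tau_i\in\fm$.

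Alternatively, you can fix (a) afterwards. If $\phi$ and $\psi$ are both units, set $t=\psi/\phi$. Then $A=R_0\oplus R_0t\oplus R_0t^2=R_0\oplus R_0t'\oplus R_0t'^2$ with $t'=t-t(0)\in\fm$, so you may take $\phi'=1$ and $\psi'=t'$.
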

\begin{proof}
Let $B=k[[x^p,y^p]]$. By \cite[Proposition 2.2.11]{CM}, we see that $A$ is a free $B$-module of rank $p$. We can choose a $B$-basis of $A$ containing $1$ as a member.
If $p=2$, then there exists $\psi \in \fm$ such that $1,\psi$ form a free $B$-basis of $A$. Since $B=A^2$, we have $\psi^2 \in B$, showing that $k[[x^2,y^2]][\psi] \subseteq A$. Since $A$ is complete, we obtain $k[[x^2,y^2,\psi]] \subseteq A \subseteq k[[x^2,y^2,\psi]]$, as required.

Suppose $p=3$. We write $f=\sum_{\nu=0}^{\infty} f_{\nu}$, where $f_{\nu}$ denotes the degree $\nu$ part of $f$. For convenience, we set $f_{\nu}=0$ for $\nu < 0$. Moreover, note that $f_0 \in k$.
We can choose $\tau_1,\tau_2\in A$ satisfying the following conditions, where we set $\nu_1 \coloneq \mathrm{ord}(\tau_1)$ and $\nu_2 \coloneq \mathrm{ord}(\tau_2)$:
\begin{enumerate}[label=(\roman*)]
    \item $0<\nu_1 \leq \nu_2$;
    \item $1,\tau_1,\tau_2$ form a $B$-basis of $A$;
    \item $\tau_{1,\nu_1} \notin B$;
    \item $\tau_{2,\nu_2} \notin B+B\tau_{1,\nu_1}$.
\end{enumerate}
Write
\begin{align}
    \tau_1^2&=a_0+a_1\tau_1+a_2\tau_2, \label{tau_1^2}\\
    \tau_2^2&=b_0+b_1\tau_1+b_2\tau_2, \label{tau_2^2}\\
    \tau_1\tau_2&=c_0+c_1\tau_1+c_2\tau_2, \label{tau_1tau_2}
\end{align}
with $a_i,b_i,c_i \in B$. 

Now we will show that $a_1,b_2 \in \fm$.
First, comparing the terms of order $\nu_1$ on both sides of \eqref{tau_1^2}, we have
\begin{equation} \label{a}
    a_{0,\nu_1}+a_{1,0}\tau_{1,\nu_1}+a_{2,\nu_1-\nu_2}\tau_{2,\nu_2}=0.
\end{equation}
If $a_{2,0} \neq 0$ in the case $\nu_1=\nu_2$, then we have $\tau_{2,\nu_2}=-a_{2,0}^{-1}(a_{0,\nu_1}+a_{1,0}\tau_{1,\nu_1})$, which contradicts condition $(\mathrm{iv})$. Thus it follows that $a_{2,0}=0$. Hence equation \eqref{a} becomes
$$a_{0,\nu_1}+a_{1,0}\tau_{1,\nu_1}=0.$$
If $a_{1,0} \neq 0$, then we have $\tau_{1,\nu_1}=-a_{1,0}^{-1}a_{0,\nu_1}$, which contradicts condition $(\mathrm{iii})$. Thus it follows that $a_{1,0}=0$. Therefore we obtain $a_1 \in \fm$.

Next, we proceed by induction to show that $b_{1,\nu}=0$ for $0 \leq \nu < \nu_2-\nu_1$.
Assume that the assertion holds for $0 \leq l < \nu$. Comparing the terms of order $\nu_1 + \nu$ on both sides of \eqref{tau_2^2}, we have
\begin{equation} \label{b}
    b_{0,\nu_1+\nu}=-b_{1,\nu}\tau_{1,\nu_1}.
\end{equation}
Suppose that $b_{1,\nu} \neq 0$. We compare the exponent of each prime factor on both sides of \eqref{b}. Since $b_{0,\nu_1+\nu},b_{1,\nu} \in B$, the exponent of each prime factor of $\tau_{1,\nu_1}$ must be a multiple of $3$. Hence we have $\tau_{1,\nu_1} \in B$, which contradicts condition $(\mathrm{iii})$. Thus it follows that $b_{1,\nu}=0$. Finally, comparing the terms of order $\nu_2$ on both sides of \eqref{tau_2^2}, we have
$$b_{0,\nu_2}+b_{1,\nu_2-\nu_1}\tau_{1,\nu_1}+b_{2,0}\tau_{2,\nu_2}=0.$$
If $b_{2,0} \neq 0$, then we have $\tau_{2,\nu_2}=-b_{2,0}^{-1}(b_{0,\nu_2}+b_{1,\nu_2-\nu_1}\tau_{1,\nu_1})$, which contradicts condition $(\mathrm{iv})$. Therefore we obtain $b_2 \in \fm$. By replacing $\tau_1$ with $\tau_1+a_1$ and $\tau_2$ with $\tau_2+b_2$, we may assume that $a_1=b_2=0$ while maintaining the condition that $\tau_1,\tau_2 \in \fm$.

By \eqref{tau_1^2}-\eqref{tau_1tau_2}, $\tau_1^3$ and $\tau_2^3$ are written as
\begin{align*}
\tau_1^3=\tau_1(a_0+a_2\tau_2)&=a_2c_0+(a_0+a_2c_1)\tau_1+a_2c_2\tau_2,\\
\tau_2^3=\tau_2(b_0+b_1\tau_1)&=b_1c_0+b_1c_1\tau_1+(b_0+b_1c_2)\tau_2.
\end{align*}
Since $B=A^3$, it follows that $\tau_1^3,\tau_2^3\in B$. Thus we have $a_2c_2=0$ and $b_1c_1=0$. Since $\tau_1^2,\tau_2^2 \notin B$, we see that $a_2 \neq 0$ and $b_1 \neq 0$. Hence we have $c_1=c_2=0$. 
Again, since $\tau_1^3 \in B$, the coefficient $a_0+a_2c_1=a_0$ of $\tau_1$ in $\tau_1^3$ must be $0$. Similarly, $b_0=0$. 
Therefore we obtain
\begin{equation} \label{tau}
    \tau_1^2=a_2\tau_2, \;\;\tau_2^2=b_1\tau_1,\;\;\text{and}\;\;\tau_1\tau_2=c_0, 
\end{equation}
where $c_0=a_2b_1$. Write $a_2=\phi^3$ and $b_1=\psi^3$, where $\phi,\psi \in k[[x,y]]$. By the expressions \eqref{tau}, we have $$\left(\frac{\tau_1}{\tau_2}\right)^3=\frac{a_2\tau_2\cdot\tau_1}{b_1\tau_1\cdot\tau_2}=\left(\frac{\phi}{\psi}\right)^3,$$ and hence $\tau_1/\tau_2=\phi/\psi$. Using \eqref{tau} again, we obtain
$$\tau_1=a_2\cdot\frac{\tau_2}{\tau_1}=\phi^3\cdot\frac{\psi}{\phi}=\phi^2\psi\;\;\text{and}\;\;\tau_2=b_1\cdot\frac{\tau_1}{\tau_2}=\psi^3\cdot\frac{\phi}{\psi}=\phi\psi^2.$$
Moreover, since $\tau_1,\tau_2 \in \fm$, at least one of $\phi$ and $\psi$ belongs to $\fm$. Therefore we get the desired description of $A$.
\end{proof}

To prove the following lemma, we identify the ring $k[[u,v]]$ with the base change of $k[[x,y]]$ by the Frobenius map of $k$ via the isomorphism
$$k[[x,y]] \otimes_k k \to k[[u,v]];\;\;  x \otimes 1 \mapsto u,\;y \otimes 1 \mapsto v$$
and define the Frobenius twist
$F:k[[x,y]] \to k[[u,v]]$ as the composition of the canonical homomorphism
$$k[[x,y]] \to k[[x,y]] \otimes_{k} k;\;\; f(x,y) \mapsto f(x,y) \otimes 1$$
and the above identification. From now on, we denote $F(f(x,y))$ by $f^{(p)}(u,v)$. In concrete terms, if $f(x,y)$ is written as
$$f(x,y)=\sum_{i,j}a_{ij}x^iy^j,$$
then
$$f^{(p)}(u,v)=\sum_{i,j}a_{ij}^pu^iv^j.$$
In addition, substituting $x^p$ and $y^p$ for $u$ and $v$, respectively, we have
\begin{equation} \label{(p)}
    f^{(p)}(x^p,y^p)=\sum_{i,j} a_{ij}^px^{pi}y^{pj}=f(x,y)^p.
\end{equation}
\begin{lem} \label{coprime}
Suppose $p=3$. For $\phi$ and $\psi$ as in (2) of Proposition \ref{MR}, we have 
$$\mathrm{gcd}(\phi,\psi)=1 \;\; \text{and} \;\; \mathrm{gcd}(\phi_x\psi-\phi\psi_x,\phi_y\psi-\phi\psi_y)=1.$$
\end{lem}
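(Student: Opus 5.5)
The plan is to exploit the explicit shape $A=k[[x^3,y^3,\tau_1,\tau_2]]$ with $\tau_1=\phi^2\psi$, $\tau_2=\phi\psi^2$ from Proposition \ref{MR}, together with the fact that $A$ is exactly an invariant ring. Since $A=k[[x,y]]^{\delta}=k[[x,y]]^{\delta/g}$ for any common factor $g$ of the coefficients of $\delta$, I first replace $\delta$ by a generator $\theta$ of the saturation, so that the coefficients of $\theta$ are coprime. Throughout, $B=k[[x^3,y^3]]$, and every cube $h^3=h^{(3)}(x^3,y^3)$ lies in $B$ by \eqref{(p)}.

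For $\gcd(\phi,\psi)=1$, suppose an irreducible $h\in\fm$ divides both $\phi$ and $\psi$. Then $h^3$ divides both $\tau_1$ and $\tau_2$, and $h^3\in B\subseteq A$. Hence $\tau_1/h^3\in Q(A)\cap k[[x,y]]=A$ by (2) of Lemma \ref{quotient prop}. Using the free basis (b) of Proposition \ref{MR}, write $\tau_1/h^3=\alpha+\beta\tau_1+\gamma\tau_2$ with $\alpha,\beta,\gamma\in B$. Multiplying by $h^3\in B$ and comparing coefficients in the basis $1,\tau_1,\tau_2$ gives $1=h^3\beta\in\fm$, which is a contradiction.

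For the second assertion, set $\omega=\psi\,d\phi-\phi\,d\psi$, whose coefficients are $\phi_x\psi-\phi\psi_x$ and $\phi_y\psi-\phi\psi_y$. A direct computation in characteristic $3$ gives
\begin{equation*}
d\tau_1=2\phi\psi\,d\phi+\phi^2d\psi=-\phi\,\omega,\qquad d\tau_2=\psi^2d\phi+2\phi\psi\,d\psi=\psi\,\omega .
\end{equation*}
Every $a\in A$ is a $B$-combination of $1,\tau_1,\tau_2$, and $d$ kills $B$. So the $k[[x,y]]$-submodule of $\Omega_{k[[x,y]]}$ generated by $dA$ is $(\phi,\psi)\,\omega$. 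Suppose now that an irreducible $h$ divides both coefficients of $\omega$, so $\omega=h\omega'$. By the first part, $\phi$ or $\psi$ is a unit at the height-one prime $(h)$. Hence, after localizing at $(h)$, the module generated by $dA$ is $\omega\, k[[x,y]]_{(h)}=h\,\omega'\,k[[x,y]]_{(h)}$, and it is therefore contained in $h\,\Omega_{k[[x,y]]_{(h)}}$.

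The main obstacle is to derive a contradiction from this at the codimension-one point, where I expect to work as follows. Put $R=A_{(h)\cap A}$ and $R'=k[[x,y]]_{(h)}$. Both are DVRs, because they are normal of dimension one, and $R'/R$ is purely inseparable of degree $3$ with $R'=R^{\theta}$ locally. Since $R'^3\subseteq R$, we have $R=R'^3[s]$ for a suitable $s$, and then $d s$ generates the image of $R'\otimes\Omega_R\to\Omega_{R'}$, which is the annihilator of $\theta$. Because $\theta$ is saturated, this annihilator is a saturated (direct-summand) rank-one submodule of $\Omega_{R'}$, and in particular $ds\notin h\,\Omega_{R'}$. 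This can be checked by writing $s$ in terms of a uniformizer and a $p$-basis, or, alternatively, deduced from the equality $q^*K_{Y}=K_X+(p-1)K_{\cF}$ of Lemma \ref{adjunction} at the generic point of $\{h=0\}$. On the other hand $s=a/b$ with $a,b\in A$ and $b\notin(h)$, so $ds=(b\,da-a\,db)/b^2$ lies in the module generated by $dA$, which is contained in $h\,\Omega_{R'}$. This contradiction shows that no such $h$ exists, which gives $\gcd(\phi_x\psi-\phi\psi_x,\phi_y\psi-\phi\psi_y)=1$.

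If the $p$-basis argument in the last step turns out to be awkward, I would fall back on the discrepancy-free local form of Lemma \ref{adjunction} to compare the vanishing order of $d\tau_i$ along $h$ with that of $K_{\cF}$.
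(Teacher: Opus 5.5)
Your proof of $\gcd(\phi,\psi)=1$ is correct. It is in fact more solid than the paper's step (i), where the passage from $X,Y\in P$ to $x,y\in\fp$ is not justified: under $\bar\theta$ the variables $X,Y$ go to $\tau_1,\tau_2$, not to $x,y$. Your argument via $\tau_1/h^3\in Q(A)\cap k[[x,y]]=A$ and the freeness of $1,\tau_1,\tau_2$ over $B$ gives a genuine contradiction. For the second gcd you take a different route from the paper. The paper writes $A\cong k[[u,v,X,Y]]/I$, inverts $\phi^{(3)}$ to get $Z^3=\psi^{(3)}/\phi^{(3)}$, and applies the Jacobian criterion at the height-one prime to contradict $(R_1)$. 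Your reduction is sound: $d\tau_1=-\phi\omega$ and $d\tau_2=\psi\omega$ give $dA\subseteq(\phi,\psi)\omega$. Hence if $h$ divides both coefficients of $\omega$, the quotient rule gives $dR\subseteq h\,\Omega_{R'}$ for $R=A_{(h)\cap A}$. This step needs neither the first part nor $\theta$. Also, you mean $R=R'^{\theta}$, not $R'=R^{\theta}$.

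The gap is the decisive claim $ds\notin h\,\Omega_{R'}$. What you actually know is only $R'\,ds\subseteq\mathrm{Ann}(\theta)$. That $\mathrm{Ann}(\theta)$ is a saturated rank-one submodule is automatic, and it yields $ds\notin h\,\Omega_{R'}$ only if you already know $R'\,ds=\mathrm{Ann}(\theta)$. That equality is equivalent to the very statement you need; it is the codimension-one case of the Jacobson--Ekedahl correspondence, so you would have to cite or prove it. Likewise, ``deduce it from Lemma \ref{adjunction}'' is not an argument as it stands.

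The direct verification you allude to does work, and should be written out:
\begin{itemize}
\item Let $\kappa'$ be the residue field of $R'$. Since $R'$ is free of rank $9$ over $R'^3$ (basis $x^iy^j$, $0\le i,j\le 2$) and $e(R'/R'^3)=3$, we get $[\kappa':\kappa'^3]=3$. So $\Omega_{\kappa'/k}$ is one-dimensional.
\item The conormal sequence $(h)/(h^2)\to\Omega_{R'}\otimes\kappa'\to\Omega_{\kappa'/k}\to 0$, with dimensions $2$ and $1$, then forces $dh\notin h\,\Omega_{R'}$.
\item $R'$ is finite over the DVR $R$ with $[Q(R'):Q(R)]=3=ef$, so $e=1$ or $f=1$.
\item If $e=1$, a uniformizer $s$ of $R$ is $hu$ with $u\in R'^{\times}$, so $ds\equiv u\,dh\not\equiv 0\pmod{h}$.
\item If $f=1$, then $\kappa(R)=\kappa'$, so some unit $s\in R$ has $\bar s\notin\kappa'^3$. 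Then $ds$ maps to $d\bar s\neq 0$ in $\Omega_{\kappa'/k}$.
\end{itemize}
Either way some $s\in R$ has $ds\notin h\,\Omega_{R'}$, contradicting $dR\subseteq h\,\Omega_{R'}$. You do not even need $R=R'^3[s]$.
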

\begin{proof}
Let $\tau_1=\phi^2\psi$ and $\tau_2=\phi\psi^2$. We consider the surjective homomorphism
$$\theta:k[[u,v,X,Y]] \to A; \;\;(u,v,X,Y) \mapsto (x^3,y^3,\tau_1,\tau_2).$$
Using equation \eqref{(p)}, we see that
\begin{align*}
    \theta(X^2-\Fh Y)&=(\phi^2\psi)^2-\phi^3\cdot\phi\psi^2=0, \\
    \theta(Y^2-\Fs X)&=(\phi\psi^2)^2-\psi^3\cdot\phi^2\psi=0, \\
    \theta(XY-\Fh\Fs)&=\phi^2\psi\cdot\phi\psi^2-\phi^3\psi^3=0.
\end{align*}
Thus $\theta$ induces the homomorphism 
$$\bar{\theta}:k[[u,v,X,Y]]/I \to A,\;\; \text{where}\; I=(X^2-\Fh Y,Y^2-\Fs X, XY-\Fh\Fs).$$ Let $\bar{f} \in \text{Ker}\;\bar{\theta}$. Using the congruences $X^2 \equiv \Fh Y, Y^2 \equiv \Fs X$, and $XY \equiv \Fh\Fs \pmod{I}$, we may assume that $f=f_0(u,v)+f_1(u,v)X+f_2(u,v)Y$ with $f_0(u,v),f_1(u,v),f_2(u,v) \in k[[u,v]]$. Then the assumption implies $$\theta(f)=f_0(x^3,y^3)+f_1(x^3,y^3)\tau_1+f_2(x^3,y^3)\tau_2=0.$$
Since $1,\tau_1,\tau_2$ form a free $k[[x^3,y^3]]$-basis of $A$ by Proposition \ref{MR}, we have $f_0=f_1=f_2=0$. Thus $\bar{\theta}$ is an isomorphism.

We prove by contradiction that $\mathrm{gcd}(\phi,\psi)=1$ and $\mathrm{gcd}(\phi_x\psi-\phi\psi_x,\phi_y\psi-\phi\psi_y)=1$. Let $\fp=(\tau)$ be a prime ideal of height $1$ of $k[[x,y]]$, and let $P$ be the prime ideal of $k[[u,v,X,Y]]/I$ induced by $\fp$. By Lemma \ref{homeo}, we see that $\text{ht}(P)=1$.

($\mathrm{i}$) Suppose that $\tau$ is an irreducible factor of both $\phi$ and $\psi$. Then it follows that $\Fh,\Fs \in P$. Since $X^2-\Fh Y,Y^2-\Fs X \in P$, we have $X^2,Y^2 \in P$. As $P$ is a prime ideal of $k[[u,v,X,Y]]/I$, we have $X,Y \in P$. 
Thus we obtain $x,y \in \fp$, which contradicts the assumption that $\text{ht}(\fp)=1$.

($\mathrm{ii}$) Suppose that $\tau$ is an irreducible factor of both $\phi_x\psi-\phi\psi_x$ and $\phi_y\psi-\phi\psi_y$. Since the Frobenius twist and the derivations commute, we have
$$(\phi_x\psi-\phi\psi_x)^{(3)}=\Fh_u\Fs-\Fh\Fs_u\;\; \text{and} \;\; (\phi_y\psi-\phi\psi_y)^{(3)}=\Fh_v\Fs-\Fh\Fs_v.$$
Thus the assumption implies $\Fh_u\Fs-\Fh\Fs_u,\Fh_v\Fs-\Fh\Fs_v \in P$. From ($\mathrm{i}$), we may assume that $\Fh \notin P$. Then the ideal $P$ induces a prime ideal $Q$ of $A' \coloneq (k[[u,v,X,Y]]/I)_{\Fh}$.

On the other hand, we have $A' \cong k[[u,v,Z]]_{\Fh}/(Z^3-\Fs/\Fh)$.
Also, the Jacobian matrix $J$ of $A'$ is
$$J=\begin{pmatrix}
    \dfrac{\Fh_u\Fs-\Fh\Fs_u}{(\Fh)^2} & \dfrac{\Fh_v\Fs-\Fh\Fs_v}{(\Fh)^2} & 0
\end{pmatrix}.$$
Since $\Fh_u\Fs-\Fh\Fs_u,\Fh_v\Fs-\Fh\Fs_v \in Q$, it follows that
$$\text{rank}\;J \;(\text{mod} \;Q)  = 0.$$
By Jacobian criterion for regularity, $A'_Q=(k[[u,v,X,Y]]/I)_P$ is not regular. Since $P$ is a prime ideal of height $1$, $k[[u,v,X,Y]]/I$ does not satisfy Serre's condition $(R_1)$. This contradicts the normality of $A$.
\end{proof}

Using the expressions of $k[[x,y]]^{\delta}$ given in Proposition \ref{MR}, we classify the klt quotients of regular surfaces in the case of $p=2,3$.

\begin{thm} \label{p=2,3}
 Suppose that $\cF$ is a $\frac{p-1}{p}$-klt 1-foliation on $S$. 
\begin{enumerate}
\item If $p=2$, then $S/\cF$ has at worst rational double points.
\item If $p=3$, then each singular point of $S/\cF$ is either
\begin{enumerate}
    \item a rational double point, or
    \item a toric singularity of type $\frac{1}{3}(1,1)$.
\end{enumerate}
\end{enumerate}
\end{thm}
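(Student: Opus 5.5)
The approach is local. Fix a closed point $q \in S$ whose image in $S/\cF$ is singular, and let $\delta$ be a local generator of $\cF$ at $q$. We may pass to the completion, so that $\widehat{\cO}_{S,q}=k[[x,y]]$ with $\delta$ a $p$-closed derivation and the singularity given by $A=k[[x,y]]^{\delta}$.

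\textbf{Reduction to order one.} If $\delta \notin \fm\Der{k[[x,y]]}$, then $\delta$ is a unit multiple of a coordinate derivation. The quotient is then regular, so this case does not occur at a singular point. Since $\frac{p-1}{p}\geq \frac{2}{3}$ for $p=3$, Lemma \ref{t'-klt} shows that $\cF$ is $\frac{2}{3}$-klt; here $S$ is regular, hence klt. Proposition \ref{m2} then gives $\delta \notin \fm^2\Der{\cO_{S,q}}$. For $p=2$, $t=\frac12<\frac23$, so this route is unavailable. Instead I would use Theorem \ref{(p-1)/p} directly: $S/\cF$ is klt, hence has rational singularities by Proposition \ref{rational}. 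Either way, $\delta$ has order exactly $1$, or at least the quotient is klt.

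\textbf{Multiplicative case.} If $\delta|_{\fm/\fm^2}$ is not nilpotent, Proposition \ref{nilpotence} shows $\cF$ is lc, so by Theorem \ref{Posva} the quotient is F-regular. I would then show $\delta$ can be normalized to be multiplicative, via the Jordan form of the linear part and $\delta^p=\alpha\delta$ with $\alpha$ a unit. Theorem \ref{multi} then gives type $\frac1p(1,\lambda)$. For $p=2$ this is $\frac12(1,1)=A_1$. For $p=3$, $\lambda=1$ gives $\frac13(1,1)$ and $\lambda=2$ gives $\frac13(1,2)=A_2$, which matches the list.

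\textbf{Nilpotent (additive) case.} This is the main obstacle. The remaining case is that the linear part is nilpotent but nonzero, i.e.\ conjugate to $y\partial_x$.

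For $p=2$, Proposition \ref{MR} gives $A=k[[x^2,y^2,\psi]]$. This is a hypersurface $k[[u,v,w]]/(w^2-\psi^{(2)}(u,v))$, hence Gorenstein. It is klt, hence rational by Proposition \ref{rational}. A Gorenstein rational surface singularity is a rational double point.

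For $p=3$, Proposition \ref{MR} and Lemma \ref{coprime} present $A$ as $k[[u,v,X,Y]]/I$ with $I=(X^2-\Fh Y,Y^2-\Fs X,XY-\Fh\Fs)$. I would split according to whether one of $\phi,\psi$ is a unit.
\begin{enumerate}
\item If, say, $\phi$ is a unit, then $Y=X^2/\Fh$ is eliminated, and $A\cong k[[u,v,X]]/(X^3-\Fh{}^2\Fs)$ is a hypersurface. As in the $p=2$ case, klt forces a rational double point.
\item If both $\phi,\psi\in\fm$, then $\gcd(\phi,\psi)=1$ and the relations exhibit $A$ as non-Gorenstein, the $\frac13(1,1)$-type cone. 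I expect the hard part is showing that klt forces $\phi,\psi$ to have order one and be independent, so that $\phi,\psi$ is a coordinate system. Then $A=k[[x^3,y^3,\phi^2\psi,\phi\psi^2]]$ becomes $k[[x,y]]$ invariants of weight $(1,1)$, i.e.\ $\frac13(1,1)$. To prove the order claim, I would blow up $q$ as in Lemma \ref{blow-up} and compare $a(E;\cF)$ with the order of the linear part, showing that higher orders of $\phi,\psi$ force $a(E;\cF)\le -1-\epE$ along some divisor. Alternatively, I would compute the discrepancy of the quotient directly from the equations of $A$, using that the dualizing module has index $3$ in this case.
\end{enumerate}
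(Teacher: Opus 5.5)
Your $p=2$ argument (Proposition \ref{MR}, hypersurface hence Gorenstein, klt hence rational, hence a rational double point) is the paper's argument. So is the $p=3$ subcase where $\phi$ or $\psi$ is a unit. Your route through Theorem \ref{multi} when the linear part $L$ of $\delta$ is not nilpotent is a legitimate alternative; the unit claim you leave open is immediate, since $\delta^p=\alpha\delta$ induces $L^p=\alpha(0)L$ on $\fm/\fm^2$. One correction: for $p=2$ your case split omits $\delta\in\fm^2\Der{k[[x,y]]}$ (e.g.\ the $D_{2m}^0$ examples). Since the Proposition \ref{MR} argument never uses the linear part, apply it at every point, as the paper does.

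The genuine gap is the $p=3$ case $\phi,\psi\in\fm$. Your tools (Proposition \ref{m2}, Lemma \ref{blow-up}) speak about $\delta$. But $\phi,\psi$ come from an abstract $k[[x^3,y^3]]$-basis of $A$, and your proposal has no link between the two. So ``higher orders of $\phi,\psi$ force $a(E;\cF)\le -1-\epE$'' cannot be carried out. Likewise, the claim that the relations already exhibit the $\frac13(1,1)$ cone is unjustified until you know $\phi,\psi$ are coordinates.

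The missing idea is to write a generator of $\cF$ explicitly in terms of $\phi,\psi$:
\begin{enumerate}
\item Set $\delta'=(\phi_y\psi-\phi\psi_y)\partial_x-(\phi_x\psi-\phi\psi_x)\partial_y$; it kills $\phi/\psi$.
\item Since $Q(A)=k((x^3,y^3))(\phi/\psi)$, and $k((x,y))^{\delta}$, $k((x,y))^{\delta'}$ both have degree $3$ over $k((x^3,y^3))$, the two invariant fields coincide. Proposition \ref{Jacobson} then gives $\delta'=\rho\delta$.
\item The second gcd statement of Lemma \ref{coprime} (you cite that lemma only for the presentation of $A$), together with coprimality of the coefficients of the saturated generator $\delta$, forces $\rho$ to be a unit. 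Hence $\delta'$ generates $\cF$.
\item With $ax+by$, $cx+dy$ the linear parts of $\phi,\psi$, one has $\delta'\equiv -(ad-bc)(x\partial_x+y\partial_y)$ modulo $\fm^2\Der{k[[x,y]]}$. Proposition \ref{m2} then gives $ad-bc\neq 0$.
\item So $\phi,\psi$ are coordinates, and $A=k[[\phi^3,\psi^3,\phi^2\psi,\phi\psi^2]]$ is of type $\frac13(1,1)$.
\end{enumerate}
Incidentally, this shows the subcase $\phi,\psi\in\fm$ never occurs in your nilpotent case, but you need $\delta'$ to see that. Your alternative of computing discrepancies directly from the equations of $A$ is not yet an argument.
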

\begin{proof}
We may assume that $S=\text{Spec}(\cO)$ is the spectrum of a regular local 2-dimensional ring. Let $A=\cO^{\cF}$ be the invariant subring of $\cF$. Then we have $\widehat{A}\cong k[[x,y]]^{\delta}$, where $\delta \in \Der{k[[x,y]]}$ is a generator of $\cF$. Suppose $p=2$. By Proposition \ref{MR}, we have the expression $$\widehat{A}=k[[x^2,y^2,\psi]].$$ 
Since $\psi^2 \in k[[x^2,y^2]]$, $\widehat{A}$ is a hypersurface singularity. In particular, $\widehat{A}$ is Gorenstein. By Theorem \ref{(p-1)/p} and Proposition \ref{rational}, $\widehat{A}$ has at worst rational singularities. Therefore $\widehat{A}$ is a rational double point.

Suppose $p=3$. Let $\widehat{A}=k[[x^3,y^3,\phi^2\psi,\phi\psi^2]]$ be the expression of $\widehat{A}$ given in (2) of Proposition \ref{MR}. We define 
$$\delta'=(\phi_y\psi-\phi\psi_y)\partial_x-(\phi_x\psi-\phi\psi_x)\partial_y.$$ Then we see that
$$\delta'(\frac{\phi}{\psi})=(\phi_y\psi-\phi\psi_y)\cdot\frac{\phi_x\psi-\phi\psi_x}{\psi^2}-(\phi_x\psi-\phi\psi_x)\cdot\frac{\phi_y\psi-\phi\psi_y}{\psi^2}=0,$$
which implies $\phi/\psi \in k((x,y))^{\delta'}$. On the other hand, we have $$k((x,y))^{\delta}=Q(k[[x^3,y^3.\phi^2\psi,\phi\psi^2]])=k((x^3,y^3))(\frac{\phi}{\psi}).$$
Since $k((x,y))^{\delta'} \subseteq k((x,y))^{\delta}$ and both are  field extensions of $k((x^3,y^3))$ of degree $3$, it follows that $k((x,y))^{\delta}=k((x,y))^{\delta'}$.
By Proposition \ref{Jacobson}, there exists $\rho \in k((x,y))$ such that $\delta'= \rho\delta$. 

Write 
\begin{align*}
\delta&=f_1\partial_x+f_2\partial_y,\;\text{where}\;f_1,f_2 \in k[[x,y]],\;\text{gcd}(f_1,f_2)=1,\\
\delta'&=f_1'\partial_x+f_2'\partial_y, \;\text{where}\; f'_1=\phi_y\psi-\phi\psi_y,f'_2=\phi_x\psi-\phi\psi_x,\\
\rho&=\frac{b_1}{b_2}, \;\text{where}\;b_1,b_2 \in k[[x,y]],\;\text{gcd}(b_1,b_2)=1.
\end{align*}
By Lemma \ref{coprime}, we have $\text{gcd}(f'_1,f'_2)=1$. Applying $\delta$ and $\delta'$ to $x$ and $y$, we have
$$b_2f_1'=b_1f_1,\;b_2f_2'=b_1f_2.$$
Hence it follows that $b_1 \divides b_2f_1'$ and $b_1 \divides b_2f_2'$. Since $\text{gcd}(b_1,b_2)=1$, $b_1$ divides both $f_1'$ and $f_2'$. Using $\text{gcd}(f'_1,f'_2)=1$, we have $b_1 \in k[[x,y]]^{\times}$. Similarly, $b_2 \in k[[x,y]]^{\times}$. Thus we obtain $\rho \in k[[x,y]]^{\times}$, which implies that $\delta'$ is a generator of $\cF$.

($\mathrm{i}$)  Suppose that $\phi \in k[[x,y]]^{\times}$ or $\psi \in k[[x,y]]^{\times}$. Without loss of generality, we may assume the former.
Since $\phi^2\psi=\phi^3\cdot\phi^{-1}\psi$ and $\phi\psi^2=\phi^3\cdot(\phi^{-1}\psi)^2$, $\widehat{A}$ is expressed as
$$\widehat{A}=k[[x^3,y^3,t]],$$
where $t=\phi^{-1}\psi$. Thus $\widehat{A}$ has a hypersurface singularity.
By the same argument as in the case of $p=2$, we see that $\widehat{A}$ is a rational double point.

($\mathrm{ii}$) Suppose that $\phi,\psi \in \fm$. Write 
$$\phi=ax+by+P \;\; \mathrm{and} \;\; \psi=cx+dy+Q,$$
with $a,b,c,d \in k$ and $P,Q \in \fm^2$. From these expressions, the coefficients of $\partial_x$ and $\partial_y$ in $\delta'$ satisfy 
\begin{align*}
\phi_y\psi-\phi\psi_y &= (b+P_y)(cx+dy+Q)-(d+Q_y)(ax+by+P) \\
&=-(ad-bc)x+bQ+P_y\psi-dP-Q_y\phi, \\
\phi_x\psi-\phi\psi_x &= (a+P_x)(cx+dy+Q)-(c+Q_x)(ax+by+P) \\
&=(ad-bc)y+aQ+P_x\psi-cP-Q_x\phi.
\end{align*}
Since $P_x,P_y,Q_x,Q_y \in \fm$, we have $\delta' \in \fm^2\Der{k[[x,y]]}$ if and only if $ad-bc=0$. By Proposition \ref{m2}, 
the assumption that $\cF$ is $\frac{2}{3}$-klt implies $ad-bc \neq 0$.
Therefore $\phi,\psi$ form a system of parameters of $k[[x,y]]$. Then there exists an automorphism of $k[[x,y]]$ sending $\phi$ and $\psi$ to $x$ and $y$, respectively. In particular, we have $x^3,y^3 \in k[[\phi^3,\psi^3]]$. Hence $\widehat{A}$ is expressed as
$$\widehat{A}=k[[x^3,y^3,\phi^2\psi,\phi\psi^2]]=k[[\phi^3,\psi^3,\phi^2\psi,\phi\psi^2]].$$
Thus $\widehat{A}$ has a toric singularity of type $\frac{1}{3}(1,1)$.
\end{proof}

\subsection{Case $p=5$}
In the case $p=5$, we determine the klt quotients by 1-foliations that are not lc.

\begin{thm} \label{4/5-klt}
Let $\cF$ be an invertible $\frac{4}{5}$-klt 1-foliation on $S$. If $\cF$ is not lc at $q \in S$, then the singularity at $\tilde{q} \in S/\cF$ is a rational double point of type $E_8^0$. 
\end{thm}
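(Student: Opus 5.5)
Since $S/\cF$ is klt, I would identify the singularity by elimination: show that it is klt but not $F$-regular, and then use the classification of such surface singularities in characteristic $5$.

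First I would pin down $\delta$, a local generator of $\cF$ at $q$. By Proposition \ref{nilpotence}(1), $\cF$ is canonical at any point where $\delta\notin\fm\Der{\cO_{S,q}}$. Since $\cF$ is not lc at $q$, we therefore have $\delta\in\fm\Der{\cO_{S,q}}$. By Proposition \ref{nilpotence}(2), the endomorphism $\delta|_{\fm/\fm^2}$ is then nilpotent. On the other hand, $\cF$ is $\frac{4}{5}$-klt, and $\frac{4}{5}\ge\frac{2}{3}$. So Lemma \ref{t'-klt}, together with the fact that $S$ is regular and hence klt, shows that $\cF$ is $\frac{2}{3}$-klt. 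Proposition \ref{m2} then gives $\delta\notin\fm^2\Der{\cO_{S,q}}$. Hence the linear part of $\delta$ is a nonzero nilpotent $2\times 2$ matrix, and after a linear change of coordinates in $k[[x,y]]$ we can write
$$\delta=(y+g_1)\partial_x+g_2\partial_y,\qquad g_1,g_2\in\fm^2.$$
In particular $\delta$ is not multiplicative, so Theorem \ref{multi} does not apply. This step is only needed for the fallback below.

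Next I would collect the two properties of $S/\cF$ at $\tilde q$. By Theorem \ref{(p-1)/p}, the $\frac{4}{5}$-klt hypothesis means $S/\cF$ is klt at $\tilde q$, and therefore rational by Proposition \ref{rational}. By Theorem \ref{Posva}, the failure of $\cF$ to be lc at $q$ means $S/\cF$ is \emph{not} $F$-regular at $\tilde q$. So $\tilde q$ is a two-dimensional klt singularity in characteristic $5$ that is not $F$-regular. I would then invoke Hara's classification of two-dimensional $F$-regular singularities (the result behind \cite{LRQ}). In characteristic $5$ it says that every klt surface singularity is $F$-regular except the rational double point $E_8^0$. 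This forces $\tilde q$ to be of type $E_8^0$, which is consistent with Matsumoto's list of rational double points arising as $p$-closed derivation quotients for $p=5$ (only $A_4$ and $E_8^0$).

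The main obstacle is the exact form of the characteristic-$5$ exception in Hara's theorem. I have to make sure the only klt, non-$F$-regular surface singularity for $p=5$ is $E_8^0$; in particular, that $E_8^1$ and the non-Gorenstein klt singularities are all $F$-regular. If that citation is not available in a usable form, I would fall back on a direct argument in two steps. First, show $S/\cF$ is Gorenstein at $\tilde q$. By Lemma \ref{adjunction}, $q^*K_{S/\cF}=K_S+4K_\cF$ is principal, so $K_{S/\cF}$ is $5$-torsion in the local class group; I would then use $\delta^5=\alpha\delta$ with $\alpha\in\fm$ (from the nilpotent linear part) to show it is actually trivial. Being rational and Gorenstein, $\tilde q$ is then a rational double point, and Matsumoto's list (\cite{Matsu1}) leaves only $A_4$ and $E_8^0$. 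Second, exclude $A_4$: it is a quotient by a multiplicative derivation, or alternatively it is $F$-regular, which contradicts Theorem \ref{Posva}.
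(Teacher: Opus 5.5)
There is a genuine gap, and both of your routes fail at the same point. You never prove that $\tilde q$ is Gorenstein, and that is exactly what separates $E_8^0$ from the other candidates.

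\textbf{The primary route misstates Hara's classification.} In characteristic $5$ it is not true that every klt surface singularity other than $E_8^0$ is $F$-regular. Hara's theorem for $p=5$ guarantees $F$-regularity only when the dual graph is not of type $(2,3,5)$, apart from the rational double point $E_8^1$. Concretely, take the quasi-homogeneous singularity whose dual graph is a $(-3)$-curve with arms $[2]$, $[2,2]$, $[2,2,2,2]$. It is klt and not Gorenstein. By Watanabe's criterion its $F$-regularity is governed by the same pair $(\mathbb{P}^1,\frac12P_1+\frac23P_2+\frac45P_3)$ as $x^2+y^3+z^5$, and the latter is not even $F$-pure when $p=5$ (Fedder's criterion). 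So ``klt and not $F$-regular'' only tells you that the dual graph is of type $(2,3,5)$; Gorensteinness is still missing.

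\textbf{The fallback does not supply Gorensteinness either.} The claim $5K_{S/\cF}\sim 0$ is correct. However, $\alpha\in\fm$ is just a restatement of $\delta$ being non-multiplicative, and quotients by non-multiplicative derivations need not be Gorenstein. For example, when $p=3$ the derivation $xy\partial_x+(x^2-y^2)\partial_y$ has invariant ring $k[[x^3,y^3,x^2(x^2+y^2),x(x^2+y^2)^2]]$, which is a codimension-two determinantal ring of type $2$. You give no mechanism that turns $\alpha\in\fm$ into $K_{S/\cF}\sim 0$.

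\textbf{How the paper handles it.} The paper avoids this issue by computing directly. It uses the $\frac45$-klt hypothesis a second time, at the blow-up of the point $q'$ on the first exceptional curve, to force the normal form $\delta=y\partial_x+(x^2+g)\partial_y$ with $g\in(x^3,xy,y^2)$. Your proposal stops at $(y+g_1)\partial_x+g_2\partial_y$ and never uses that the $x^2$-coefficient is nonzero. With weights $\deg x=2$, $\deg y=3$, the paper then shows:
\begin{itemize}
\item $\delta(I_n)\subseteq I_{n+1}$ and $\alpha\in I_5$;
\item $\tau=\delta^4(x)-\alpha x=2(x^3+y^2)+h$ with $h\in I_7$ is an invariant;
\item $k[[x^5,y^5,\tau]]\cong k[[X,Y,Z]]/(X^2+Y^3+Z^5)$ is normal with the same fraction field, so it is the whole invariant ring.
\end{itemize}

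\textbf{A possible repair of your approach.} If you want to keep the classification route, you can close the gap as follows. Every height-one prime $P$ of $\widehat{\cO}_{S/\cF,\tilde q}$ lies under a principal prime $(f)$ of the UFD $k[[x,y]]$, and $\mathrm{div}(f^5)$ equals $P$ or $5P$. Hence the \emph{whole} local class group is $5$-torsion. By Lipman, this group is the discriminant group of the dual graph. For a $(2,3,5)$ star with central curve $-b$ and arms $2/1$, $3/e_2$, $5/e_3$, its order is $30b-15-10e_2-6e_3\equiv -e_3\not\equiv 0 \pmod 5$. So the class group is trivial and $\tilde q$ is a rational double point of type $E_8$. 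Non-$F$-regularity, or Matsumoto's list, then leaves only $E_8^0$. This route outsources the work to Hara and Lipman, whereas the paper's computation is self-contained and also produces the explicit equation.
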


Before the proof, we need the following lemma. 

\begin{lem} \label{expression}
Let $S=\mathrm{Spec}\;k[[x,y]]$ and let $\cF$ be a $\frac{4}{5}$-klt 1-foliation on $S$ generated by $\delta \in \fm\Der{k[[x,y]]}$. If $\cF$ is not lc, then, after a suitable change of coordinates, $\delta$ is written as
$$\delta=y\dd{x}+(x^2+g)\dd{y},$$
with $g \in (x^3,xy,y^2)$.
\end{lem}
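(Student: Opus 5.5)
The plan is to pin down the linear part of $\delta$ first, then the quadratic part, using two tools: Proposition \ref{nilpotence} on the linear part, and an explicit blow-up (or weighted blow-up) computed with the transformation rules \eqref{pullback formula}. The second tool shows that any other shape violates the $\frac{4}{5}$-klt condition.

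\textbf{Step 1: the linear part.} Since $\cF$ is not lc and $\delta\in\fm\Der{k[[x,y]]}$, Proposition \ref{nilpotence}(2) says $\delta|_{\fm/\fm^2}$ is nilpotent. It is nonzero, since otherwise $\delta\in\fm^2\Der{k[[x,y]]}$ and Proposition \ref{m2} would contradict $\frac{2}{3}$-klt. Here we use Lemma \ref{t'-klt}: $S$ is regular, hence klt, so $\frac{4}{5}$-klt implies $\frac{2}{3}$-klt. A nonzero nilpotent $2\times 2$ matrix has Jordan form $\begin{pmatrix}0&1\\0&0\end{pmatrix}$. After a linear change of coordinates we can therefore write
$$\delta=(y+a)\partial_x+b\,\partial_y,\qquad a,b\in\fm^2 .$$

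\textbf{Step 2: normalize the $\partial_x$-coefficient.} Replace $\delta$ by $u\delta$ for a unit $u$, and change coordinates $y\mapsto y+a$, which is allowed since $y+a$ is a parameter. Then $\delta(x)=y$ and $\delta=y\partial_x+h\partial_y$ with $h\in\fm^2$. Write $h=\alpha x^2+\beta xy+\gamma y^2+(\text{higher order})$. Every term other than $\alpha x^2$ already lies in $(x^3,xy,y^2)$, so it remains to show $\alpha\neq0$ and then rescale. With $x\mapsto cx$, $y\mapsto c'y$ and $\delta$ multiplied by a constant, we can take $\alpha=1$.

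\textbf{Step 3 (main obstacle): $\alpha=0$ is not $\frac{4}{5}$-klt.} Suppose $\alpha=0$, so $h\in(x^3,xy,y^2)$. Use the weighted blow-up with weights $\mathrm{wt}(x)=1$, $\mathrm{wt}(y)=2$, realized as a toric resolution, or by two successive ordinary blow-ups tracking the second exceptional divisor $E_2$. Under these weights $y\partial_x$ has weighted degree $1$, while $h\partial_y$ has weighted degree at least $\min(3,3,4)-2=1$. So $\delta$ is weighted-homogeneous of degree $\ge1$.

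I expect the weighted analogue of Lemma \ref{blow-up} to give $a(E;\cF)\le -1-\epE$, and the standard toric formula gives $a(E;S)=\mathrm{wt}(x)+\mathrm{wt}(y)-1=2$. Then
$$\tfrac45 a(E;\cF)+\tfrac15 a(E;S)\le -\tfrac45-\tfrac45\epE+\tfrac25=-\tfrac25-\tfrac45\epE<-\tfrac45\epE-\tfrac15,$$
which is a contradiction. The delicate points are two. First, the lemma has to be verified in the weighted chart: the invariant case has to be separated from the non-invariant one exactly as in Lemma \ref{blow-up}. Second, if the toric model is awkward, I would instead compute concretely on the iterated blow-up. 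After the first blow-up $x=x,\ y=xt$, the pulled-back derivation is $x(t\partial_x+\cdots)$, with a nilpotent-type singularity at $t=0$. A second blow-up at that point then produces $E_2$ with $a(E_2;S)=2$, and the order of vanishing of the pullback of $\delta$ along $E_2$ can be read off directly.
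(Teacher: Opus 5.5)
Your Steps 1 and 2 match the paper's argument. In Step 3 you choose the right divisor: the weight-$(1,2)$ valuation is exactly the paper's $E'$, obtained by blowing up the origin and then the point $x_1=t=0$ of $E_1$, and $a(E';S)=2$. The gap is that the bound $a(E';\cF)\le -1-\epsilon_{\cF}(E')$ is only ``expected''. That bound is the whole content of the lemma beyond Steps 1--2.

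Your concrete fallback also rests on a wrong computation. With $h\in(x^3,xy,y^2)$, the rules \eqref{pullback formula} give
\[
\pi_1^*\delta=x_1t\,\partial_{x_1}+\Bigl(-t^2+\frac{h(x_1,x_1t)}{x_1}\Bigr)\partial_t,\qquad \frac{h(x_1,x_1t)}{x_1}\in(x_1^2,x_1t,x_1t^2).
\]
This is not of the form $x_1(t\partial_{x_1}+\cdots)$ with a regular bracket, because the $\partial_t$-coefficient is $\equiv -t^2\not\equiv 0 \pmod{x_1}$. So the bookkeeping your sketch suggests, namely $a(E_1;\cF)=-1$ and a saturated generator with nonzero linear part at $q'$, is not what actually happens.

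The correct computation is the one the paper does, and it closes your argument:
\begin{enumerate}
\item Since $\pi_1^*\delta$ is not divisible by $x_1$, it generates $\pi_1^*\cF$. Hence $K_{\pi_1^*\cF}=\pi_1^*K_{\cF}$ and $a(E';\cF)=a(E';\pi_1^*\cF)$.
\item At $q'=\{x_1=t=0\}$, both coefficients of $\pi_1^*\delta$ lie in $(x_1,t)^2$. So Lemma \ref{blow-up} with $d=2$ gives $a(E';\pi_1^*\cF)\le -1-\epsilon_{\cF}(E')$, and your inequality finishes the proof.
\end{enumerate}
Alternatively, you can complete your weighted route directly. In the chart $x=x_1$, $y=x_1^2y_1$ one finds
\[
\pi^*\delta=x_1\bigl[x_1y_1\partial_{x_1}+\bigl(-2y_1^2+h(x_1,x_1^2y_1)/x_1^3\bigr)\partial_{y_1}\bigr].
\]
The bracket is regular and not divisible by $x_1$: modulo $x_1$, its $\partial_{y_1}$-coefficient is $-2y_1^2$ plus terms of degree $\le 1$ in $y_1$. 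Hence $E$ is invariant and $a(E;\cF)=-1$. Either way, you must actually carry out this computation; as written, the decisive inequality is assumed rather than proved.
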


\begin{proof}
Since $\cF$ is not lc, $\delta|_{\fm/\fm^2}$ is nilpotent by Proposition \ref{nilpotence}. If $\delta|_{\fm/\fm^2}=0$, then we have $\delta\in \fm^2\Der{k[[x,y]]}$ which contradicts the assumption that $\cF$ is $\frac{4}{5}$-klt by Lemma \ref{t'-klt} and Proposition \ref{m2}. Thus $\delta|_{\fm/\fm^2} \neq 0$. Since $\delta|_{\fm/\fm^2}$ is non-zero but nilpotent, it is represented by the Jordan block
$$\begin{pmatrix}
    0 & 1 \\
    0 & 0
\end{pmatrix}$$
for a suitable basis. Thus, changing coordinates if necessary, we may assume that 
$$\delta=(y+\psi)\dd{x}+\phi\dd{y},$$ with $\phi,\psi \in \fm^2$. 
Also, by the further coordinate change $y'=y+\psi$, we may assume that $\psi=0$. Write $\phi=cx^2+g(x,y)$, with $c \in k$ and $g(x,y) \in (x^3,xy,y^2)$.

Let $\pi_1 \colon S_1 \to S$ be the blow-up at the origin. One chart of the blow-up is given by
$$k[[x,y]] \to k[[x_1,y_1]][t]/(y_1-x_1t), \;\; (x,y) \mapsto (x_1,x_1t).$$
By the transformation rules \eqref{pullback formula}, $\pi_1^*\delta$ is calculated as follows:
\begin{align*}
\pi_1^*\delta&=x_1t\cdot\left(\dd{x_1}-\frac{t}{x_1}\dd{t}\right)+\left(cx_1^2+g(x_1,x_1t)\right)\cdot\frac{1}{x_1}\dd{t} \\
&=x_1t\dd{x_1}+\left(cx_1-t^2+\frac{g(x_1,x_1t)}{x_1}\right)\dd{t}.
\end{align*}
Since $g \in \fm^2$, we see that $g(x_1,x_1t)/x_1$ does not have a $t^2$ term. Hence the coefficient of $\partial_t$ in $\pi_1^*\delta$ is not divisible by $x_1$. Thus the pullback foliation $\pi_1^*\cF$ on this chart is generated by $\pi_1^*\delta$. In particular, we obtain
\begin{equation} \label{canonical div5}
    K_{\pi_1^*\cF}=\pi_1^*K_{\cF}.
\end{equation}

Let $q' \in S_1$ be the closed point corresponding to the maximal ideal $\fm'=(x_1,y_1)$. If $c=0$, then we have $\pi_1^*\delta \in \fm'^2 \Der{\cO_{S_1,q'}}$. Let $\pi_2 \colon S_2 \to S_1$ be the blow-up at $q'$ and $E'$ the unique exceptional divisor. Since $q' \in E$, we have $a(E';S)=a(E;S)+1=2$. Using Lemma \ref{blow-up} and equation \eqref{canonical div5}, we have $a(E';\cF)=a(E';\pi_1^*\cF) \leq -1-\epsilon_{\cF}(E')$. Thus
$$\frac{4}{5}a(E';\cF)+\frac{1}{5}a(E';S) \leq \frac{4}{5}(-1-\epsilon_{\cF}(E'))+\frac{2}{5} = -\frac{4}{5}\epsilon_{\cF}(E')-\frac{2}{5} \leq -\frac{4}{5}\epsilon_{\cF}(E')-\frac{1}{5}.$$
This contradicts the assumption that $\cF$ is $\frac{4}{5}$-klt. Thus we have $c \neq 0$.

By the coordinate change $x'=cx$ and $y'=cy$, we obtain the desired expression for $\delta$.
\end{proof}

Now we prove Theorem \ref{4/5-klt}.
The key idea of the proof in this case is \cite[Lemma 3.6]{Matsu2}.

\begin{proof}[Proof of Theorem \ref{4/5-klt}]
We may assume that $S=\text{Spec}\;k[[x,y]]$ and $\cF$ is generated by $\delta\in\Der{k[[x,y]]}$ written as in Lemma \ref{expression}. Since $\delta$ is not multiplicative, we can write $\delta^5=\alpha\delta$ with $\alpha \in \fm$.

Now we consider $k[[x,y]]$ as a weighted ring with $\deg(x)=2$ and $\deg(y)=3$ and let $I_n$ be the ideal generated by monomials of degree $\geq n$. Then, we see that $g \in I_5$. If $x^iy^j$ is a monomial in $I_n$, then we have
$$\delta(x^iy^j)=ix^{i-1}y^{j+1}+jx^{i+2}y^{j-1}+jx^iy^{j-1}g.$$
Since $\deg(x^{i-1}y^{j+1})=2(i-1)+3(j+1)\geq n+1,\;\deg(x^{i+2}y^{j-1})=2(i+2)+3(j-1)\geq n+1$, and $\deg(x^iy^{j-1}g) \geq 2i+3(j-1)+5\geq n+2$, it follows that
\begin{equation} \label{deltaIn}
    \delta(I_n) \subseteq I_{n+1}.
\end{equation}

Next we will show that $\alpha \in I_5$.
Write $\alpha=a_2x+a_3y+a_4x^2+f$ with $a_2,a_3,a_4 \in k$ and $f \in I_5$.
By Lemma \ref{alpha}, we have
$$\delta(\alpha)=a_2y+a_3x^2+2a_4xy+a_3g+\delta(f)=0.$$
From \eqref{deltaIn}, it follows that $\delta(f) \in I_6$. Hence,  
considering the terms of degree $3,4$ and $5$, we get $a_2=a_3=a_4=0$. Thus we have $\alpha \in I_5$.

We compute $\delta^4(x)$ as follows: 
$$\delta^4(x)=\delta^2(\phi)=\delta(2xy+\delta(g))=2y^2+2x^3+2xg+\delta^2(g).$$
Since $g,\alpha \in I_5$, we have $\deg(2xg)=\deg(g)+2 \geq 7$ and $\deg(\alpha x)=\deg(\alpha)+2 \geq 7$. Also, using \eqref{deltaIn}, we have $\delta^2(g) \in I_7$. Thus we obtain
\begin{equation} \label{h}
    h\coloneqq\delta^4(x)-2(y^2+x^3)-\alpha x=2xg+\delta^2(g)-\alpha x \in I_7.
\end{equation}

Let $\tau=\delta^4(x)-\alpha x$. Then we have
\begin{equation} \label{tauh}
\tau=2(y^2+x^3)+h.
\end{equation}
From Lemma \ref{alpha}, it follows that
$$\delta(\tau)=\delta(\delta^4(x)-\alpha x)=\delta^5(x)-\alpha \delta(x) -\delta(\alpha)x=0.$$
Hence we have $\tau \in k[[x,y]]^{\delta}$. We consider the homomorphism
$$\theta:k[[u,v,t]] \to k[[x,y]]^{\delta};\;\;(u,v,t) \mapsto (x^5,y^5,\tau).$$
Using equation \eqref{(p)} and \eqref{tauh}, we obtain
$$\theta(t^5-2(v^2+u^3)-h^{(5)})=\tau^5-2x^{10}-2y^{15}-h^5=\tau^5-(2(y^2+x^3)+h)^5=0.$$
Thus it follows that $t^5-2(v^2+u^3)-h^{(5)} \in \text{Ker}\;\theta$.
Moreover, since $I_7=(x^4,x^2y,xy^2,y^3)$, \eqref{h} implies 
\begin{equation} \label{h5}
    h^{(5)} \in (u^4,u^2v,uv^2,v^3)_{k[[u,v]]}.
\end{equation}
Thus $t^5-2(v^2+u^3)-h^{(5)}$ is irreducible.

Suppose that $\text{Ker}\;\theta \neq (t^5-2(u^3+v^2)-h^{(5)})$. Then it follows that
\begin{equation} \label{dim}
    \text{dim}\;k[[u,v,t]]/\text{Ker}\;\theta \leq 1.
\end{equation}
On the other hand, we have natural inclusions
$$k[[u,v]] \subseteq k[[u,v,t]]/\text{Ker}\;\theta \subseteq k[[x,y]]^{\delta}.$$
Therefore \eqref{dim} contradicts the fact that $\text{Spec}\;k[[u,v]]$ is homeomorphic to $\text{Spec}\;k[[x,y]]^{\delta}$.
From the above, we obtain the injective homomorphism
$$\bar{\theta}:B\coloneqq k[[u,v,t]]/(t^5-2(u^3+v^2)-h^{(5)}) \to k[[x,y]]^{\delta}$$
induced by $\theta$. In what follows, we regard $B$ as the subring of $k[[x,y]]^{\delta}$ via $\bar{\theta}$. 

Let $\mathfrak{n}=(u,v)$ be the maximal ideal of $k[[u,v]]$. Using \eqref{h5}, we can write $$h^{(5)}=c'u^2v+u^3g_1(u,v)+v^2g_2(u,v),$$
with $c'\in k$ and $g_1(u,v),g_2(u,v) \in \mathfrak{n}$.
By the coordinate change $v'=v+c'u^2$, we can calculate as follows:
$$2v^2+c'u^2v=2(v'-c'u^2)^2-c'u^2(v'-c'u^2)=2v'^2-4c'u^2v'-c'u^2v'+c'^2u^4=2v'^2+c'^2u^4.$$
Thus the defining polynomial of $B$ is given by
$$t^5-2(v'^2+u^3)+u^3g'_1(u,v')+v'^2g'_2(u,v'),$$
with $g'_1(u,v'),g'_2(u,v') \in \mathfrak{n}$.
Moreover, by the coordinate change
$$X=v'(-2+g'_2(u,v'))^{1/2}, Y=u(-2+g'_1(u,v'))^{1/3}, Z=t,$$
we obtain
$$B \cong k[[X,Y,Z]]/(X^2+Y^3+Z^5).$$
Since $B$ is normal and $Q(B)=Q(k[[x,y]]^{\delta})$, we have $B=k[[x,y]]^\delta$ and it is a rational double point of type $E_8^0$.
\end{proof}

Combining Theorem \ref{4/5-klt} with Theorem \ref{multi}, we obtain the following corollary.

\begin{cor} \label{p=5}
If $\cF$ is $\frac{4}{5}$-klt 1-foliation on $S$, then each singular point of $S/\cF$ is either
\begin{enumerate}
\item a rational double point of type $A_4$ or $E_8^0$, or
\item a toric singularity of type $\frac{1}{5}(1,1)$ or $\frac{1}{5}(1,2)$.
\end{enumerate}
\end{cor}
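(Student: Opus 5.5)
The plan is to work locally at a closed point $q \in S$ and split according to the type of a local generator $\delta$ of $\cF$. The question is local, so I may assume $S=\mathrm{Spec}(\cO_{S,q})$ and pass to the completion $k[[x,y]]$. First I treat the case where $\cF$ is not invertible or the quotient is regular at $\tilde{q}$; those points contribute no singularity. Note that $\cF$ has rank one here: if $\cF=T_S$, the quotient is the Frobenius twist, which is regular. A rank-one reflexive sheaf on a regular surface is invertible, so $\cF$ is generated near $q$ by a single $p$-closed derivation $\delta$.

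If $\delta \notin \fm\Der{\cO_{S,q}}$, then $\delta$ is nonvanishing at $q$. By the standard fact (Jacobson/Seshadri) that the quotient by a smooth $1$-foliation is regular, $\tilde{q}$ is a smooth point. So assume $\delta \in \fm\Der{\cO_{S,q}}$, and split into the lc and non-lc cases.

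If $\cF$ is lc at $q$, Proposition \ref{nilpotence} shows that $\delta|_{\fm/\fm^2}$ is not nilpotent. I would argue that $\delta$ is then multiplicative. Write $\delta^5=\alpha\delta$. Since $\delta|_{\fm/\fm^2}$ is not nilpotent and $\delta^5|_{\fm/\fm^2}=\alpha(q)\,\delta|_{\fm/\fm^2}$, we get $\alpha(q)\neq0$, so $\alpha$ is a unit; this linear-algebra check is the only real verification needed. Theorem \ref{multi} then says that $\tilde{q}$ is a toric singularity of type $\frac{1}{5}(1,\lambda)$ with $\lambda\in\{1,2,3,4\}$.

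It remains to use the $\frac{4}{5}$-klt hypothesis to restrict $\lambda$. The types $\frac{1}{5}(1,\lambda)$ and $\frac{1}{5}(1,\lambda')$ are isomorphic when $\lambda\lambda'\equiv1$ or $\lambda'\equiv\lambda$ after swapping coordinates. This gives $\frac{1}{5}(1,2)\cong\frac{1}{5}(1,3)$. The type $\frac{1}{5}(1,4)$ is the $A_4$ rational double point, and $\frac{1}{5}(1,1)$ remains. So the lc case yields exactly $A_4$, $\frac{1}{5}(1,1)$ or $\frac{1}{5}(1,2)$. All of these are klt, which is consistent with Theorem \ref{(p-1)/p}.

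If $\cF$ is not lc at $q$, Theorem \ref{4/5-klt} applies directly, since $\cF$ is invertible and $\frac{4}{5}$-klt, and it gives $E_8^0$. Collecting the cases proves the corollary.

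The main obstacle I expect is the bookkeeping of isomorphisms among the cyclic quotient types, in particular identifying $\frac{1}{5}(1,4)$ with $A_4$ and $\frac{1}{5}(1,3)$ with $\frac{1}{5}(1,2)$. A secondary point is justifying that non-nilpotence of $\delta|_{\fm/\fm^2}$ forces $\delta$ to be multiplicative.
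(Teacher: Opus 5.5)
Your proposal is correct and follows the paper's proof. Multiplicative generators give $\frac{1}{5}(1,\lambda)$ by Theorem \ref{multi}, and the same bookkeeping reduces these to $A_4$, $\frac{1}{5}(1,1)$ and $\frac{1}{5}(1,2)$; the remaining case gives $E_8^0$ by Theorem \ref{4/5-klt}. Your lc versus non-lc split, the check $L^5=\alpha(q)L$ on $\fm/\fm^2$, and the separate treatment of $\delta\notin\fm\Der{\cO_{S,q}}$ make explicit the step the paper leaves implicit in passing from ``not multiplicative'' to ``not lc''. Also, no $\frac{4}{5}$-klt restriction on $\lambda$ is actually needed, since all four values of $\lambda$ already land in the list.
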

\begin{proof}
We may assume that $S=\mathrm{Spec}\;k[[x,y]]$ and $\cF$ is generated by $\delta \in \Der{k[[x,y]]}$. If $\delta$ is multiplicative, then $k[[x,y]]^{\delta}$ is a toric singularity of type $\frac{1}{5}(1,\lambda)$ for some $\lambda\in\mathbb{F}_5^\times$, by Theorem \ref{multi}. Here, a toric singularity of type $\frac{1}{5}(1,4)$ given by
$$k[[x^5,y^5,xy,x^2y^2,x^3y^3,x^4y^4]]=k[[x^5,y^5,xy]] \cong k[[X,Y,Z]]/(Z^5-XY),$$
corresponds to a rational double point of type $A_4$. Also, by permuting the coordinates and multiplying the weights by $2$ modulo $5$, we have $\frac{1}{5}(1,3) \cong \frac{1}{5}(3,1) \cong \frac{1}{5}(1,2)$.

If $\delta$ is not multiplicative, then $k[[x,y]]^{\delta}$ is a rational double point of type $E_8^0$ by Theorem \ref{4/5-klt}. Therefore the classification is complete.
\end{proof}

\bibliographystyle{alpha}
\bibliography{reference}

\end{document}